\documentclass{amsart}

\usepackage[dvips]{graphics}
\usepackage{amscd}

\usepackage{color}
\usepackage{ifthen}
\usepackage{soul}
\usepackage{latexsym}
\usepackage{amsthm,amssymb}
\usepackage{amsbsy,amsfonts,amsmath}
\usepackage{txfonts}

\usepackage{ulem}

\numberwithin{equation}{section}

\newtheorem{theorem}{Theorem}[section]

\newtheorem{lemma}[theorem]{Lemma}

\theoremstyle{definition}
\newtheorem{definition}[theorem]{Definition}

\theoremstyle{remark}
\newtheorem{remark}[theorem]{Remark}

\newcommand{\rmSpan}
{\mathrm{Span}}
\newcommand{\rrmSpan}[2]{\rmSpan_{#1}(#2)}

\newcommand{\al}{\alpha}
\newcommand{\fkJ}{J}

\newcommand{\bN}{{\mathbb{N}}}
\newcommand{\bZ}{{\mathbb{Z}}}
\newcommand{\bZgeqo}{\bZ_{\geq 0}}
\newcommand{\bQ}{{\mathbb{Q}}}
\newcommand{\bR}{{\mathbb{R}}}
\newcommand{\bC}{{\mathbb{C}}}
\newcommand{\bCt}{\bC^\times}

\newcommand{\bA}{{\mathbb{A}}}

\newcommand{\lb}{\llbracket}
\newcommand{\rb}{\rrbracket}

\newcommand{\blb}{{\bar{\llbracket}}}
\newcommand{\brb}{{\bar{\rrbracket}}}

\newcommand{\ha}{a}
\newcommand{\bha}{{\bar a}}

\newcommand{\EQone}{({\rm{EQ}}1)}
\newcommand{\EQtwo}{({\rm{EQ}}2)}
\newcommand{\EQthree}{({\rm{EQ}}3)}
\newcommand{\EQfour}{({\rm{EQ}}4)}
\newcommand{\EQfive}{({\rm{EQ}}5)}
\newcommand{\EQsix}{({\rm{EQ}}6)}
\newcommand{\EQseven}{({\rm{EQ}}7)}
\newcommand{\EQeight}{({\rm{EQ}}8)}
\newcommand{\EQnine}{({\rm{EQ}}9)}
\newcommand{\EQten}{({\rm{EQ}}10)}

\newcommand{\FQone}{({\rm{FQ}}1)}
\newcommand{\FQtwo}{({\rm{FQ}}2)}
\newcommand{\FQthree}{({\rm{FQ}}3)}
\newcommand{\FQfour}{({\rm{FQ}}4)}
\newcommand{\FQfive}{({\rm{FQ}}5)}
\newcommand{\FQsix}{({\rm{FQ}}6)}
\newcommand{\FQseven}{({\rm{FQ}}7)}
\newcommand{\FQeight}{({\rm{FQ}}8)}
\newcommand{\FQnine}{({\rm{FQ}}9)}
\newcommand{\FQten}{({\rm{FQ}}10)}

\newcommand{\mcT}{{\mathcal{T}}}

\newcommand{\trU}{U}
\newcommand{\trUqQ}{\trU_{q,Q}}

\newcommand{\trUp}{\trU^+}

\newcommand{\trUm}{\trU^-}
\newcommand{\trUo}{\trU^0}

\newcommand{\trUpgeq}{\trU^{+,\geq}}
\newcommand{\trUmleq}{\trU^{-,\leq}}
\newcommand{\trUogeq}{\trU^{0,\geq}}
\newcommand{\trUoleq}{\trU^{0,\leq}}

\newcommand{\trK}{K}
\newcommand{\trL}{L}

\newcommand{\trE}{E}
\newcommand{\trF}{F}

\newcommand{\tE}{{\tilde{\trE}}}
\newcommand{\tF}{{\tilde{\trF}}}

\newcommand{\dF}{{\dot{\trF}}}

\newcommand{\dtF}{{\dot{\tF}}}

\newcommand{\mcM}{{\mathcal{M}}}
\newcommand{\mcMtrU}{{\mcM\trU}}

\newcommand{\rmid}{{\mathrm{id}}}

\newcommand{\udK}[1]{{\underline{\trK}}_{\,#1}}
\newcommand{\udL}[1]{{\underline{\trL}}_{\,#1}}

\newcommand{\dudK}[1]{{\underline{{\dot{\trK}}}}_{\,#1}}
\newcommand{\dudL}[1]{{\underline{{\dot{\trL}}}}_{\,#1}}

\newcommand{\udUo}{{\underline{\trU}}^0}
\newcommand{\udUogeq}{{\underline{\trU}}^{0,\geq}}
\newcommand{\udUoleq}{{\underline{\trU}}^{0,\leq}}

\newcommand{\udzeta}{{\underline{\zeta}}}

\newcommand{\uudK}[1]{\uuline{\trK}_{\,\raisebox{4pt}{\scriptsize #1}}}

\newcommand{\uudL}[1]{\uuline{\trL}_{\,\raisebox{4pt}{\scriptsize #1}}}

\newcommand{\dudUo}{{\underline{{\dot{\trU}}}}^0}
\newcommand{\dudUogeq}{{\underline{{\dot{\trU}}}}^{0,\geq}}
\newcommand{\dudUoleq}{{\underline{{\dot{\trU}}}}^{0,\leq}}

\newcommand{\udI}{{\underline{\mathcal{I}}}}

\newcommand{\duudK}[1]{\uuline{{\dot{\trK}}}_{\raisebox{4pt}{\,\scriptsize #1}}}
\newcommand{\duudL}[1]{\uuline{{\dot{\trL}}}_{\raisebox{4pt}{\,\scriptsize #1}}}

\newcommand{\dudmcRo}{{\underline{{\mathcal{R}}}}^0}

\newcommand{\trkappa}{N}
\newcommand{\tromega}{\omega}

\newcommand{\trRuvdotdelalzero}{{\mathcal{R}}_{\bullet\delta+\al_0}(u,v)}
\newcommand{\trRuvdotdelalone}{{\mathcal{R}}_{\bullet\delta+\al_1}(u,v)}
\newcommand{\trRuvdotdel}{{\mathcal{R}}_{\bullet\delta}(u,v)}

\newcommand{\trRuvdotdelali}{{\mathcal{R}}_{\bullet\delta+\al_i}(u,v)}

\newcommand{\trmcU}{{\mathcal{U}}}
\newcommand{\trmcUo}{\trmcU^0}
\newcommand{\trmcUp}{\trmcU^+}
\newcommand{\trmcUm}{\trmcU^-}
\newcommand{\trmcUpgeq}{\trmcU^{+,\geq}}
\newcommand{\trmcUmleq}{\trmcU^{-,\leq}}
\newcommand{\trmcUogeq}{\trmcU^{0,\geq}}
\newcommand{\trmcUoleq}{\trmcU^{0,\leq}}

\newcommand{\trmcJ}{{\mathcal{J}}}
\newcommand{\trmcJo}{\trmcJ^0}
\newcommand{\trmcJp}{\trmcJ^+}
\newcommand{\trmcJm}{\trmcJ^-}
\newcommand{\trmcJpgeq}{\trmcJ^{+,\geq}}
\newcommand{\trmcJmleq}{\trmcJ^{-,\leq}}
\newcommand{\trmcJogeq}{\trmcJ^{0,\geq}}
\newcommand{\trmcJoleq}{\trmcJ^{0,\leq}}

\newcommand{\trmfkU}{{\mathfrak{U}}}
\newcommand{\trmfkUo}{\trmfkU^0}
\newcommand{\trmfkUp}{\trmfkU^+}
\newcommand{\trmfkUm}{\trmfkU^-}
\newcommand{\trmfkUpgeq}{\trmfkU^{+,\geq}}
\newcommand{\trmfkUmleq}{\trmfkU^{-,\leq}}
\newcommand{\trmfkUogeq}{\trmfkU^{0,\geq}}
\newcommand{\trmfkUoleq}{\trmfkU^{0,\leq}}

\newcommand{\trmfkC}{{\mathfrak{C}}}
\newcommand{\trmfkH}{{\mathfrak{H}}}

\newcommand{\trmfkR}{{\mathfrak{R}}}

\newcommand{\hattrmfkU}{{\hat{\trmfkU}}}
\newcommand{\hattrmfkC}{{\hat{\trmfkC}}}

\newcommand{\trp}{{\mathfrak{p}}}

\title{Universal $R$-matrix of double parameter quantum affine algebra $\trUqQ({\hat {sl_2}})$}
\author{Fengchang Li}
\address{School of Mathematical Sciences, University of Science and Technology of China, Jinzhai Road 96, Hefei 
\mbox{\,\,\,\,\,\,\,}  230026, Anhui Province, P.R.China.}
\email{fengchangli@ustc.edu.cn}
\author{Masatake Maruyama}
\address{Graduate School of Science and Engineering,
University of Toyama,
3190 Gofuku, Toyama 930-8555, Japan.}
\email{d26c2002@ems.u-toyama.ac.jp}
\author{Hiroyuki Yamane}
\address{Faculty of Science, Academic Assembly,
University of Toyama,
3190 Gofuku, Toyama 930-8555, Japan.}
\email{hiroyuki@sci.u-toyama.ac.jp}

\begin{document}

\begin{abstract}
We give the explicit formula of the 
universal $R$-matrix of a double parameter 
(or two-parameter, or multi-parameter) quantum affine algebra
of type ${\mathrm{A}}_1^{(1)}$.
For $N$ with $q_{00}q_{01}$ being a primitive $N$-th root of unity,
we introduce its $2N$-dimensional representation
and explicitly calculate the $R$-matrix associated with it via 
the universal $R$-matrix.
\end{abstract}

\maketitle

\section{Introduction}
Solutions of the Yang-Baxter equations have played an essential role in physics and mathematics, such as statistical mechanics, knot theory, conformal field theory, etc (see \cite{YANGGE1994}, \cite{RT91}, \cite{K87} etc).  
A solution of a Yang-Baxter equation is called an {\it{$R$-matrix}}.

Since the introduction of quantum groups by Drinfeld~\cite{D86} and Jimbo~\cite{J86} in the 1980s, the universal $R$-matrices~\cite{D86} have continued to play
a central role in numerous areas of mathematics and physics.
For the finite dimensional representation $\sigma$ of a quantum group $U_q({\mathfrak{g}})$, the associated 
$R$-matrix is given by 
$(\sigma\otimes\sigma)({\mathcal{R}})$, the image of the universal $R$-matrix ${\mathcal{R}}$. However, in general, it is difficult to obtain an explicit closed expression for 
$(\sigma\otimes\sigma)({\mathcal{R}})$.
The well-known alternative method is the Jimbo's intertwiner method \cite{J86}. 

Variations of quantum groups,
such as multi-parameter quantum groups \cite{JNY18, OY91, R90}, quantum superalgebras \cite{ HY10, LYZ23, LSS93, Y94, Y99},  and the Drinfeld doubles of Nichols algebras of diagonal type \cite{AY15, AYY15, BY18, Hec09}, have also been studied. 

We give the explicit formula (see Theorem~\ref{theorem:maintwo}) of the universal $R$-matrix $\trmfkR$ of the double parameter affine quantum group $\trUqQ({\hat {sl_2}})$,
where the $2\times 2$-matrix 
$Q=
\left(\begin{array}{cc} 
q_{00} & q_{01} \\ q_{10} & q_{11} 
\end{array}\right)=
\left(\begin{array}{cc} 
q^2  & a \\ q^{-4}a^{-1} & q^2 
\end{array}\right)$ 
with $a\in\bCt$
(see \eqref{eqn:defQ}). For $a\cdot q^2$ being the primitive $\trkappa$-th root of unity, we introduce the  representation $\rho:\trUqQ({\hat {sl_2}})\to {\mathrm{M}}_{2\trkappa}(\bC)$ and clearly decide
${\mathcal{R}}(z)=(\rho\otimes\rho)(\trmfkR)$ (see Theorem~\ref{theorem:MainVrepRMat}),
where if $N\geq 2$, the Jimbo's intertwiner method cannot be applied.
The restriction of 
$\rho$ to $U_q(sl_2)$ is isomorphic to the direct sum of $N$-copies of the irreducible two-dimensional $U_q(sl_2)$-representation
(or the vector representation of $U_q(sl_2)$).
If $N=1$, $\trUqQ({\hat {sl_2}})$ is the usual $\trU_q({\hat {sl_2}})$,
$\rho$ is the  vector representation ($2$-dimensional), and
${\mathcal{R}}(z)$ is the $R$-matrix ($4\times 4$-matrix)
of the six‑vertex model. 
More precisely, as shown by \cite{LSS93},
this ${\mathcal{R}}(z)$ (for $N=1$) is obtained from 
the standard six-vertex model $R$-matrix by  multiplication with a 
complex regular map ${\mathcal{A}}(z)=\exp\left(-\sum\limits_{k=1}^\infty
{\frac {z^k(q^k-q^{-k})} {k(q^k+q^{-k})}}\right)$.
If $N=2$, the matrix ${\mathcal{R}}(z)$ 
($16\times 16$-matrix)
depends on two complex regular maps
${\mathcal{B}}(z)$
and ${\mathcal{C}}(z)$ in a rather intricate way. Nevertheless, explicit computation (performed using Mathematica~14.3~\cite{Wolf14-3}) shows that these functions can be treated as completely independent constants (see Theorem~\ref{theorem:YBERtwoz}).

In \cite{LSS93}, also in \cite{D98}, the explicit formula of the universal $R$-matrix of $U_q(\hat{sl_2})$ is obtained strictly applying the Drinfeld quantum double method.
In addition, in \cite{LSS93}, the image of the vector representation of $U_q(\hat{sl_2})$ is obtained (it is the same as ${\mathcal{R}}(z)$ for $N=1$). In this paper, as mentioned in the previous paragraph, we establish for $\trUqQ({\hat {sl_2}})$ the counterparts of these two results.  
Except for Section~\ref{section:hadic},
we avoid a Drinfeld-type $h$-adic topological setting for the description of (the Cartan part of) the universal $R$-matrix. 
Instead, we use an argument where $q$ is a root of unity.

This paper is organized as follows.
In Section~2, we introduce the definition of $\trUqQ({\hat {sl_2}})$.
In Section~3, we analyze the
positive part $\trUp$ of $\trUqQ({\hat {sl_2}})$.
We prove the commutativity of 
the imaginary root vectors $\trE_{n\delta}$ in a direct way. This commutativity was originally proved in \cite{D93} for $U_q(\hat{sl_2})$. Remark~\ref{remark:CndEQseven} says that  $\trE_{m\delta}\trE_{n\delta}=\trE_{n\delta}\trE_{m\delta}$
is mainly due to $(q^{2(m+n)}-1)(q^{2(m+n-2)}+1)\ne 0$. On the other hand, \cite{D93} used a theological way (due to the non-degeneracy of the Drinfeld pairing) to show the same equations. We obtain the modification $\tE_{n\delta}$ of $\trE_{n\delta}$ in a well-known Drinfeld argument involving $\exp$ and $\log$ maps, see \cite{D87,B94}. 
In Section~4, we introduce the Lusztig isomorphism of $\trUqQ({\hat {sl_2}})$,
and obtain $q$-commutator relations
between root vectors of the positive and negative parts.
In Section~5, we give an explicit formula for the universal $R$-matrix. 
In Section~6,
we introduce the vector representation of $\trUqQ({\hat {sl_2}})$,
and obtain the explicit formula of the image of its universal $R$-matrix.
In Section~\ref{section:DiReshTwist},
we explain the Drinfeld-Reshetikhin twisting.
In Section~\ref{section:hadic},
inspired by \cite{GG25}, we introduce 
the topological
$\bC[[h]]$
double parameter 
affine quantum group 
$U^{{\mathfrak{h}}}_{\hbar,P}({\hat {sl_2}})$
and decide its universal $R$-matrix. Regarding $U^{{\mathfrak{h}}}_{\hbar,P}({\hat {sl_2}})$,
it is somewhat surprising that the double-parameter version $P$ (see \eqref{eqn:Pmatrix}) of the Cartan matrix of type $A_1^{(1)}$ is not degenerate.

In the future, we might obtain results similar to those of this paper for the other quantum affine (super) algebras, including the double parameter version of $U_q(\text{A}(0,2)^{(4)})$ whose prominent study was recently achieved by \cite{Li26}. In this case, the approach to the universal $R$-matrix will be very different, although $U_q(\text{A}(0,2)^{(4)})$ shares many quite similar properties with $U_q(\hat{sl_2})$, e.g. their Cartan matrices and root systems are the same. The main reason is,  the estimation of the coproduct of imaginary root vectors, see Section 3 of \cite{Li26}, is quite different, because of the existence of odd imaginary roots and the non-commutativity of certain imaginary root vectors. 

\section{Definition of $\trUqQ({\hat {sl_2}})$}

We call an element similar to the one of \cite[Proposition~1.7.1]{Y94}
the {\it{universal $R$-matirx}}.

Let $\fkJ_{x,y}:=\{z\in\bZ\,|\,x\leq z\leq y\}$ for $x$, $y\in\bR\cup\{\pm\infty\}$.
For $n\in\bZgeqo$ and $x\in\bC$, let 
$(n)_x:=\sum_{m=0}^{n-1}x^m$
and $[n]_x:=x^{-(n-1)}(n)_{x^2} $.
Let $\bCt=\bC\setminus\{0\}$ . We use the following expressions and terminology in  
{\rm{(Df1)}}-{\rm{(Df3)}} below.
\newline\newline
{\rm{(Df1)}}
Let $A$ be a commutative ring.
Let $B$ be an associative $A$-algebra (with $1$).
Let $X_1,\ldots,X_n$ 
be $A$-subalgebras (with $1$)
of $B$.
We say that
{\it{$X_1\otimes_A\cdots\otimes_AX_n$ can be naturally identified with $B$ as an $A$-module}}, if
the $A$-module homomorphism
$f:X_1\otimes_A\cdots\otimes_AX_n
\to B$ defined by 
$f(x_1\otimes_A\cdots\otimes_Ax_n)
=x_1\cdots x_n$
is bijective.
Let $C$ be an $A$-subalgebra 
(with $1$)
of $B$. Let $Y$ and $Z$
be two-sided ideal of
$B$ and $C$, respectively.
Assume $Z\subset Y$.
Let ${\tilde C}
=\{c+Y|c\in C\}$
$(\subset B/Y)$.
We say that 
{\it{${\tilde C}$
can be naturally identified with $C/Z$ as an $A$-algebra}} if
the $A$-module homomorphism
$g:C/Z\to B/Y$ defined by 
$g(c+Z)=c+Y$ $(c\in C)$
is injective.
\newline
{\rm{(Df2)}}
Let $V^+$ and $V^-$ be a finite-dimensional $\bC$-linear space
with $\dim V^+=\dim V^-$.
Let $l=\dim V^+$
$(\in\bN)$.
Let $(\,,\,):V^+
\times V^-
\to\bC$ be 
a non-degenerate bilinear map.
There exists a
unique $C\in V^+\otimes V^-$
such that 
there exist
$e^\pm_t\in V^\pm$
$(t\in\fkJ_{1,l})$
with $(e^+_t,e^-_z)=\delta_{tz}$
and $C=\sum_{t=1}^le^+_t\otimes e^-_t$.
We call $C$ the
{\it{canonical element of $V^+\otimes V^-$
with respect to $(\,,\,)$.}}
\newline
{\rm{(Df3)}}
Let $V$ be a $\bC$-linear space.
For $M=\sum_{t=1}^rm_t\otimes m^\prime_t\in{\mathrm{End}}_\bC(V\otimes V)$,
let $M_{12}=\sum_{t=1}^rm_t\otimes m^\prime_t\otimes\rmid_V$,
$M_{13}=\sum_{t=1}^rm_t\otimes\rmid_V\otimes m^\prime_t$,
$M_{23}=\sum_{t=1}^r\rmid_V\otimes m_t\otimes m^\prime_t$
$\in{\mathrm{End}}_\bC(V\otimes V\otimes V)$,
and let $M_{21}=\sum_{t=1}^rm^\prime_t\otimes m_t\in{\mathrm{End}}_\bC(V\otimes V)$.
\newline\par
Unless otherwise stated, the terms {\it{algebra}}, {\it{Hopf algebra}} etc.~below
mean those over $\bC$.

Let $q\in\bCt$.
Let $\ha\in\bCt$.
Let $q_{11}=q_{00}=q^2$, $q_{01}=\ha$ and $q_{10}=q^{-4}\ha^{-1}$.
Let
\begin{equation}\label{eqn:defQ}
Q=
\left(\begin{array}{cc} 
q_{00} & q_{01} \\ q_{10} & q_{11} 
\end{array}\right)
=
\left(\begin{array}{cc} 
q^2 & \ha \\ q^{-4}\ha^{-1} & q^2 
\end{array}\right) \quad
(\in{\mathrm{M}}_2(\bC)).
\end{equation}

\begin{equation*}
\mbox{From now on until Theorem~\ref{theorem:trUthPBW}, we assume that $q(q^n-1)\ne 0$ for all $n\in\bN$.}
\end{equation*}

Let $I=\fkJ_{0,1}$ $(=\{0,1\}$.
Let $U=\trUqQ({\hat {sl_2}})$ be the associative $\bC$-algebra with identity
defined by the generators
\begin{equation*}
K_i^{\pm 1}, L_i^{\pm 1}, E_i,  F_i\quad (i\in I), 
\end{equation*} and the relations:
\begin{align}
&XY=YX\,\, (X,Y\in\{K_i^{\pm 1}, L_i^{\pm 1}\}),\quad
K_iK_i^{-1}=K_i^{-1}K_i=L_iL_i^{-1}=L_i^{-1}L_i=1,
\\
&K_iE_jK_i^{-1}=q_{ij}E_j,\,\,K_iF_jK_i^{-1}=q_{ij}^{-1}F_j,\,\,
L_iE_jL_i^{-1}=q_{ji}^{-1}E_j,\,\,L_iF_jL_i^{-1}=q_{ji}F_j, \\
&[E_i,F_j]=\delta_{ij}(-K_i+L_i), \label{eqn:defEiEjKiLi} \\
&E_i^3E_j-q_{ij}(3)_{q_{ii}}E_i^2E_jE_i+q_{ii}q_{ij}^2(3)_{q_{ii}}E_iE_jE_i^2-q_{ii}^3q_{ij}^3E_jE_i^3=0
\quad (i\ne j), \label{eqn:defEiEj} \\
&F_i^3F_j-q_{ji}(3)_{q_{ii}}F_i^2F_jF_i+q_{ii}q_{ji}^2(3)_{q_{ii}}F_iF_jF_i^2-q_{ii}^3q_{ji}^3F_jF_i^3=0
\quad (i\ne j). \label{eqn:defFiFj} 
\end{align}

Define the linear homomorphisms
$\Delta:\trU\to\trU\otimes\trU$ and 
$\varepsilon:\trU\to\trU\otimes\trU$, 
and the linear anti-isomorphisms by
\begin{equation*}
\begin{array}{l}
\Delta(\trK_i)=\trK_i\otimes\trK_i,\,
\Delta(\trL_i)=\trL_i\otimes\trL_i,\\
\Delta(\trE_i)=\trE_i\otimes 1+\trK_i\otimes \trE_i\quad
\Delta(\trF_i)=\trF_i\otimes \trL_i+1\otimes \trF_i, \\
\varepsilon(\trK_i)=\varepsilon(\trL_i)=1,\,
\varepsilon(\trE_i)=\varepsilon(\trF_i)=0, \\
S(\trK_i)=\trK_i^{-1},\,
S(\trL_i)=\trL_i^{-1},\,
S(\trE_i)=-\trK_i^{-1}\trE_i,\,
S(\trF_i)=-\trF_i\trL_i^{-1}
\\
(i\in I).
\end{array}
\end{equation*}
Then $(\trU,\Delta,\varepsilon,S)$
is the Hopf algebra.

\noindent
{\rm{(Ba1)}} Let $\trUo$, $\trUp$, $\trUm$,
$\trUpgeq$, $\trUmleq$,
$\trUogeq$, $\trUoleq$
be the $\bC$-subalgebras 
(with $1$)
defined as
$\langle \trK_i^{\pm 1},
\trL_i^{\pm 1}\,|\,
i\in I\rangle$, 
$\langle \trE_i\,|\,
i\in I\rangle$, 
$\langle \trF_i\,|\,
i\in I\rangle$,
$\langle \trE_i, \trK_i^{\pm 1}\,|\,
i\in I\rangle$, 
$\langle \trF_i, \trL_i^{\pm 1}\,|\,
i\in I\rangle$,
$\langle \trK_i^{\pm 1}\,|\,
i\in I\rangle$, 
$\langle \trL_i^{\pm 1}\,|\,
i\in I\rangle$,
respectively.
\newline\newline
In a standard argument (see \cite{Y89}), we see the following {\rm{(Ba2)}}-{\rm{(Ba4)}}.
\newline\newline
{\rm{(Ba2)}} 
$\trUm\otimes \trUo\otimes\trUp$,
$\trUmleq\otimes\trUpgeq$,
$\trUoleq\otimes\trUogeq$,
$\trUp\otimes\trUogeq$,
$\trUm\otimes\trUoleq$
can be naturally identified 
with $\trU$, $\trU$, $\trUo$, 
$\trUpgeq$, $\trUmleq$, 
respectively,  as a $\bC$-linear space.
\newline
{\rm{(Ba3)}}  
$\trUo$ has the
 $\bC$-basis 
$\{K_0^{x_0}K_1^{x_1}L_0^{y_0}L_1^{y_1}|
x_0, x_1, y_0, y_1\in\bZ\}$ (i.e., 
$\trUo$ is the Laurent polynomial $\bC$-algebra in 
$K_0^{\pm 1}$, $K_1^{\pm 1}$, $L_0^{\pm 1}$, $L_1^{\pm 1}$).
\newline
{\rm{(Ba4)}}  $\trUp$ (resp. $\trUm$) is the $\bC$-algebra presented by
the generators $E_0$, $E_1$ (resp. $F_0$, $F_1$) and the relations
\eqref{eqn:defEiEj} (resp. \eqref{eqn:defFiFj}).
\newline\par
Let $\Pi$ be a set composed of two elements $\al_0$ and $\al_1$, i.e.,
$\Pi=\{\al_0,\al_1\}$.
Let $\bZ\Pi$ be the rank-two free $\bZ$-module
with the basis $\Pi$, i.e., $\bZ\Pi=\bZ\al_0\oplus\bZ\al_1$.
Let $\bZgeqo\Pi=\bZgeqo\al_0\oplus\bZgeqo\al_1$
$(\subset\bZ\Pi)$.
For $x$, $y\in\bZ$,
let $K_{x\al_0+y\al_1}=K_0^xK_1^y$
and $L_{x\al_0+y\al_1}=L_0^xL_1^y$.
For $\beta=x\al_0+y\al_1\in\bZ\Pi$ $(x, y\in\bZ)$,
let $\trK_\beta=K_0^xK_1^y$ and $\trL_\beta=L_0^xL_1^y$.

Let $\chi=\chi_Q:\bZ\Pi\times\bZ\Pi\to\bCt$
be the $\bZ$-bimodule homomorphism defined by
$\chi(\al_i,\al_j):=q_{ij}$ $(i,j\in I)$. 
\newline\newline
{\rm{(Ba5)}} We define the $\bC$-linear subspaces $U_\lambda$ $(\lambda\in\bZ\Pi)$
of $U$ such that 
\begin{equation} \label{eqn:DsumtrU}
\begin{array}{l}
U=\oplus_{\lambda\in\bZgeqo\Pi}U_\lambda,\,\,
1, K_i^{\pm 1}, L_i^{\pm 1}\in U_0,\,
E_i\in U_{\al_i},\,F_i\in U_{-\al_i}\,\,(i\in I) \\
\mbox{and}\,\,
U_\lambda U_\mu\subset U_{\lambda+\mu}\,
(\lambda,\mu\in\bZgeqo\Pi).
\end{array}
\end{equation}
Let $\trU^\pm_\lambda:=\trU^\pm\cap \trU_\lambda$
$(\lambda,\mu\in\bZgeqo\Pi)$. Note that 
\begin{equation} \label{eqn:DsumUpm}
\begin{array}{l}
\trU^\pm=\oplus_{\lambda\in\bZgeqo\Pi}\trU^\pm_{\pm\lambda},\,\,
1\in\trU^\pm_0,\,\, \dim\trU^\pm_0=1,\\
E_i\in\trUp_{\al_i},\,F_i\in\trUm_{-\al_i},\,
\dim\trU^\pm_{\pm\al_i}=1\,(i\in I), \\
\mbox{and}\,\,
\trU^\pm_{\pm\lambda}\trU^\pm_{\pm\mu}\subset\trU^\pm_{\pm(\lambda+\mu)}\,
(\lambda,\mu\in\bZgeqo\Pi).
\end{array}
\end{equation}
\begin{equation}
\label{eqn:defNormbZgeqo}
\mbox{For $\lambda=x\al_0+y\al_1\in\bZgeqo\Pi$ $(x,y\in\bZgeqo)$,
let $|\lambda|=x+y$ $(\in\bZgeqo)$. }    
\end{equation}

\section{Positive part $\trUp$ of $\trUqQ({\hat {sl_2}})$}

\subsection{Root vectors $\trE_{k\delta+\al_0}$, $\trE_{k\delta+\al_1}$ and $\trE_{(k+1)\delta}$ with $k\geq 0$}

\begin{lemma}
There exists a unique $\bC$-algebra 
anti-isomorphism $\psi:\trUp\to\trUp$
such that $\psi(E_0):=E_1$ and $\psi(E_1):=E_0$.
\end{lemma}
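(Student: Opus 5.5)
The plan is to use the presentation of $\trUp$ in (Ba4): $\trUp$ is the $\bC$-algebra generated by $E_0,E_1$ subject only to the two relations \eqref{eqn:defEiEj} for $(i,j)=(0,1)$ and $(i,j)=(1,0)$. Let $T$ be the free algebra on $E_0,E_1$. There is a unique anti-homomorphism $\tilde\psi:T\to T^{\rm op}\to\trUp$ sending $E_0\mapsto E_1$ and $E_1\mapsto E_0$. For a word $E_{i_1}\cdots E_{i_r}$ it is given by reversing the word and swapping the indices. To obtain $\psi$, I will show that $\tilde\psi$ kills the two defining relators. Then $\tilde\psi$ factors through $\trUp$.

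The key step is a direct computation on the relator $r_{01}=E_0^3E_1-q_{01}(3)_{q_{00}}E_0^2E_1E_0+q_{00}q_{01}^2(3)_{q_{00}}E_0E_1E_0^2-q_{00}^3q_{01}^3E_1E_0^3$. Reversing and swapping gives
\begin{equation*}
\tilde\psi(r_{01})=E_0E_1^3-\ha(3)_{q^2}E_1E_0E_1^2+q^2\ha^2(3)_{q^2}E_1^2E_0E_1-q^6\ha^3E_1^3E_0 .
\end{equation*}
Now substitute $q_{11}=q^2$ and $q_{10}=q^{-4}\ha^{-1}$ into the relator $r_{10}$ and multiply by $-q^6\ha^3$. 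The coefficients become, in order,
\begin{itemize}
\item $-q^6\ha^3$ on $E_1^3E_0$;
\item $q^{6}\ha^{3}q^{-4}\ha^{-1}(3)_{q^2}=q^2\ha^2(3)_{q^2}$ on $E_1^2E_0E_1$;
\item $-q^6\ha^3q^2q^{-8}\ha^{-2}(3)_{q^2}=-\ha(3)_{q^2}$ on $E_1E_0E_1^2$;
\item $q^6\ha^3q^{-6}\ha^{-3}=1$ on $E_0E_1^3$.
\end{itemize}
So $\tilde\psi(r_{01})=-q^6\ha^3r_{10}$, which is $0$ in $\trUp$. The symmetric computation gives $\tilde\psi(r_{10})=-q^{-6}\ha^{-3}r_{01}$, which is also $0$. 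This is exactly where the special shape of $Q$ matters: $q_{00}=q_{11}$ and $q_{01}q_{10}=q^{-4}=q_{00}^{-2}$. Checking these scalar identities is the only real content of the proof, and I expect no obstacle beyond careful bookkeeping of the coefficients.

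Thus $\tilde\psi$ induces an algebra anti-homomorphism $\psi:\trUp\to\trUp$ with $\psi(E_0)=E_1$ and $\psi(E_1)=E_0$. The composite $\psi\circ\psi$ is an algebra homomorphism that fixes the generators $E_0,E_1$, so $\psi\circ\psi=\rmid$. Hence $\psi$ is bijective, i.e. an anti-isomorphism. Uniqueness is immediate, since $E_0$ and $E_1$ generate $\trUp$ and an anti-homomorphism is determined by its values on generators.
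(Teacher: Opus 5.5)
Your proof is correct: the coefficient check $\tilde\psi(r_{01})=-q^6a^3\,r_{10}$ (and hence $\tilde\psi(r_{10})=-q^{-6}a^{-3}\,r_{01}$) holds exactly because $q_{00}=q_{11}$ and $q_{01}q_{10}=q_{00}^{-2}$. Together with the presentation (Ba4), this makes $\tilde\psi$ factor through $\trUp$, and $\psi\circ\psi=\mathrm{id}$ plus generation by $E_0,E_1$ give bijectivity and uniqueness. The paper states the lemma without proof and implicitly relies on this same argument via (Ba4); your write-up makes that verification explicit.
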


Let
\begin{equation*}
\lb X,Y\rb:=XY-\chi(\lambda,\mu)YX \quad
(X\in\trUp_\lambda, Y\in\trUp_\mu).
\end{equation*}
\newline
Then we have
\begin{equation*}
\left\{\begin{array}{l}
\lb \lb X,Y\rb,Z\rb
=\lb X, \lb Y,Z\rb \rb
+\chi(\mu,\nu)\lb X,Z\rb Y-\chi(\lambda,\mu) Y \lb X,Z\rb, \\
\lb \lb X,Y\rb,Z\rb
=\lb X, \lb Y,Z\rb \rb
+\chi(\mu,\nu)
\left(\lb X,Z\rb Y- 
{\frac {\chi(\lambda,\mu)} {\chi(\mu,\nu)}} Y \lb X,Z\rb
\right), \\
\lb X, \lb Y,Z\rb \rb  
=\lb \lb X,Y\rb,Z\rb
+\chi(\lambda,\mu)
\left(Y\lb X,Z\rb - 
{\frac {\chi(\mu,\nu)} {\chi(\lambda,\mu)}} \lb X,Z\rb Y
\right) \\
\mbox{for $X\in\trUp_\lambda$, $Y\in\trUp_\mu$ and $Z\in\trUp_\nu$.}
\end{array}\right.
\end{equation*}

Let $\delta:=\al_0+\al_1$.
For $n\in\bZgeqo$, $X\in\trUp_{n\delta}$, $\lambda\in\bZgeqo\Pi$ and $Y\in \trUp_\lambda$,
we have
\begin{equation*}
\lb X,Y\rb
=-\chi(n\delta,\lambda)\lb Y,X\rb
\quad\rm{and}\quad
\lb Y,X\rb=-\chi(\lambda,n\delta)\lb X,Y\rb.
\end{equation*} 

For $n\in\bZgeqo$, $Z\in \trUp_{n\delta}$, 
$\lambda\in\bZgeqo\Pi$, $X\in \trUp_\lambda$,
$\mu\in\bZgeqo\Pi$ and $Y\in \trUp_\mu$,
we have
\begin{equation*}
\begin{array}{l}
\lb \lb X,Y \rb, Z\rb=
\lb X,\lb Y , Z\rb\rb
+\chi(\mu,n\delta)\lb\lb X,Z\rb, Y\rb, \\
\lb Z, \lb X,Y \rb\rb=\lb\lb Z,X\rb, Y \rb
+\chi(n\delta,\lambda)
\lb X,\lb Z, Y \rb\rb.
\end{array}
\end{equation*} 

Define
\begin{equation*}
\left\{\begin{array}{l}
\trE_{\al_1}=\trE_1, \trE_{\al_0}=\trE_0, \\
\trE_{n\delta+\al_1}={\frac 1 {[2]_q}}
\lb \trE_\delta,\trE_{(n-1)\delta+\al_1}\rb \quad (n\in\bN), \\
\trE_{n\delta}=\lb \trE_{\al_0},\trE_{(n-1)\delta+\al_1}\rb \quad (n\in\bN), \\
\trE_{n\delta+\al_0}={\frac 1 {[2]_q}}
\lb \trE_{(n-1)\delta+\al_0}, \trE_\delta\rb\quad (n\in\bN). 
\end{array}\right.
\end{equation*}

By \eqref{eqn:defEiEj},
we have
\begin{equation}
\label{eqn:Edelaliali}
\lb \trE_{\delta+\al_1},\trE_{\al_1}\rb=0,
\quad \lb \trE_{\al_0},\trE_{\delta+\al_0}\rb=0.
\end{equation}

\begin{lemma} We have
$\psi(\trE_{n\delta})=\lb \trE_{(n-1)\delta+\al_0}, \trE_{\al_1}\rb$
$(n\in\bN)$, 
$\psi(\trE_{n\delta+\al_1})=\trE_{n\delta+\al_0}$ $(n\in\bZgeqo)$, 
and $\psi(\trE_{n\delta+\al_0})=\trE_{n\delta+\al_1}$ $(n\in\bZgeqo)$.
In particular, $\psi(\trE_\delta)=\trE_\delta$.
\end{lemma}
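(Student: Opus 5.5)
The plan is to establish one compatibility identity between $\psi$ and the bracket $\lb\,,\,\rb$, and then get all three formulas by induction on $n$ straight from the recursive definitions of the root vectors.

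\emph{Compatibility identity.} Let $\sigma:\bZ\Pi\to\bZ\Pi$ be the $\bZ$-linear map with $\sigma(\al_0)=\al_1$ and $\sigma(\al_1)=\al_0$. Since $\psi$ is an anti-isomorphism swapping $E_0$ and $E_1$, a monomial in the $E_i$ of weight $\lambda$ goes to a monomial of weight $\sigma(\lambda)$. Hence, using \eqref{eqn:DsumUpm},
\[
\psi(\trUp_\lambda)=\trUp_{\sigma(\lambda)}\quad(\lambda\in\bZgeqo\Pi).
\]
Next I check that $\chi(\sigma(\mu),\sigma(\lambda))=\chi(\lambda,\mu)$. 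By bilinearity it suffices to test on $\Pi$:
\begin{itemize}
\item $\chi(\sigma\al_1,\sigma\al_0)=\chi(\al_0,\al_1)=q_{01}$ and $\chi(\sigma\al_0,\sigma\al_1)=\chi(\al_1,\al_0)=q_{10}$;
\item $\chi(\sigma\al_i,\sigma\al_i)=q_{1-i,1-i}=q^2=q_{ii}$.
\end{itemize}
The diagonal case uses exactly $q_{00}=q_{11}$; this is the only point where the shape of $Q$ enters, and it is the step I would check most carefully. Now let $X\in\trUp_\lambda$ and $Y\in\trUp_\mu$. Then
\[
\psi(\lb X,Y\rb)=\psi(Y)\psi(X)-\chi(\lambda,\mu)\psi(X)\psi(Y).
\]
Since $\psi(Y)\in\trUp_{\sigma\mu}$ and $\psi(X)\in\trUp_{\sigma\lambda}$, the symmetry of $\chi$ gives
\[
\psi(\lb X,Y\rb)=\lb\psi(Y),\psi(X)\rb .
\]

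\emph{Base cases.} $\psi(\trE_{\al_1})=\trE_{\al_0}$ and $\psi(\trE_{\al_0})=\trE_{\al_1}$ hold by definition. For $\trE_\delta$,
\[
\psi(\trE_\delta)=\psi(\lb\trE_0,\trE_1\rb)=\lb\trE_0,\trE_1\rb=\trE_\delta,
\]
which is the ``in particular'' claim. It is also the case $n=1$ of the first formula.

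\emph{Induction.} Assume $\psi(\trE_{(n-1)\delta+\al_1})=\trE_{(n-1)\delta+\al_0}$ and $\psi(\trE_{(n-1)\delta+\al_0})=\trE_{(n-1)\delta+\al_1}$. Then:
\begin{itemize}
\item $\psi(\trE_{n\delta+\al_1})=\frac{1}{[2]_q}\lb\psi(\trE_\delta),\psi(\trE_{(n-1)\delta+\al_1})\rb$ after reversing the order, which equals $\frac{1}{[2]_q}\lb\trE_{(n-1)\delta+\al_0},\trE_\delta\rb=\trE_{n\delta+\al_0}$.
\item The symmetric computation gives $\psi(\trE_{n\delta+\al_0})=\frac{1}{[2]_q}\lb\trE_\delta,\trE_{(n-1)\delta+\al_1}\rb=\trE_{n\delta+\al_1}$. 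Alternatively, $\psi^2$ is an algebra automorphism fixing $E_0$ and $E_1$, so $\psi^2=\rmid$ and this follows from the previous item.
\item $\psi(\trE_{n\delta})=\psi(\lb\trE_{\al_0},\trE_{(n-1)\delta+\al_1}\rb)=\lb\trE_{(n-1)\delta+\al_0},\trE_{\al_1}\rb$, using the induction hypothesis.
\end{itemize}
This completes the induction and proves the lemma.
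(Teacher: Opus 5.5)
Your proof is correct and is essentially the argument the paper relies on. The paper states this lemma without a written proof. Right afterwards it applies $\psi$ to $\lb \trE_{2\delta},\trE_{\al_1}\rb$ using $\chi(2\delta,\al_0)=\chi(\al_1,2\delta)$, which is exactly your identity $\psi(\lb X,Y\rb)=\lb\psi(Y),\psi(X)\rb$ (coming from $q_{00}=q_{11}$), combined with induction along the recursive definitions of the root vectors.
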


We have
\begin{equation}
\label{eqn:eqnlemma1aa}
\trE_{2\delta}=\lb \trE_{\al_0},\trE_{\delta+\al_1}\rb=\lb \trE_{\delta+\al_0}, \trE_{\al_1}\rb,
\end{equation}
since
\begin{equation*}
\begin{array}{l}
\trE_{2\delta}=\lb \trE_{\al_0},\trE_{\delta+\al_1}\rb
={\frac 1 {[2]_q}}\lb \trE_{\al_0}, \lb \trE_\delta,\trE_{\al_1}\rb\rb \\
\quad =\lb \trE_{\delta+\al_0}, \trE_{\al_1} \rb
+{\frac {\chi(\al_0,\delta)} {[2]_q}}
(\trE_\delta^2-{\frac {\chi(\delta,\al_1)} {\chi(\al_0,\delta)}}\trE_\delta^2) \\
\quad =\lb \trE_{\delta+\al_0}, \trE_{\al_1}\rb.
\end{array}
\end{equation*}
By \eqref{eqn:eqnlemma1aa}, 
we have
$\psi(\trE_{2\delta})=\trE_{2\delta}$.

We have
\begin{equation*}
\begin{array}{l}
\lb \trE_{2\delta},\trE_{\al_1}\rb
= \lb \lb \trE_{\al_0}, \trE_{\delta+\al_1}\rb, \trE_{\al_1}\rb \\
\quad =\chi(\delta+\al_1,\al_1)(\trE_\delta \trE_{\delta+\al_1}
-{\frac {\chi(\al_0,\delta+\al_1)} {\chi(\delta+\al_1,\al_1)}}\trE_{\delta+\al_1}\trE_\delta) \\
\quad = \ha q^4(\trE_\delta \trE_{\delta+\al_1}-{\frac {\ha^2 q^2} {\ha q^4}}\trE_{\delta+\al_1}\trE_\delta) \\
\quad = \ha q^4(\trE_\delta \trE_{\delta+\al_1}-{\frac {\ha} {q^2}}
\cdot{\frac 1 {\ha q^2}}(\trE_\delta \trE_{\delta+\al_1}-[2]_qE_{2\delta+\al_1})) \\
\quad = \ha q^4(1-q^{-4})\trE_\delta \trE_{\delta+\al_1}+\ha [2]_qE_{2\delta+\al_1} \\
\quad = \ha (q^4-1)\trE_\delta \trE_{\delta+\al_1}+\ha [2]_qE_{2\delta+\al_1}
\end{array}
\end{equation*}
Since $\chi(2\delta,\al_0)=\chi(\al_1,2\delta)$, we have
\begin{equation*}
\lb \trE_{\al_0},\trE_{2\delta}\rb=\psi(\lb \trE_{2\delta},\trE_{\al_1}\rb)
=\ha (q^4-1)\trE_{\delta+\al_0}\trE_\delta
+\ha [2]_qE_{2\delta+\al_0}.
\end{equation*}
We have
\begin{equation*}
\begin{array}{l}
\lb \trE_{\delta+\al_0},\trE_{\delta+\al_1}\rb
={\frac 1 {[2]_q}}\lb\lb \trE_{\al_0},\trE_\delta\rb,\trE_{\delta+\al_1}\rb \\
\quad = \lb \trE_{\al_0}, \trE_{2\delta+\al_1}\rb
+{\frac {\chi(\delta,\delta+\al_1)} {[2]_q}}
(\trE_{2\delta}\trE_\delta-{\frac {\chi(\al_0,\delta)} {\chi(\delta,\delta+\al_1)}}\trE_\delta \trE_{2\delta}) \\
\quad = \trE_{3\delta}
+{\frac {\ha q^2} {[2]_q}}
(\trE_{2\delta}\trE_\delta-{\frac {\ha q^2} {\ha q^2}}\trE_\delta \trE_{2\delta}) \\
\quad = \trE_{3\delta}
+{\frac {\ha q^2} {[2]_q}}
\lb \trE_{2\delta},\trE_\delta\rb,
\end{array}
\end{equation*}
and
\begin{equation*}
\begin{array}{l}
\lb \trE_{2\delta+\al_0}, \trE_{\al_1}\rb ={\frac 1 {[2]_q}}\lb\lb \trE_{\delta+\al_0},\trE_\delta\rb, \trE_{\al_1}\rb \\
\quad = \lb \trE_{\delta+\al_0}, \trE_{\delta+\al_1}\rb
+{\frac {\chi(\delta,\delta+\al_1)} {[2]_q}}(\trE_{2\delta} \trE_\delta
-{\frac {\chi(\al_0,\delta)} {\chi(\delta,\delta+\al_1)}}\trE_\delta \trE_{2\delta})  \\
\quad = \lb \trE_{\delta+\al_0}, \trE_{\delta+\al_1}\rb
+{\frac {\chi(\delta,\delta+\al_1)} {[2]_q}}\lb \trE_{2\delta}, \trE_\delta\rb \\
\quad = \trE_{3\delta}+{\frac {2\ha q^2} {[2]_q}}\lb \trE_{2\delta}, \trE_\delta\rb,
\end{array}
\end{equation*}
which implies
\begin{equation*}
\begin{array}{l}
\lb \trE_{2\delta},\trE_\delta\rb =\lb \trE_{2\delta}, \lb \trE_{\al_0},\trE_{\al_1}\rb\rb \\
\quad =\lb\lb \trE_{2\delta}, \trE_{\al_0}\rb, \trE_{\al_1}\rb
+\chi(2\delta,\al_0)\lb \trE_{\al_0},\lb \trE_{2\delta}, \trE_{\al_1}\rb\rb \\
\quad =\chi(2\delta,\al_0)
(-\lb\lb \trE_{\al_0},\trE_{2\delta}\rb, \trE_{\al_1}\rb+\lb \trE_{\al_0},\lb \trE_{2\delta}, \trE_{\al_1}\rb\rb) \\
\quad = {\frac 1 {\chi(\al_0,2\delta)}}(
-\lb \ha (q^4-1)\trE_{\delta+\al_0}\trE_\delta
+\ha [2]_qE_{2\delta+\al_0}, \trE_{\al_1}\rb \\
\quad\quad +\lb \trE_{\al_0},  \ha (q^4-1)\trE_\delta \trE_{\delta+\al_1}+\ha [2]_qE_{2\delta+\al_1}\rb ) \\
\quad ={\frac 1 {\ha^2 q^4}}(-\ha (q^4-1)({\frac 1 {[2]_q}}\trE_{\delta+\al_0}\trE_{\delta+\al_1}+\chi(\delta,\al_1)\trE_{2\delta} \trE_\delta) \\
\quad\quad -\ha [2]_q(\trE_{3\delta}+{\frac {2\ha q^2} {[2]_q}}\lb \trE_{2\delta}, \trE_\delta\rb) \\
\quad\quad + \ha (q^4-1)({\frac 1 {[2]_q}}\trE_{\delta+\al_0}\trE_{\delta+\al_1}+\chi(\al_0,\delta)\trE_\delta \trE_{2\delta}) \\
\quad\quad + \ha [2]_q \trE_{3\delta}) \\
\quad ={\frac 1 {\ha^2 q^4}}(-\ha^2 q^2(q^4-1)\lb \trE_{2\delta},\trE_\delta\rb-2\ha^2 q^2\lb \trE_{2\delta},\trE_\delta\rb) \\
\quad ={\frac 1 {\ha^2 q^4}}(-\ha^2 q^2(q^4+1))\lb \trE_{2\delta},\trE_\delta\rb \\
\quad = -(q^2+q^{-2})\lb \trE_{2\delta},\trE_\delta\rb.
\end{array}
\end{equation*}
Hence we have
\begin{equation*}
0=(q^2+1+q^{-2})\lb \trE_{2\delta},\trE_\delta\rb
=q^{-2}(q^2+q+1)(q^2-q+1)\lb \trE_{2\delta},\trE_\delta\rb,
\end{equation*} which implies
\begin{equation}
\label{eqn:EtwodelEdel}
\lb \trE_{2\delta},\trE_\delta\rb=0.
\end{equation}

We have
\begin{equation*}
\begin{array}{l}
\lb \trE_\delta^2,\trE_{\al_1}\rb =
[2]_q( \trE_\delta \trE_{\delta+\al_1}+\chi(\delta,\al_1)\trE_{\delta+\al_1}\trE_\delta)\\
\quad =[2]_q( \trE_\delta \trE_{\delta+\al_1}+
{\frac {\chi(\delta,\al_1)} {\chi(\delta,\delta+\al_1)}}
(\trE_\delta \trE_{\delta+\al_1}-[2]_q \trE_{2\delta+\al_1})) \\
\quad = -[2]_q^2 \trE_{2\delta+\al_1} +  2[2]_q \trE_\delta \trE_{\delta+\al_1}.
\end{array}
\end{equation*}
Using $\psi$, we have
\begin{equation*}
\lb \trE_{\al_0},\trE_\delta^2\rb = -[2]_q^2 \trE_{2\delta+\al_0} +  2[2]_q \trE_{\delta+\al_0}\trE_\delta. 
\end{equation*}

Let ${\tilde{E}}_{2\delta}=-{\frac {q-q^{-1}} 2}\trE_\delta^2
+{\frac 1 {\ha q^2}}\trE_{2\delta}$. Then we have
\begin{equation*}
\begin{array}{l}
\lb {\tilde{E}}_{2\delta}, \trE_{\al_1} \rb \\
\quad = -{\frac {q-q^{-1}} 2}( -[2]_q^2 \trE_{2\delta+\al_1} +  2[2]_q \trE_\delta \trE_{\delta+\al_1})  \\
\quad\quad + {\frac 1 {\ha q^2}}(\ha (q^4-1)\trE_\delta \trE_{\delta+\al_1}+\ha [2]_qE_{2\delta+\al_1}) \\
\quad = [2]_q([2]_q\cdot{\frac {q-q^{-1}} 2}+q^{-2})\trE_{2\delta+\al_1} \\
\quad = [2]_q\cdot{\frac {q^2+q^{-2}} 2}\trE_{2\delta+\al_1} \\
\quad = {\frac {[4]_q}  2}\trE_{2\delta+\al_1}.
\end{array}
\end{equation*} For $n\in\bZgeqo$, by \eqref{eqn:EtwodelEdel}, we have
\begin{equation}
\label{eqn:tilEtwodelEndelalone}
\lb {\tilde{E}}_{2\delta}, \trE_{n\delta+\al_1}\rb ={\frac {[4]_q}  2}\trE_{(n+2)\delta+\al_1} ,
\end{equation} and using $\psi$, we also have
\begin{equation}
\label{eqn:tilEtwodelEndelalzero}
\lb \trE_{n\delta+\al_0}, {\tilde{E}}_{2\delta} \rb ={\frac {[4]_q}  2}\trE_{(n+2)\delta+\al_0}.
\end{equation}

For $m,n\in\bZgeqo$, 
we have
\begin{equation}
\label{eqn:EdelEmdelalEdelnal}
\begin{array}{l}
\lb \trE_\delta, \lb \trE_{m\delta+\al_1}, \trE_{n\delta+\al_1} \rb\rb \\
\quad = [2]_q(\lb \trE_{(m+1)\delta+\al_1},\trE_{n\delta+\al_1}\rb +
\ha q^2\lb \trE_{m\delta+\al_1}, \trE_{(n+1)\delta+\al_1}\rb),
\end{array}
\end{equation} 
\begin{equation}
\label{eqn:EdelEdelEmdelalEdelnal}
\begin{array}{l}
\lb \trE_\delta, \lb \trE_\delta, \lb \trE_{m\delta+\al_1}, \trE_{n\delta+\al_1} \rb\rb\rb \\
\quad = [2]_q^2(\lb \trE_{(m+2)\delta+\al_1}, \trE_{n\delta+\al_1} \rb
+2\ha q^2\lb \trE_{(m+1)\delta+\al_1}, \trE_{(n+1)\delta+\al_1} \rb \\
\quad\quad +\ha^2 q^4\lb \trE_{m\delta+\al_1}, \trE_{(n+2)\delta+\al_1} \rb)
\end{array}
\end{equation} and, by 
\eqref{eqn:tilEtwodelEndelalone}, we also have
\begin{equation}
\label{eqn:EtwodelEmdelaloneEndelalone}
\begin{array}{l}
\lb {\tilde{E}}_{2\delta},\lb \trE_{m\delta+\al_1}, \trE_{n\delta+\al_1} \rb\rb \\
\quad = {\frac {[4]_q}  2}
(\lb \trE_{(m+2)\delta+\al_1}, \trE_{n\delta+\al_1} \rb
+\ha^2 q^4\lb \trE_{m\delta+\al_1}, \trE_{(n+2)\delta+\al_1} \rb).
\end{array}
\end{equation}
By \eqref{eqn:Edelaliali},
\eqref{eqn:EdelEmdelalEdelnal}
and \eqref{eqn:EtwodelEmdelaloneEndelalone},
we have 
\newline\newline
\mbox{$\EQone$}\quad\quad\quad\quad\quad\quad
$\lb \trE_{(n+1)\delta+\al_1}, \trE_{n\delta+\al_1}\rb =0 
\quad(n\in\bZgeqo)$,
\newline\newline
and
\newline\newline
\mbox{$\EQtwo$}\quad
$\lb \trE_{(n+r)\delta+\al_1}, \trE_{n\delta+\al_1}\rb
+\ha^{r-1} q^{2(r-1)}\lb \trE_{(n+1)\delta+\al_1}, \trE_{(n+r-1)\delta+\al_1}\rb
=0 \quad (n\in\bZgeqo, r\in\bN)$.
\newline\newline
Using $\psi$, we also have
\newline\newline
\mbox{$\EQthree$}\quad\quad\quad\quad\quad\quad
$\lb \trE_{n\delta+\al_0}, \trE_{(n+1)\delta+\al_0}\rb =0 
\quad(n\in\bZgeqo)$,
\newline\newline
and
\newline\newline
\mbox{$\EQfour$}\,\,
$\lb  \trE_{n\delta+\al_0}, \trE_{(n+r)\delta+\al_0}\rb
+\ha^{r-1} q^{2(r-1)}\lb \trE_{(n+r-1)\delta+\al_0}, \trE_{(n+1)\delta+\al_0}\rb
=0 \quad (n\in\bZgeqo, r\in\bN).$
\newline\newline
Note that
\begin{equation*}
\begin{array}{l}
\chi(\delta+\al_1,(r-1)\delta+\al_1)
=\chi((r-1)\delta+\al_1+(-r+2)\delta,(r-1)\delta+\al_1) \\
\quad = q^2\cdot \ha^{-r+2}q^{2(-r+2)}=\ha^{-r+2}q^{2(-r+3)}.
\end{array}
\end{equation*}
Then we have 
\begin{equation*}
\begin{array}{l}
\lb \trE_{\al_0}, \lb \trE_{\delta+\al_1}, \trE_{(r-2)\delta+\al_1}\rb\rb \\
\quad = \lb \trE_{2\delta}, \trE_{(r-2)\delta+\al_1}\rb
+\chi(\al_0,\delta+\al_1)
(\trE_{\delta+\al_1}\trE_{(r-1)\delta}
-{\frac {\chi(\delta+\al_1,(r-2)\delta+\al_1)} {\chi(\al_0,\delta+\al_1)}}
\trE_{(r-1)\delta}\trE_{\delta+\al_1}) \\
\quad = \lb \trE_{2\delta}, \trE_{(r-2)\delta+\al_1}\rb
+\ha^2q^2
(\trE_{\delta+\al_1}\trE_{(r-1)\delta}
-{\frac {\ha^{-r+3}q^{2(-r+4)}} {\ha^2q^2}}
\trE_{(r-1)\delta}\trE_{\delta+\al_1})  \\
\quad = \lb \trE_{2\delta}, \trE_{(r-2)\delta+\al_1}\rb
+\ha^2q^2
({\frac 1 {\ha^{r-1}q^{2(r-1)}}}(
\trE_{(r-1)\delta} \trE_{\delta+\al_1}-\lb \trE_{(r-1)\delta}, \trE_{\delta+\al_1}\rb)
-\ha^{-r+1}q^{2(-r+3)}
\trE_{(r-1)\delta}\trE_{\delta+\al_1}) \\
\quad = \lb \trE_{2\delta}, \trE_{(r-2)\delta+\al_1}\rb
-\ha^{-r+3}q^{-2r+4}(q^4-1)\trE_{(r-1)\delta}\trE_{\delta+\al_1}
-\ha^{-r+3} q^{-2(r-2)}\lb \trE_{(r-1)\delta}, \trE_{\delta+\al_1}\rb \\
\quad = (\ha (q^4-1) \trE_\delta \trE_{(r-1)\delta+\al_1}+\ha [2]_q \trE_{r\delta+\al_1})  \\
\quad\quad -\ha^{-r+3}q^{-2r+4}(q^4-1)\trE_{(r-1)\delta}\trE_{\delta+\al_1}
-\ha^{-r+3} q^{-2(r-2)}\lb \trE_{(r-1)\delta}, \trE_{\delta+\al_1}\rb \\
\quad = \ha (q^4-1) \trE_\delta \trE_{(r-1)\delta+\al_1}
-\ha^{-r+3}q^{-2r+4}(q^4-1)\trE_{(r-1)\delta}\trE_{\delta+\al_1} \\
\quad\quad +\ha [2]_q \trE_{r\delta+\al_1}
-\ha^{-r+3} q^{-2(r-2)}\lb \trE_{(r-1)\delta}, \trE_{\delta+\al_1}\rb.
\end{array}
\end{equation*}
\begin{equation}
\label{eqn:aninduction}
\mbox{To prove 
$\EQfive$-$\EQeight$ below, we use an induction on $|\cdot|$
of \eqref{eqn:defNormbZgeqo}.}
\end{equation}
We have
\begin{equation*}
\begin{array}{l}
\lb \trE_{r\delta}, \trE_{\al_1}\rb \\
\quad = \lb\lb \trE_{\al_0}, \trE_{(r-1)\delta+\al_1}\rb, \trE_{\al_1}\rb \\
\quad =\lb \trE_{\al_0}, \lb \trE_{(r-1)\delta+\al_1},\trE_{\al_1}\rb\rb
+\chi((r-1)\delta+\al_1,\al_1)
(\trE_\delta \trE_{(r-1)\delta+\al_1}
-{\frac {\chi(\al_0,(r-1)\delta+\al_1)} {\chi((r-1)\delta+\al_1,\al_1)}}
\trE_{(r-1)\delta+\al_1}\trE_\delta) \\
\quad =-\ha^{r-2} q^{2(r-2)}\lb \trE_{\al_0}, \lb \trE_{\delta+\al_1}, \trE_{(r-2)\delta+\al_1}\rb\rb
+\ha^{r-1}q^{2r}
(\trE_\delta \trE_{(r-1)\delta+\al_1}
-{\frac {\ha^r q^{2(r-1)}} {\ha^{r-1}q^{2r}}}
\trE_{(r-1)\delta+\al_1}\trE_\delta) \\
\quad =-\ha^{r-2} q^{2(r-2)}
\Bigl(
\ha (q^4-1) \trE_\delta \trE_{(r-1)\delta+\al_1}
-\ha^{-r+3}q^{-2r+4}(q^4-1)\trE_{(r-1)\delta}\trE_{\delta+\al_1} \\
\quad\quad +\ha [2]_q \trE_{r\delta+\al_1}
-\ha^{-r+3} q^{-2(r-2)}\lb \trE_{(r-1)\delta}, \trE_{\delta+\al_1}\rb
\Bigr)+\ha^{r-1}q^{2r}
\trE_\delta \trE_{(r-1)\delta+\al_1} \\
\quad\quad +\ha^r q^{2(r-1)}\cdot{\frac 1 {\ha q^2}}([2]_q \trE_{r\delta+\al_1}-\trE_\delta
\trE_{(r-1)\delta+\al_1}) \\
\quad =\ha (q^4-1) \trE_{(r-1)\delta} \trE_{\delta+\al_1}
+\ha \lb \trE_{(r-1)\delta}, \trE_{\delta+\al_1}\rb 
\\
\quad =\ha (q^4-1) \trE_{(r-1)\delta} \trE_{\delta+\al_1}
+
{\frac \ha {[2]_q}}
\lb \trE_\delta, \lb \trE_{(r-1)\delta}, \trE_{\al_1}\rb\rb
\quad(\mbox{by $\EQseven$ with \eqref {eqn:aninduction}})
\\
\quad =\ha (q^4-1) \trE_{(r-1)\delta} \trE_{\delta+\al_1}
+{\frac \ha {[2]_q}}
\lb \trE_\delta,\Bigl((q^4-1)\sum_{k=1}^{r-2}\ha^k 
\trE_{((r-1)-k)\delta}\trE_{k\delta+\al_1}+\ha^{(r-1)-1}[2]_qE_{(r-1)\delta+\al_1}
\Bigr) \rb \\
\hspace{3cm}
(\mbox{by $\EQfive$ with \eqref {eqn:aninduction}})
\\
\quad =\ha (q^4-1) \trE_{(r-1)\delta} \trE_{\delta+\al_1}
+\ha\Bigl((q^4-1)\sum_{k=1}^{r-2}\ha^k 
\trE_{((r-1)-k)\delta}\trE_{(k+1)\delta+\al_1}+\ha^{(r-1)-1}[2]_qE_{r\delta+\al_1}
\Bigr) \\
\quad =\ha (q^4-1) \trE_{(r-1)\delta} \trE_{\delta+\al_1}
+(q^4-1)\sum_{k=2}^{r-1}\ha^k 
\trE_{(r-k)\delta}\trE_{k\delta+\al_1}+\ha^{r-1}[2]_qE_{r\delta+\al_1}
\\
\quad =
(q^4-1)\sum_{k=1}^{r-1}\ha^k 
\trE_{(r-k)\delta}\trE_{k\delta+\al_1}+\ha^{r-1}[2]_qE_{r\delta+\al_1}.
\end{array}
\end{equation*}
\newline\newline
We conclude
\newline\newline
$\EQfive$ \quad
$\lb \trE_{r\delta}, \trE_{n\delta+\al_1}\rb  =
(q^4-1)\sum_{k=1}^{r-1}\ha^k 
\trE_{(r-k)\delta}\trE_{(n+k)\delta+\al_1}+\ha^{r-1}[2]_qE_{(n+r)\delta+\al_1}$,
\newline\newline
and
\newline\newline
$\EQsix$ \quad
$\lb \trE_{n\delta+\al_0}, \trE_{r\delta}\rb  =
(q^4-1)\sum_{k=1}^{r-1}\ha^k 
\trE_{(n+k)\delta+\al_0}\trE_{(r-k)\delta}+\ha^{r-1}[2]_qE_{(n+r)\delta+\al_0}$.
\newline\par
We have
\begin{equation*}
\begin{array}{l}
\lb \trE_{y\delta+\al_0},\trE_{x\delta+\al_1} \rb \\
\quad = 
{\frac 1 {[2]_q}}
\lb\lb \trE_{(y-1)\delta+\al_0}, \trE_\delta\rb, \trE_{x\delta+\al_1} \rb \\
\quad = \lb \trE_{(y-1)\delta+\al_0}, \trE_{(x+1)\delta+\al_1} \rb \\
\quad\quad +{\frac {\chi(\delta,x\delta+\al_1)} {[2]_q}}
(\trE_{(x+y)\delta}\trE_\delta-
{\frac {\chi((y-1)\delta+\al_0,\delta)} {\chi(\delta,x\delta+\al_1)}}
\trE_\delta \trE_{(x+y)\delta}) \\
\quad = \lb \trE_{(y-1)\delta+\al_0}, \trE_{(x+1)\delta+\al_1} \rb
-{\frac {\ha q^2} {[2]_q}}\lb \trE_\delta, \trE_{(x+y)\delta} \rb \\
\quad =  \trE_{(x+y+1)\delta}
-{\frac {y \ha q^2} {[2]_q}}\lb \trE_\delta, \trE_{(x+y)\delta} \rb.
\end{array}
\end{equation*}
Then we have 
\begin{equation*}
\begin{array}{l}
\lb \trE_{y\delta},\trE_{x\delta} \rb \\
\quad = \lb \trE_{y\delta}, \lb \trE_{\al_0}, \trE_{(x-1)\delta+\al_1} \rb\rb \\
\quad = \lb\lb \trE_{y\delta},  \trE_{\al_0} \rb, \trE_{(x-1)\delta+\al_1} \rb
+\chi(y\delta,\al_0)
\lb \trE_{\al_0}, \lb \trE_{y\delta}, \trE_{(x-1)\delta+\al_1} \rb\rb \\
\quad = -\chi(y\delta,\al_0)\lb\lb \trE_{\al_0},\trE_{y\delta}\rb, \trE_{(x-1)\delta+\al_1} \rb
+\chi(y\delta,\al_0)
\lb \trE_{\al_0}, \lb \trE_{y\delta}, \trE_{(x-1)\delta+\al_1} \rb\rb \\
\quad = \ha^{-y}q^{-2y}
\Bigl(-\lb(q^4-1)\sum_{k=1}^{y-1}\ha^k 
\trE_{k\delta+\al_0}\trE_{(y-k)\delta}+\ha^{y-1}[2]_qE_{y\delta+\al_0}, \trE_{(x-1)\delta+\al_1} \rb \\
\quad\quad + \lb \trE_{\al_0},
(q^4-1)\sum_{k=1}^{y-1}\ha^k 
\trE_{(y-k)\delta}\trE_{(x-1+k)\delta+\al_1}+\ha^{y-1}[2]_qE_{(x-1+y)\delta+\al_1} \rb \Bigr) \\
\quad = -\ha^{-y}q^{-2y}(q^4-1)
\sum_{k=1}^{y-1}\ha^kE_{k\delta+\al_0}
\bigl((q^4-1)\sum_{e=1}^{y-k-1}
\ha^eE_{(y-k-e)\delta}\trE_{(x-1+e)\delta+\al_1}
+\ha^{y-k-1}[2]_qE_{(y-k+x-1)\delta+\al_1}
\bigr) \\
\quad -\ha^{-y}q^{-2y}(q^4-1)
\sum_{k=1}^{y-1}\ha^k\ha^{y-k}q^{2(y-k)}
\trE_{(x+k)\delta}\trE_{(y-k)\delta} 
-\ha^{-1}q^{-2y}[2]_q\lb \trE_{y\delta+\al_0}, \trE_{(x-1)\delta+\al_1}\rb
\\
\quad +\ha^{-y}q^{-2y}(q^4-1)
\sum_{k=1}^{y-1}\ha^k\bigl((q^4-1)\sum_{e=1}^{y-k-1}
\ha^eE_{e\delta+\al_0}\trE_{(y-k-e)\delta}
+\ha^{y-k-1}[2]_qE_{(y-k)\delta+\al_0}\bigr)\trE_{(x-1+k)\delta+\al_1}
\\
\quad +\ha^{-y}q^{-2y}(q^4-1)\sum_{k=1}^{y-1}
\ha^k\ha^{y-k}q^{2(y-k)}\trE_{(y-k)\delta}\trE_{(x+k)\delta}
+\ha^{-1}q^{-2y}[2]_qE_{(x+y)\delta}  \\
\quad = -\ha^{-y}q^{-2y}(q^4-1)^2
\sum_{k=1}^{y-2}\sum_{e=1}^{y-k-1}\ha^{k+e}\trE_{k\delta+\al_0}
\trE_{(y-k-e)\delta}\trE_{(x-1+e)\delta+\al_1} \\
\quad -\ha^{-y}q^{-2y}(q^4-1)\sum_{k=1}^{y-1}\ha^{y-1}[2]_qE_{k\delta+\al_0}\trE_{(y-k+x-1)\delta+\al_1}
\\
\quad -\ha^{-y}q^{-2y}(q^4-1)
\sum_{k=1}^{y-1}\ha^k\ha^{y-k}q^{2(y-k)}
\trE_{(x+k)\delta}\trE_{(y-k)\delta} 
-\ha^{-1}q^{-2y}[2]_q\lb \trE_{y\delta+\al_0}, \trE_{(x-1)\delta+\al_1}\rb
\\
\quad +\ha^{-y}q^{-2y}(q^4-1)^2
\sum_{k=1}^{y-2}\sum_{e=1}^{y-k-1}
\ha^{k+e}\trE_{e\delta+\al_0}\trE_{(y-k-e)\delta}\trE_{(x-1+k)\delta+\al_1} \\
\quad +\ha^{-y}q^{-2y}(q^4-1)
\sum_{k=1}^{y-1}\ha^{y-1}[2]_qE_{(y-k)\delta+\al_0}\trE_{(x-1+k)\delta+\al_1}
\\
\quad +\ha^{-y}q^{-2y}(q^4-1)\sum_{k=1}^{y-1}
\ha^k\ha^{y-k}q^{2(y-k)}\trE_{(y-k)\delta}\trE_{(x+k)\delta}
+\ha^{-1}q^{-2y}[2]_qE_{(x+y)\delta} \\
\quad = 
-(q^4-1)
\sum_{k=1}^{y-1}q^{-2k}
\lb \trE_{(x+k)\delta}, \trE_{(y-k)\delta} \rb \\
\quad 
-\ha^{-1}q^{-2y}[2]_q\lb \trE_{y\delta+\al_0}, \trE_{(x-1)\delta+\al_1}\rb
+\ha^{-1}q^{-2y}[2]_qE_{(x+y)\delta} \\
\quad = 
-(q^4-1)
\sum_{k=1}^{y-1}q^{-2k}
\lb \trE_{(x+k)\delta}, \trE_{(y-k)\delta} \rb
+ yq^{-2(y-1)}\lb \trE_\delta, \trE_{(x+y-1)\delta} \rb
 \\
\quad = 
(q^4-1)
\sum_{k=1}^{y-1}q^{-2k}
\lb \trE_{(y-k)\delta}, \trE_{(x+k)\delta} \rb
+ yq^{-2(y-1)}\lb \trE_\delta, \trE_{(x+y-1)\delta} \rb \\
\quad = 
(q^4-1)q^{-2}
\lb \trE_{(y-1)\delta}, \trE_{(x+1)\delta} \rb
+(q^4-1)
\sum_{k=2}^{y-1}q^{-2k}
\lb \trE_{(y-k)\delta}, \trE_{(x+k)\delta} \rb
+ yq^{-2(y-1)}\lb \trE_\delta, \trE_{(x+y-1)\delta} \rb  \\
\quad = 
(q^4-1)q^{-2}
\lb \trE_{(y-1)\delta}, \trE_{(x+1)\delta} \rb
+q^{-2}(q^4-1)
\sum_{k=1}^{y-1}q^{-2k}
\lb \trE_{(y-1-k)\delta}, \trE_{(x+1+k)\delta} \rb
+ yq^{-2(y-1)}\lb \trE_\delta, \trE_{(x+y-1)\delta} \rb \\
\quad = 
(q^4-1)q^{-2}
\lb \trE_{(y-1)\delta}, \trE_{(x+1)\delta} \rb
+q^{-2}(\lb \trE_{(y-1)\delta}, \trE_{(x+1)\delta} \rb-(y-1)q^{-2(y-2)}\lb \trE_\delta, \trE_{(x+y-1)\delta} \rb) \\
\quad + yq^{-2(y-1)}\lb \trE_\delta, \trE_{(x+y-1)\delta} \rb  \\
\quad = 
q^2
\lb \trE_{(y-1)\delta}, \trE_{(x+1)\delta} \rb
 + q^{-2(y-1)}\lb \trE_\delta, \trE_{(x+y-1)\delta} \rb
\\
\quad = q^{2(y-1)}\lb \trE_\delta,\trE_{(x+y-1)\delta}\rb 
+(\sum_{k=1}^{y-1}q^{2(k-1)}q^{-2(y-k)})\lb \trE_\delta, \trE_{(x+y-1)\delta}\rb \\
\quad =(\sum_{k=1}^y q^{4k-2-2y})\lb \trE_\delta, \trE_{(x+y-1)\delta}\rb \\
\quad =q^{2-2y}{\frac {q^{4y}-1} {q^4-1}}\lb \trE_\delta, \trE_{(x+y-1)\delta}\rb \\
\quad ={\frac {q^{2y}-q^{-2y}} {q^2-q^{-2}}}\lb \trE_\delta, \trE_{(x+y-1)\delta}\rb. 
\end{array}
\end{equation*} Then we have
\begin{equation} \label{eqn:commuEndelta}
\begin{array}{l}
0=\lb \trE_{y\delta},\trE_{\delta} \rb-
{\frac {q^{2y}-q^{-2y}} {q^2-q^{-2}}}\lb \trE_\delta, \trE_{y\delta}\rb \\
\quad = (1+{\frac {q^{2y}-q^{-2y}} {q^2-q^{-2}}})\lb \trE_{y\delta},\trE_{\delta}\rb \\
\quad = {\frac {q^2(q^{2(y-1)}+1)(1-q^{-2(y+1)})} {q^2-q^{-2}}}\lb \trE_{y\delta},\trE_{\delta}\rb.
\end{array}
\end{equation}
Hence, we have
\newline\newline
$\EQseven$ \quad
$\lb \trE_{x\delta},\trE_{y\delta} \rb =0$,
\newline\newline
and 
\newline\newline
$\EQeight$ \quad
$\lb \trE_{x\delta+\al_0},\trE_{y\delta+\al_1} \rb =\trE_{(x+y+1)\delta}.$

\begin{remark}
\label{remark:CndEQseven}
Equation~\eqref{eqn:commuEndelta} tells
that
$\EQseven$ follows mainly from $(q^{2(m+n)}-1)(q^{2(m+n-2)}+1)\ne 0$.
\end{remark}

\subsection{Calculation of $\Delta(\trE_{n\delta})$ and $\Delta(\trE_{n\delta+\al_i})$}

For $x$, $y\in\bZgeqo$, 
define the $\bC$-linear subspaces 
${\mathcal{V}}_{x,y}$, 
of $\trU\otimes\trU$
by
\begin{equation*}
{\mathcal{V}}_{x,y}
=\oplus_{m,n,r,l\in\bZgeqo}\trUp_{m\delta+(x+r)\al_0}\trK_{n\delta+(y+l)\al_1}\otimes \trUp_{n\delta+(y+l)\al_1}.
\end{equation*}

\begin{lemma} \label{lemma:PosCopOne} 
Let $n\in\bN$. Then we have the following{\rm{:}}
\newline
{\rm{(1)}}
\begin{equation} \label{eqn:DelEndl}
\begin{array}{l}
\Delta(\trE_{n\delta})= \\
\quad \trE_{n\delta}\otimes 1
+(q-q^{-1})q^2 a\left(\sum\limits_{k=1}^{n-1}
\trE_{k\delta}\trK_{(n-k)\delta}\otimes \trE_{(n-k)\delta}
\right)
+\trK_{n\delta}\otimes \trE_{n\delta}
+X
\end{array}
\end{equation} for some $X\in{\mathcal{V}}_{1,1}$.
\newline\newline
{\rm{(2)}} 
\begin{equation} \label{eqn:DelEndlalzero}
\begin{array}{l}
\Delta(\trE_{n\delta+\al_0})= \\
\quad \trE_{n\delta+\al_0}\otimes 1
+(q-q^{-1})\left(\sum\limits_{k=0}^{n-1}
(q^2 a)^{-(n-k-1)}
\trE_{k\delta+\al_0}\trK_{(n-k)\delta}\otimes \trE_{(n-k)\delta}
\right)
+\trK_{n\delta+\al_0}\otimes \trE_{n\delta+\al_0}
+Y_0
\end{array}
\end{equation} for some $Y_0\in{\mathcal{V}}_{2,1}$.
\newline\newline
{\rm{(3)}}
\begin{equation} \label{eqn:DelEndlalone}
\begin{array}{l}
\Delta(\trE_{n\delta+\al_1})= \\
\quad \trE_{n\delta+\al_1}\otimes 1
+(q-q^{-1})\left(\sum\limits_{k=0}^{n-1}
(q^2 a)^{-(n-k-1)}
\trE_{(n-k)\delta}\trK_{k\delta+\al_1}\otimes 
\trE_{k\delta+\al_1}\right)
+\trK_{n\delta+\al_1}\otimes \trE_{n\delta+\al_1}
+Y_1
\end{array}
\end{equation} for some $Y_1\in{\mathcal{V}}_{1,2}$.
\newline\newline
{\rm{(4)}} We have
\begin{equation} \label{eqn:NonZeroPosRt}
\trE_{n\delta}\ne 0,\, \trE_{n\delta+\al_0}\ne 0,\,
\trE_{n\delta+\al_1}\ne 0.
\end{equation}
\end{lemma}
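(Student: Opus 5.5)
We prove (1)--(3) simultaneously by induction on $n$, following the recursive definitions $\trE_{n\delta}=\lb\trE_{\al_0},\trE_{(n-1)\delta+\al_1}\rb$ and $\trE_{n\delta+\al_1}=\frac1{[2]_q}\lb\trE_\delta,\trE_{(n-1)\delta+\al_1}\rb$, and deducing (2) from (3) by symmetry. The key structural fact is that $\Delta$ is an algebra map. Write $\Delta(X)=\sum X'\otimes X''$ with $X'\in\trUp_{\lambda'}\trK_{\lambda''}$ and $X''\in\trUp_{\lambda''}$. Then for homogeneous $X\in\trUp_\lambda$ and $Y\in\trUp_\mu$, the $q$-commutator $\Delta(\lb X,Y\rb)=\Delta(X)\Delta(Y)-\chi(\lambda,\mu)\Delta(Y)\Delta(X)$ splits termwise. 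We have $(X'\trK_{\lambda''}\otimes X'')(Y'\trK_{\mu''}\otimes Y'')=\chi(\lambda'',\mu')X'Y'\trK_{\lambda''+\mu''}\otimes X''Y''$, and similarly for the reversed product. Each pair of terms therefore yields either a $q$-commutator in one tensor factor, when the bicharacter factors match, or a combination of the form $X'Y'\otimes X''Y''-cY'X'\otimes Y''X''$.

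The base case $n=1$ is a direct computation. We expand $\Delta(\trE_0)\Delta(\trE_1)-\ha\Delta(\trE_1)\Delta(\trE_0)$. The cross terms $\trK_0\trE_1\otimes\trE_0$ and $\trE_1\trK_0\otimes \trE_0$ combine with coefficient $q_{01}-\ha q_{10}^{-1}\cdot(\ldots)$, which one checks vanishes modulo $\mathcal{V}_{1,1}$. Since $\trE_1\trK_0\otimes\trE_0$ lies in $\mathcal{V}_{1,1}$ anyway, only $\trE_\delta\otimes1+\trK_\delta\otimes\trE_\delta$ survives, in agreement with (1) for $n=1$ (the sum is empty). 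Similarly, $\Delta(\trE_{\al_1})=\trE_1\otimes1+\trK_1\otimes\trE_1$ gives (3) for $n=0$.

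For the inductive step of (3), we compute $\Delta(\lb\trE_\delta,\trE_{(n-1)\delta+\al_1}\rb)$ using (1) for $n=1$ and (3) for $n-1$, keeping only terms outside $\mathcal{V}_{1,2}$. The key bookkeeping point is that $\mathcal{V}_{x,y}$ is stable under multiplication by the leading terms: the second tensor factor only gains $\al_1$-weight and the first only gains $\al_0$-weight. Among the surviving terms we get:
\begin{itemize}
\item from $\trE_\delta\otimes1$ against $\trE_{(n-1)\delta+\al_1}\otimes1$: the term $[2]_q\trE_{n\delta+\al_1}\otimes1$;
\item from $\trK_\delta\otimes\trE_\delta$ against $\trK_{(n-1)\delta+\al_1}\otimes\trE_{(n-1)\delta+\al_1}$: the analogous term with $\trK$;
\item from the mixed terms $\trE_\delta\otimes1$ and $\trK_{(n-1)\delta+\al_1}\otimes\trE_{(n-1)\delta+\al_1}$: the two orders differ by $\chi$-factors, giving $(1-\chi(\delta,(n-1)\delta+\al_1)\chi((n-1)\delta+\al_1,\delta)^{-1}\ldots)$. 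This produces the new $k=n-1$ summand $(q-q^{-1})\trE_\delta\trK_{(n-1)\delta+\al_1}\otimes\trE_{(n-1)\delta+\al_1}$ after dividing by $[2]_q$.
\end{itemize}
The older summands $\trE_{(n-1-k)\delta}\trK_{k\delta+\al_1}\otimes\trE_{k\delta+\al_1}$ are $q$-commuted with $\trE_\delta$ in one factor and $\trK_\delta\otimes\trE_\delta$ in the other. By $\EQseven$ the first-factor $q$-commutator vanishes, and the second-factor one raises $k$ to $k+1$ via the definition. Collecting terms gives exactly the powers $(q^2\ha)^{-(n-k-1)}$. Applying the anti-automorphism $\psi$, extended suitably to $\Delta$ (one uses $\psi\otimes\psi$ composed with the flip and $\trK$-rearrangement, or simply repeats the computation with $\trE_{n\delta+\al_0}=\frac1{[2]_q}\lb\trE_{(n-1)\delta+\al_0},\trE_\delta\rb$), yields (2) in the same way.

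For (1) with $n\ge2$, we use $\trE_{n\delta}=\lb\trE_{\al_0},\trE_{(n-1)\delta+\al_1}\rb$ and (3) for $n-1$. Multiplying the summand $\trE_{(n-1-k)\delta}\trK_{k\delta+\al_1}\otimes\trE_{k\delta+\al_1}$ against $\trK_0\otimes\trE_0$ produces $\trE_{(n-1-k)\delta}\trK_{(k+1)\delta}\otimes\lb\trE_0,\trE_{k\delta+\al_1}\rb=\ldots\otimes\trE_{(k+1)\delta}$. Against $\trE_0\otimes1$ it produces terms in $\mathcal{V}_{1,1}$, or $\lb\trE_0,\trE_{(n-1-k)\delta}\rb$, which by $\EQsix$ are again of admissible shape. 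The main obstacle is exactly this coefficient bookkeeping: verifying that the scalar in front of $\trE_{k\delta}\trK_{(n-k)\delta}\otimes\trE_{(n-k)\delta}$ comes out uniformly as $(q-q^{-1})q^2\ha$, independent of $k$. This requires combining the geometric factors $(q^2\ha)^{-(n-k-1)}$ with the $\chi$-factors $\chi(\al_0,\cdot)$ arising from the reordering, and $\EQfive$/$\EQsix$ must be used to rewrite $\trE_0\trE_{m\delta}$-type products modulo $\mathcal{V}_{1,1}$. I would first check $n=2$ explicitly to pin down the normalization.

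Finally, (4) follows from (1)--(3) and the base case. Apply $\pi\otimes\rmid$, where $\pi$ is the projection onto $\trUo$ along the positive-weight part given by (Ba2), to $\Delta(\trE_{n\delta})$. This kills everything except $\trK_{n\delta}\otimes\trE_{n\delta}$, which alone gives no information. Instead we argue by induction using the leading cross term. If $\trE_{n\delta}=0$, then the coefficient $(q-q^{-1})q^2\ha\,\trE_{\delta}\trK_{(n-1)\delta}\otimes\trE_{(n-1)\delta}$ in the component of bidegree $(\delta,(n-1)\delta)$ must vanish. But this term is nonzero by induction, since $q^2\ne 1$ and the tensor product of nonzero elements is nonzero, giving a contradiction. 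The same argument with the $k=0$ summands handles $\trE_{n\delta+\al_i}$.
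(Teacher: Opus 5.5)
Your strategy (induction on $n$ through the recursive definitions, using that $\Delta$ is an algebra map) is the paper's. However, the inductive step for (3), and likewise for (2), does not work as you describe it.

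Your ``key bookkeeping point'' is false. The leading term $\trE_{(n-1)\delta+\al_1}\otimes 1$ adds $\al_1$-weight to the first factor, so multiplying by it can carry elements of $\mathcal{V}_{1,1}$ outside $\mathcal{V}_{1,2}$. Hence (1) for $n=1$, which determines $\Delta(\trE_\delta)$ only modulo $\mathcal{V}_{1,1}$, is not enough input. You need the exact formula $\Delta(\trE_\delta)=\trE_\delta\otimes 1+(1-q^{-4})\trE_{\al_0}\trK_{\al_1}\otimes\trE_{\al_1}+\trK_\delta\otimes\trE_\delta$. The middle term, which you discarded in your base case, is exactly what creates the new summand. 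Pairing it with $\trE_{(n-1)\delta+\al_1}\otimes 1$ gives $(1-q^{-4})\chi(\al_1,(n-1)\delta+\al_1)\lb\trE_{\al_0},\trE_{(n-1)\delta+\al_1}\rb\trK_{\al_1}\otimes\trE_{\al_1}=(q^2-q^{-2})(q^2\ha)^{-(n-1)}\trE_{n\delta}\trK_{\al_1}\otimes\trE_{\al_1}$. After dividing by $[2]_q$, this is the $k=0$ summand of (3).

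Conversely, the mixed term you credit with the new summand is identically zero. We have $\chi(\delta,\lambda)\chi(\lambda,\delta)=1$ for every $\lambda$; for instance $\chi(\delta,\al_1)\chi(\al_1,\delta)=\ha q^2\cdot\ha^{-1}q^{-2}=1$ and $\chi(\delta,\delta)=1$. Therefore $(\trE_\delta\otimes 1)(\trK_\mu\otimes\trE_\mu)-\chi(\delta,\mu)(\trK_\mu\otimes\trE_\mu)(\trE_\delta\otimes 1)=0$. The summands $k=1,\dots,n-1$, including $k=n-1$, arise only from shifting the old ones by $\trK_\delta\otimes\trE_\delta$, as you correctly say. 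So your collection double-counts $k=n-1$ and never produces $k=0$. Carried out literally at $n=1$, your scheme returns $\Delta(\trE_{\delta+\al_1})\equiv\trE_{\delta+\al_1}\otimes 1+\trK_{\delta+\al_1}\otimes\trE_{\delta+\al_1}$ and misses the required term $(q-q^{-1})\trE_\delta\trK_{\al_1}\otimes\trE_{\al_1}$, which lies outside $\mathcal{V}_{1,2}$.

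The same leak occurs in (2). The middle term of $\Delta(\trE_\delta)$, paired with $\trK_{(n-1)\delta+\al_0}\otimes\trE_{(n-1)\delta+\al_0}$, lands outside $\mathcal{V}_{2,1}$. It yields the $k=0$ summand $(q-q^{-1})(q^2\ha)^{-(n-1)}\trE_{\al_0}\trK_{n\delta}\otimes\trE_{n\delta}$, using $\lb\trE_{(n-1)\delta+\al_0},\trE_{\al_1}\rb=\trE_{n\delta}$ from $\EQeight$. This is why the paper writes out $\Delta(\trE_\delta)$, $\Delta(\trE_{\delta+\al_0})$ and $\Delta(\trE_{\delta+\al_1})$ exactly before inducting. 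All other error terms do behave as you want (for example, $Y_1$ times any term of $\Delta(\trE_\delta)$ stays in $\mathcal{V}_{1,2}$), so the repair is local.

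Smaller points:
\begin{enumerate}
\item In your base case, $\trE_{\al_1}\trK_{\al_0}\otimes\trE_{\al_0}$ is not in $\mathcal{V}_{1,1}$. The cross terms nevertheless cancel exactly, since $\trK_{\al_0}\trE_{\al_1}=\ha\trE_{\al_1}\trK_{\al_0}$.
\item Your treatment of (1) is fine, and the coefficient you flagged as the main obstacle is immediate: $(q^2\ha)^{-(m-1)}\chi(\al_0,m\delta)=q^2\ha$.
\item Your argument for (4) works once (1)--(3) are established. However, the case $\trE_\delta\ne 0$ needs the nonvanishing middle term $(1-q^{-4})\trE_{\al_0}\trK_{\al_1}\otimes\trE_{\al_1}$, because the sum in (1) is empty for $n=1$.
\end{enumerate}
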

\begin{proof}
{\rm{(1)}}-{\rm{(3)}}
We can prove them in induction on $n$
using the following equations.
\begin{equation}\label{eqn;DeltaEdelta}
\begin{array}{l}
\Delta(\trE_\delta)=\Delta(\lb E_0,E_1\rb) \\
\quad = \lb E_0\otimes 1+\trK_{\al_0}\otimes E_0,
E_1\otimes 1+\trK_{\al_1}\otimes E_1 \rb \\
\quad = \trE_\delta\otimes 1+ 
(1-q^{-4})E_0K_{\al_1}\otimes E_1 +\trK_\delta\otimes \trE_\delta \\
\quad = \trE_\delta\otimes 1+ 
q^{-2}(q^2-q^{-2})\trE_{\al_0}\trK_{\al_1}\otimes \trE_{\al_1} +\trK_\delta\otimes \trE_\delta
\end{array}
\end{equation}

\begin{equation*}
\begin{array}{l}
\Delta(\trE_{\delta+\al_0})=
{\frac 1 {[2]}} \Delta(\lb \trE_{\al_0},\trE_\delta \rb) \\
\quad = {\frac 1 {[2]}} \lb E_0\otimes 1+\trK_{\al_0}\otimes E_0,
\Delta(\trE_\delta) \rb \\
\quad = \Bigl( \trE_{\delta+\al_0}\otimes 1 
+
{\frac {q^{-2}(q^2-q^{-2})} {[2]}}(1-q^2a\cdot a^{-1}q^{-4})E_0^2K_{\al_1}\otimes E_1
+{\frac 1 {[2]}} (1-q^2 a\cdot a^{-1}q^{-2}) E_0K_\delta \otimes \trE_\delta \Bigr) \\
\quad \quad +\Bigl( 0+
{\frac {q^{-2}(q^2-q^{-2})} {[2]}}E_0K_\delta\otimes (q^2 E_0E_1-q^2a E_1E_0)
+\trK_{\delta+\al_0}\otimes \trE_{\delta+\al_0}\Bigr) \\
\quad = \trE_{\delta+\al_0}\otimes 1
+q^{-3}(q-q^{-1})^2E_{\al_0}^2K_{\al_1}\otimes \trE_{\al_1}
+(q-q^{-1})\trE_{\al_0}\trK_\delta\otimes \trE_\delta
+\trK_{\delta+\al_0}\otimes \trE_{\delta+\al_0}
\end{array}
\end{equation*}

\begin{equation*}
\begin{array}{l}
\Delta(\trE_{\delta+\al_1})={\frac 1 {[2]}} \Delta(\lb \trE_\delta , \trE_{\al_1} \rb) \\
\quad ={\frac 1 {[2]}} \lb \Delta(\trE_\delta), E_1\otimes 1+\trK_{\al_1}\otimes E_1 \rb \\
\quad = \Bigl(
\trE_{\delta+\al_1}\otimes 1+{\frac {q^2-q^{-2}} {[2]}}\trE_\delta \trK_{\al_1}\otimes \trE_{\al_1}+0
\Bigl)\\
\quad \quad +\Bigl( 0+
{\frac {1-q^{-4}} {[2]}}(1-aq^2\cdot a^{-1}q^{-4})
E_0K_{\al_1}^2\otimes E_1^2
+\trK_{\delta+\al_1}\otimes \trE_{\delta+\al_1} \Bigr) \\
\quad = 
\trE_{\delta+\al_1}\otimes 1
+(q-q^{-1})\trE_\delta \trK_{\al_1}\otimes \trE_{\al_1}
+q^{-3}(q-q^{-1})^2E_{\al_0}\trK_{2\al_1}\otimes \trE_{\al_1}^2
+\trK_{\delta+\al_1}\otimes \trE_{\delta+\al_1} 
\end{array}
\end{equation*}
\newline
{\rm{(4)}} The claim follows from
\eqref{eqn:DsumUpm}
and \eqref{eqn:DelEndl},
\eqref{eqn:DelEndlalzero},
\eqref{eqn:DelEndlalone},
\eqref{eqn;DeltaEdelta}.
\end{proof}

\subsection{Modifications $\tE_{k\delta}$ of $\trE_{k\delta}$}

\begin{lemma} \label{lemma:mcUmcUpr}
Let ${\mathcal{G}}$ be 
the $\bC$-subalgebra (with $1$) of $\trUp$
generated by $\trE_{p\delta}$ $(p\in\bN)$.
Let ${\mathcal{G}}^\prime:=\sum_{u=0}
^\infty{\mathcal{G}}\trE_{u\delta+\al_1}$
$(\subset \trUp)$.
Then ${\mathcal{G}}^\prime=
\oplus_{k=0}^\infty{\mathcal{G}}\trE_{k\delta+\al_1}$
as a $\bC$-linear subspace of $\trUp$.
Moreover for every $k\in\bZgeqo$,
the map 
$f_k:{\mathcal{G}}\to {\mathcal{G}}\trE_{k\delta+\al_1}$
defined by 
$f_k(X):=XE_{k\delta+\al_1}$
$(X\in{\mathcal{G}})$ is 
a $\bC$-linear isomorphism.
\end{lemma}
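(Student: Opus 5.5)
We prove both assertions using the coproduct formulas of Lemma~\ref{lemma:PosCopOne}, extracting from $\Delta$ a component that isolates the factor $\trE_{k\delta+\al_1}$ in the second tensor slot. Since $\mathcal{G}\subset\bigoplus_{p}\trUp_{p\delta}$ and $\trE_{k\delta+\al_1}\in\trUp_{k\delta+\al_1}$, the weights of the summands $\mathcal{G}\trE_{k\delta+\al_1}$ overlap for different $k$. So weight considerations alone do not give directness, and we need an independent invariant.

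\textbf{Step 1 (a projection).} For a homogeneous $Y\in\trUp_\lambda$, write $\Delta(Y)$ using the decomposition $\trU=\oplus\trU_\mu$ in each tensor factor. Define $\pi_{\lambda-\beta,\beta}(Y)$ as the component of $\Delta(Y)$ lying in $\trUp_{\lambda-\beta}\trUo\otimes\trUp_\beta$. Set
\[
G(k):=\{\,X\in\mathcal{G}\,\}\quad\text{and}\quad \mathcal{V}':=\text{span of terms }A\,\trK_{\mu}\otimes B \text{ with } B\in\trUp_{s\delta}\ (s\ge0).
\]

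\textbf{Step 2 (coproduct of an element of $\mathcal{G}$).} By Lemma~\ref{lemma:PosCopOne}(1), every $X\in\mathcal{G}\cap\trUp_{p\delta}$ satisfies
\[
\Delta(X)=X\otimes1+(\text{terms with second factor in }\trUp_{s\delta},\ s\ge1)+X',
\]
where $X'\in\mathcal{V}_{1,1}$. In particular, the second factors of the non-$\mathcal{V}_{1,1}$ terms lie in $\mathcal{G}$-weights $s\delta$ only.

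\textbf{Step 3 (extracting $\trE_{k\delta+\al_1}$).} By Lemma~\ref{lemma:PosCopOne}(3),
\[
\Delta(\trE_{k\delta+\al_1})=\cdots+\trK_{k\delta+\al_1}\otimes\trE_{k\delta+\al_1}+\cdots,
\]
and all other terms have second factor of weight $j\delta+\al_1$ with $j<k$, or $0$, or lie in $\mathcal{V}_{1,2}$. Consider
\[
Z=\sum_{u=0}^{M}X_u\trE_{u\delta+\al_1}=0,\qquad X_u\in\mathcal{G},\ X_u\ne0 \text{ for some } u.
\]
Let $k$ be the largest index with $X_k\ne0$. Apply $\Delta$ and take the component whose first factor lies in $\mathcal{G}\trUo$ and whose second factor lies in $\trUp_{k\delta+\al_1}$, modulo contributions containing an extra $\al_0$ or $\al_1$ in the first slot.

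From the product $\Delta(X_u)\Delta(\trE_{u\delta+\al_1})$, the second-slot weight $k\delta+\al_1$ with first slot of pure $\delta$-type arises only in two ways: from $X_u\otimes1$ times $\trK_{u\delta+\al_1}\otimes\trE_{u\delta+\al_1}$, which forces $u=k$; or from mixed terms with $u<k$. The mixed terms have second slot $\trUp_{s\delta}\trE_{j\delta+\al_1}$ with $s+j=k$ and $s\ge1$. I would order these by $s$, i.e. refine by the number $s$ coming from $\mathcal{G}$, and separate the $s=0$ part.

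The cleanest route is to take the component of the first tensor factor in weight exactly $p\delta$ with $p=\text{wt}(X_k)$, using the top degree $k+p$. Weight bookkeeping then shows that only $X_k\trK_{k\delta+\al_1}\otimes\trE_{k\delta+\al_1}$ survives. This component equals zero, while $\trE_{k\delta+\al_1}\ne0$ by \eqref{eqn:NonZeroPosRt}. Hence $X_k\trK\otimes\trE_{k\delta+\al_1}\ne0$, which is a contradiction. This gives directness.

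\textbf{Step 4 (injectivity of $f_k$).} Injectivity of $f_k$ is the single-term case of the same argument: if $X\trE_{k\delta+\al_1}=0$, then the projection gives $X\trK_{k\delta+\al_1}\otimes\trE_{k\delta+\al_1}=0$, so $X=0$ by (Ba2). Surjectivity holds by definition.

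The main obstacle is Step 3: ensuring that the mixed cross terms cannot reproduce the top component. This requires choosing the grading carefully. I would use the bigrading (first-slot weight, second-slot weight) together with a lexicographic comparison on the total degree of the $X_u$, and reduce to homogeneous $X_u$, so that a maximal pair $(k,\deg X_k)$ is isolated.
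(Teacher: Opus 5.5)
\textbf{There is a genuine gap in Step~3, and it also breaks Step~4.} The component you extract (first slot of weight $p\delta$, second slot of weight $k\delta+\al_1$) does not isolate $X_k\trK_{k\delta+\al_1}\otimes\trE_{k\delta+\al_1}$, and no refinement of the weight bookkeeping will make it do so.

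First, take the pieces of $\Delta(X_u)$ whose second slot lies in $\mathcal{G}\cap\trUp_{s\delta}$ with $s\geq 1$. Multiply them by the terms of $\Delta(\trE_{u\delta+\al_1})$ whose second slot is $\trE_{j\delta+\al_1}$ with $s+j=k$. The products land in exactly your bi-weight, with second slot in $(\mathcal{G}\cap\trUp_{s\delta})\trE_{j\delta+\al_1}$. This happens for every $u\le k$, including $u=k$. Already for the single term $X=\trE_\delta$, $k=1$, the component is $\trE_\delta\trK_{\delta+\al_1}\otimes(\trE_{\delta+\al_1}+(q-q^{-1})\trE_\delta\trE_{\al_1})$, not $\trE_\delta\trK_{\delta+\al_1}\otimes\trE_{\delta+\al_1}$ as Step~4 claims.

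Second, the $\mathcal{V}_{1,1}$-part of $\Delta(X_u)$ with second slot of weight $k\delta+\al_1$, times $\trE_{u\delta+\al_1}\otimes 1$, lands there too, and its first slot has pure $\delta$-weight. So ``modulo an extra $\al_0$ or $\al_1$ in the first slot'' removes nothing. Also, ``the component whose first factor lies in $\mathcal{G}\trUo$'' is not a well-defined projection: no complement of $\mathcal{G}$ in $\trUp_{p\delta}$ is available before the PBW theorem, and that theorem rests on this lemma.

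In the extreme case $X_k\in\bC$ you have $p=0$, and the extracted component is literally $\trK_{k\delta+\al_1}\otimes Z$, which carries no information. Separating $\trE_{k\delta+\al_1}$ from $\sum_{s\geq 1}(\mathcal{G}\cap\trUp_{s\delta})\trE_{(k-s)\delta+\al_1}$ in the second slot is essentially the directness you are trying to prove.

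\textbf{The missing idea is to reverse the roles of the two slots, as the paper does.} Send the coefficient to the second slot via the term $\trK_{m\delta}\otimes X$ of $\Delta(X)$, and keep the root vector in the first slot via $\trE_{k\delta+\al_1}\otimes 1$. Grade the second slot by the excess $y-x$ of a weight $x\al_0+y\al_1$.
\begin{itemize}
\item By Lemma~\ref{lemma:PosCopOne}(3), every term of $\Delta(\trE_{k\delta+\al_1})$ other than $\trE_{k\delta+\al_1}\otimes 1$ has second-slot excess $\geq 1$.
\item By Lemma~\ref{lemma:PosCopOne}(1), every term of $\Delta(X)$, $X\in\mathcal{G}$, outside $\bigoplus_{b,c}(\mathcal{G}\cap\trUp_{b\delta})\trK_{c\delta}\otimes(\mathcal{G}\cap\trUp_{c\delta})$ has excess $\geq 1$.
\item Hence the excess-zero part of $\Delta(\sum_t X_t\trE_{k_t\delta+\al_1})$ is $\sum_t\Delta(X_t)_0(\trE_{k_t\delta+\al_1}\otimes 1)$, where $\Delta(X_t)_0$ is the excess-zero part of $\Delta(X_t)$.
\item Fix the total weight $N\delta+\al_1$, so $X_t\in\trUp_{(N-k_t)\delta}$, and take the \emph{smallest} $k_t$. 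The component with second slot of weight $(N-k_t)\delta$ is then exactly $\trK_{(N-k_t)\delta}\trE_{k_t\delta+\al_1}\otimes X_t$, because terms with larger $k_{t'}$ cannot reach that second-slot degree.
\item So $X_t=0$ by \eqref{eqn:NonZeroPosRt}, and you induct on the number of terms.
\end{itemize}
Choosing the largest $k_t$ here would again let cross terms from $\Delta(X_{t'})$ with $k_{t'}<k_t$ into the same component. The paper's write-up absorbs these into its remainder, and it is the triangular choice of the smallest $k_t$ that justifies this. Injectivity of $f_k$ is then the one-term case, where there are no cross terms at all.
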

\begin{proof}
By \eqref{eqn:NonZeroPosRt},
we have $\trE_{k\delta+\al_1}\ne 0$
$(k\in\bZgeqo)$.

Let $r\in\bN$ and
$k_t\in\bZgeqo$,
$X_t\in{\mathcal{G}}$
$(t\in\fkJ_{1,r})$,
Assume that $k_t<k_{t+1}$
$(t\in\fkJ_{1,r-1})$
and $\sum_{t=1}^r X_tE_{k_t\delta+\al_1}=0$.
For $t\in\fkJ_{1,r}$,
let $l_t\in\bN$, $m_{t,u}\in\bZgeqo$, 
$X_{t,u}\in{\mathcal{G}}\cap \trUp_{m_{t,u}\delta}$ 
$(u\in\fkJ_{1,l_t})$
be such that $m_{t,u}<m_{t,u+1}$ $(u\in\fkJ_{1,l_t-1})$
and 
$X_t=\sum_{u=1}^{l_t}X_{t,u}$.

By \eqref{eqn:DelEndl} and \eqref{eqn:DelEndlalone},
we have
\begin{equation*}
\begin{array}{l}
0=\Delta(0)=
\Delta(\sum_{t=1}^r X_tE_{k_t\delta+\al_1}) \\
\quad
= \sum_{t=1}^r\sum_{u=1}^{l_t}
\trE_{k_t\delta+\al_1}\trK_{m_{t,u}\delta}\otimes X_{t,u}
+Y
\end{array}
\end{equation*}
for some $Y\in{\mathcal{V}}_{0,1}$.
Hence $X_{t,u}=0$
$(t\in\fkJ_{1,r},\,u\in\fkJ_{1,l_t})$,
which implies $X_t=0$ $(t\in\fkJ_{1,r})$.
Hence the statement holds.
\end{proof}

Let $z$ be an indeterminate. 
In $\bC[[z]]$, we have:
\begin{align}
& \log(1+z)=\sum_{n=1}^\infty (-1)^{n+1}{\frac {z^n} n},\quad
\log(1-z)=-\sum_{n=1}^\infty {\frac {z^n} n}, \\
& \exp(\sum_{n=1}^\infty {\frac {z^n} n})={\frac 1 {1-z}}
=\sum_{k=0}^\infty z^k,\quad
\exp(-\sum_{n=1}^\infty {\frac {z^n} n})=1-z.
\end{align}

Let
\begin{equation*}
A=1+(q-q^{-1})\sum_{k=1}^\infty \trE_{k\delta}
(q^2a)^{-(k-1)}z^k,
\end{equation*} where
$z$ is an indeterminate.
Define $\tE_k\in \trUp_{k\delta}$ $(k\in\bN)$ by
\begin{equation*}
A=\exp((q-q^{-1})\sum_{k=1}^\infty\tE_{k\delta}z^k).
\end{equation*}
Taking derivative with respect to $z$, we have
 \begin{equation*}
 (q-q^{-1})\sum_{k=1}^\infty \trE_{k\delta}
(q^2a)^{-(k-1)}kz^{k-1}
=((q-q^{-1})\sum_{k=1}^\infty\tE_{k\delta}kz^{k-1})\cdot A
\end{equation*} which implies 
\begin{equation}
\label{eqn;DifEkdeltilEkdel}
(q^2a)^{-(k-1)}kE_{k\delta}
=k\tE_{k\delta}+(q-q^{-1})
\sum_{r=1}^{k-1}r(q^2a)^{-(k-r-1)}
\tE_{r\delta}\trE_{(k-r)\delta}
\end{equation} 
\begin{lemma} \label{lemma:expanA}
We have 
\newline\newline
$\EQnine$ \quad\quad\quad\quad\quad\quad\quad\quad  $\lb \tE_{k\delta}, \trE_{n\delta+\al_1}\rb
= {\frac {[2k]_q} k} \trE_{(n+k)\delta+\al_1}$, 
\newline\newline
$\EQten$ \quad\quad\quad\quad\quad\quad\quad\quad  $\lb \trE_{n\delta+\al_0}, \tE_{k\delta}\rb
= {\frac {[2k]_q} k} \trE_{(n+k)\delta+\al_0}$,
\newline\newline for $k\in\bN$ and $n\in\bZgeqo$.
\end{lemma}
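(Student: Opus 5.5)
The plan is to prove $\EQnine$ by a generating-function argument that turns the multiplicative relation $\EQfive$ into the additive relation for the $\tE_{k\delta}$. Then $\EQten$ follows by applying $\psi$.

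\emph{Setting up.} Put $c:=\ha q^2$. From $\chi(\delta,\delta)=q_{00}q_{01}q_{10}q_{11}=1$ and $\chi(\delta,\al_1)=c$ we get $\chi(k\delta,n\delta+\al_1)=c^k$. Write $Y_n:=\trE_{n\delta+\al_1}$, and let $S$ be the shift $Y_n\mapsto Y_{n+1}$, extended by left ${\mathcal{G}}$-linearity to ${\mathcal{G}}^\prime$. This is well defined by Lemma~\ref{lemma:mcUmcUpr}, which says ${\mathcal{G}}^\prime=\oplus_k{\mathcal{G}}Y_k$ with each $f_k$ injective. For $X\in{\mathcal{G}}\cap\trUp_{k\delta}$ define operators on ${\mathcal{G}}^\prime[[z]]$ by $L_X(W)=XW$ and $R_X(W)=c^{k}WX$. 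Then $\lb X,W\rb=(L_X-R_X)W$ for $W$ of weight $n\delta+\al_1$.

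By $\EQseven$ all $\trE_{k\delta}$, and hence all $\tE_{k\delta}$, commute. So all $L$'s and $R$'s commute with each other, and the scaling $c^k$ in $R_X$ is absorbed by substituting $z\mapsto cz$. It follows that
\[
A(z)\,W\,A(cz)^{-1}=\exp\Bigl((q-q^{-1})\sum_{k\ge1}z^k\,(L_{\tE_{k\delta}}-R_{\tE_{k\delta}})\Bigr)W .
\]
Hence $\EQnine$ is equivalent to the single identity
\[
A(z)\,Y_n\,A(cz)^{-1}=\exp\Bigl((q-q^{-1})\sum_{k\ge1}\tfrac{[2k]_q}{k}\,z^kS^k\Bigr)Y_n .
\]
One still has to check that $S$ commutes with the relevant operators; this holds because ${\mathcal{G}}$ is commutative and $S$ is ${\mathcal{G}}$-linear. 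The exponential on the right is evaluated by $\exp\bigl(\sum_k\frac{(q^{2k}-q^{-2k})}{k}x^k\bigr)=\frac{1-q^{-2}x}{1-q^{2}x}$, with $x=zS$ up to the scaling coming from the factor $(q^2\ha)^{-(k-1)}$ built into $A$. Once the right-hand side is identified with the left-hand side, comparing coefficients of $z^k$ in the logarithms and using the injectivity of Lemma~\ref{lemma:mcUmcUpr} gives $\EQnine$.

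\emph{Key step (expected obstacle).} The heart of the proof is to derive the identity above, in the multiplied-out form
\[
A(z)Y_n=\Phi(zS)\,Y_n\cdot A(cz)
\]
with $\Phi$ an explicit rational series, directly from $\EQfive$. I would multiply $\EQfive$ by $(q-q^{-1})(q^2\ha)^{-(r-1)}z^r$ and sum over $r$. The term $\ha^{r-1}[2]_qY_{n+r}$ gives a geometric series in $zS$. The sum $(q^4-1)\sum_k\ha^k\trE_{(r-k)\delta}Y_{n+k}$ combines with $A(z)$ itself into a product of $A(z)$ with a geometric series in $zS$. This rearranges to $A(z)Y_n\,(1-\beta zS)=(1-\gamma zS)\,Y_nA(cz)$-type relations, for suitable $\beta,\gamma$ built from $q^{\pm2}$. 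I expect the bookkeeping of the powers of $\ha$ and $q$ to be the fiddly part. Here I must make sure the factors $(q^2\ha)^{-(k-1)}$ in $A$ exactly cancel the $\ha^k$ and the $c^r$, so that the surviving ratio is $\frac{1-q^{-2}x}{1-q^2x}$. The normalization of $\tE$ via $\exp((q-q^{-1})\cdot)$ is designed precisely so this works. As a sanity check I would confirm the $k=1$ case: $\tE_\delta=\trE_\delta$ and $\lb\trE_\delta,Y_n\rb=[2]_qY_{n+1}$ by definition. I would also check $k=2$ against $\tilde E_{2\delta}$ and \eqref{eqn:tilEtwodelEndelalone}, using \eqref{eqn;DifEkdeltilEkdel}.

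\emph{$\EQten$.} By $\EQeight$, $\psi(\trE_{k\delta})=\lb\trE_{(k-1)\delta+\al_0},\trE_{\al_1}\rb=\trE_{k\delta}$, so $\psi$ fixes $A$ and hence each $\tE_{k\delta}$. Applying the anti-automorphism $\psi$ to $\EQnine$ turns $\lb\tE_{k\delta},\trE_{n\delta+\al_1}\rb$ into $\lb\trE_{n\delta+\al_0},\tE_{k\delta}\rb$; this uses $\chi(k\delta,n\delta+\al_1)=\chi(n\delta+\al_0,k\delta)$. The result is $\EQten$.
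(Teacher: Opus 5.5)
Your proposal is correct and is essentially the paper's proof. The paper also uses the shift on ${\mathcal{G}}^\prime$ (well defined by Lemma~\ref{lemma:mcUmcUpr}), the commutativity $\EQseven$, and $\EQfive$ to identify $A^{-1}\trE_{u\delta+\al_1}\trL_{\al_1}^{-1}A$ with $\bigl(\frac{1-q^{2}zT}{1-q^{-2}zT}\trE_{u\delta+\al_1}\bigr)\trL_{\al_1}^{-1}$, and then reads $\EQnine$ off the logarithm; there, right multiplication by $\trL_{\al_1}^{-1}$ plays the role of your rescaling $z\mapsto \ha q^2 z$, and the conjugation runs in the inverse direction to yours.

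The only difference is organizational. The paper checks the coefficient of $z^n$ by induction on $n$, assuming $\EQnine$ for smaller $k$, whereas summing $\EQfive$ as you propose gives the closed identity $(1-q^{2}zS)\bigl(A(z)\trE_{n\delta+\al_1}\bigr)=(1-q^{-2}zS)\bigl(\trE_{n\delta+\al_1}A(\ha q^2z)\bigr)$ in one stroke. The powers of $\ha$ and $q$ cancel exactly as you anticipate, with no extra rescaling of $zS$. The comparison of logarithms is then justified by the left ${\mathcal{G}}$-linearity of all operators involved, so you never need the shift to commute with right multiplication, which ${\mathcal{G}}$-linearity alone would not give.
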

\begin{proof}
We prove $\EQnine$.
Let ${\mathcal{G}}$,
${\mathcal{G}}^\prime$
be those of Lemma~\ref{lemma:mcUmcUpr}.
We prove $[\tE_{k\delta},\trE_{\al_1}\trL_{\al_1}^{-1}]
= {\frac {[2k]_q} k} \trE_{k\delta+\al_1}\trL_{\al_1}^{-1}$.
Define the $\bC$-linear homomorphism 
$T:{\mathcal{G}}^\prime\to{\mathcal{G}}^\prime$
by
\begin{equation*}
T(XE_{\al_1+u\delta}):=XE_{\al_1+(u+1)\delta}
\quad (u\in\bZgeqo,\,
X
\in{\mathcal{G}}).
\end{equation*} 
Define the $\bC$-linear homomorphism 
$T^\prime:{\mathcal{G}}^\prime \trL_{\al_1}^{-1}
\to{\mathcal{G}}^\prime \trL_{\al_1}^{-1}$
by $T^\prime(YL_{\al_1}^{-1}):=T(Y)\trL_{\al_1}^{-1}$
$(Y\in{\mathcal{G}}^\prime)$.
Notice that
$T^\prime({\mathrm{ad}}\tE_{k\delta})_{|{\mathcal{G}}^\prime}
=({\mathrm{ad}}\tE_{k\delta})_{|{\mathcal{G}}^\prime}T^\prime$.
Fix $n\in\bN$, and
assume we have proved 
$\lb \tE_{k\delta}, \trE_{u\delta+\al_1}\rb={\frac {[2k]_q} k}T^kE_{\al_1+u\delta}$
for $k\in\fkJ_{0,n-1}$
and $u\in\bZgeqo$.
Let $\equiv_n$ mean the equality modulo $z^{n+1}$.
By an induction on $n$, we have:
\begin{align*}
& A^{-1}\trE_{\al_1+u\delta}\trL_{\al_1}^{-1}A \\
 & \equiv_n 
(\exp\left(-(q-q^{-1})\sum_{k=1}^\infty z^k
{\mathrm{ad}}\tE_{k\delta}\right)(\trE_{\al_1+u\delta}\trL_{\al_1}^{-1}) \\
 & \equiv_n 
(\exp\left(-\sum_{k=1}^\infty z^k{\frac {q^{2k}-q^{-2k}} k} T^k\right)\trE_{\al_1+u\delta})\trL_{\al_1}^{-1} \\
& \quad\quad
+z^n{\frac {q^{2n}-q^{-2n}} n} T^nE_{\al_1+u\delta}\trL_{\al_1}^{-1}
-z^n(q-q^{-1}){\mathrm{ad}}\tE_{n\delta}(\trE_{\al_1+u\delta}\trL_{\al_1}^{-1}) \\
& \equiv_n
\left((1-q^2 z T)(\sum_{r=0}^\infty z^rq^{-2r}T^r)\trE_{\al_1+u\delta}-z^n(q-q^{-1})(\lb\tE_{n\delta},\trE_{\al_1+u\delta}\rb-
{\frac {[2n]_q} n} T^nE_{\al_1+u\delta})\right)\trL_{\al_1}^{-1} \\
& \equiv_n
\left(\trE_{\al_1+u\delta}+\sum_{r=1}^\infty z^r(q^{-2r}-q^{2-2(r-1)})T^rE_{\al_1+u\delta}
-z^n(q-q^{-1})(\lb\tE_{n\delta},\trE_{\al_1+u\delta}\rb-
{\frac {[2n]_q} n} T^nE_{\al_1+u\delta})\right)\trL_{\al_1}^{-1}  \\
& \equiv_n
\left(\trE_{\al_1+u\delta}+\sum_{r=1}^\infty z^rq^{-2r}(1-q^4)T^rE_{\al_1+u\delta}-z^n(q-q^{-1})(\lb\tE_{n\delta},\trE_{\al_1+u\delta}\rb-
{\frac {[2n]_q} n} T^nE_{\al_1+u\delta})\right)\trL_{\al_1}^{-1}.
\end{align*}
Hence 
\begin{align*}
& \trE_{\al_1+u\delta}\left(1+(q-q^{-1})\sum_{k=1}^\infty \trE_{k\delta}
q^2\ha z^k\right)\trL_{\al_1}^{-1} \\
& \quad \equiv_n 
\trE_{\al_1+u\delta}\trL_{\al_1}^{-1}\left(1+(q-q^{-1})\sum_{k=1}^\infty \trE_{k\delta}
(q^2\ha)^{-(k-1)}z^k\right) \\
& \quad \equiv_n \trE_{\al_1+u\delta}\trL_{\al_1}^{-1} A
\\
& \quad \equiv_n 
A\left(\trE_{\al_1+u\delta}+\sum_{r=1}^\infty z^rq^{-2r}(1-q^4)T^rE_{\al_1+u\delta}-z^n(q-q^{-1})(\lb\tE_{n\delta},\trE_{\al_1+u\delta}\rb-
{\frac {[2n]_q} n} T^nE_{\al_1+u\delta})\right)\trL_{\al_1}^{-1}
\\
& \quad \equiv_n 
\left(1+(q-q^{-1})\sum_{k=1}^\infty \trE_{k\delta}
(q^2\ha)^{-(k-1)}z^k\right) \\
& \quad\quad\quad\quad
\cdot\left(\trE_{\al_1+u\delta}+\sum_{r=1}^\infty z^rq^{-2r}(1-q^4)T^rE_{\al_1+u\delta}-z^n(q-q^{-1})(\lb\tE_{n\delta},\trE_{\al_1+u\delta}\rb-
{\frac {[2n]_q} n} T^nE_{\al_1+u\delta})\right)\trL_{\al_1}^{-1}.
\end{align*}
Hence 
\begin{align*}
&-(q-q^{-1})(q^2\ha)^{-(n-1)}\lb \trE_{n\delta}, \trE_{\al_1+u\delta} \rb \\
& \quad =
q^{-2n}(1-q^4)T^nE_{\al_1+u\delta}
+(q-q^{-1})\sum_{k=1}^{n-1}
(q^2\ha)^{-(k-1)}q^{-2(n-k)}(1-q^4)\trE_{k\delta}T^{n-k}\trE_{\al_1+u\delta} \\
& \quad\quad
-(q-q^{-1})(\lb\tE_{n\delta},\trE_{\al_1+u\delta}\rb-
{\frac {[2n]_q} n} T^nE_{\al_1+u\delta}).
\end{align*}
By $\EQfive$, we have $\EQnine$.
\end{proof}

\subsection{Calculation of $\Delta(\tE_{n\delta})$}

\begin{lemma} \label{lemma:PosCopTwo}
We have
\begin{equation} \label{eqn:DeltEndel}
\Delta(\tE_{n\delta})
=\tE_{n\delta}\otimes 1
+\trK_{n\delta}\otimes \tE_{n\delta}+X_n\quad (n\in\bN)
\end{equation} for some $X_n\in{\mathcal{V}}_{1,1}$.
\end{lemma}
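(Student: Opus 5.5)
We work with generating functions. The plan is to show that the leading part of $\Delta(A)$ factors as $A\otimes$-type products, and then pass through $\log$.

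First, record that $\mathcal{V}_{1,1}$ absorbs products with the other terms. Set $\mathcal{W}=\oplus_{m,n}\trUp_{m\delta}\trK_{n\delta}\otimes\trUp_{n\delta}$. Every summand of $\mathcal{V}_{1,1}$ has first tensor factor of degree $m\delta+(1+r)\al_0$, so it contains a genuine $\al_0$-excess and an $\al_1$-excess. Hence
\[
\mathcal{W}\mathcal{V}_{1,1}\subset\mathcal{V}_{1,1},\quad \mathcal{V}_{1,1}\mathcal{W}\subset\mathcal{V}_{1,1},\quad \mathcal{V}_{1,1}\mathcal{V}_{1,1}\subset\mathcal{V}_{1,1}.
\]
This should follow from the weight bookkeeping, using $\trK_\lambda\trUp_\mu=\trUp_\mu\trK_\lambda$ up to scalar. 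The whole argument is then carried out modulo $\mathcal{V}_{1,1}[[z]]$, which is a two-sided ideal-like subspace for the subalgebra $\mathcal{W}[[z]]+\mathcal{V}_{1,1}[[z]]$.

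Second, rewrite \eqref{eqn:DelEndl} with the normalization of $A$. Put $e_k=(q-q^{-1})(q^2a)^{-(k-1)}\trE_{k\delta}$, so that $A=1+\sum_k e_kz^k$. Multiplying \eqref{eqn:DelEndl} by $(q-q^{-1})(q^2a)^{-(n-1)}$ turns the middle coefficient $(q-q^{-1})q^2a$ into exactly the product
\[
(q-q^{-1})(q^2a)^{-(k-1)}\cdot(q-q^{-1})(q^2a)^{-(n-k-1)},
\]
since $(q^2a)^{-(n-1)}\cdot q^2a=(q^2a)^{-(k-1)}(q^2a)^{-(n-k-1)}$. Hence
\[
\Delta(e_n)\equiv\sum_{k=0}^{n}e_k\trK_{(n-k)\delta}\otimes e_{n-k}\pmod{\mathcal{V}_{1,1}},
\]
with $e_0=1$. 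In generating form this reads $\Delta(A)\equiv A_1A_2\pmod{\mathcal{V}_{1,1}[[z]]}$, where $A_1=\sum_k e_kz^k\otimes1$ and $A_2=\sum_k\trK_{k\delta}z^k\otimes e_k$. Both $A_1$ and $A_2$ lie in $\mathcal{W}[[z]]$.

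Third, pass to $\log$. Since $\Delta$ is an algebra map, $\Delta(A)=\exp\bigl((q-q^{-1})\sum_k\Delta(\tE_{k\delta})z^k\bigr)$. By $\EQseven$, all $\trE_{k\delta}$ commute. Because $\trK_{n\delta}$ commutes with $\trUp_{m\delta}$ (as $\chi(n\delta,m\delta)=1$), the factors $A_1$ and $A_2$ also commute, so
\[
\log(A_1A_2)=\log A_1+\log A_2=(q-q^{-1})\sum_k\bigl(\tE_{k\delta}\otimes1+\trK_{k\delta}\otimes\tE_{k\delta}\bigr)z^k.
\]
For the second equality, note that $k\mapsto\trK_{k\delta}z^k$ is multiplicative, so applying $\log$ to $A_2$ just transports the expansion of $\log A$ into $\trK$-twisted form. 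It remains to show that $\log(\Delta A)\equiv\log(A_1A_2)$ modulo $\mathcal{V}_{1,1}[[z]]$. Write $\Delta A=A_1A_2+V$ with $V\in\mathcal{V}_{1,1}[[z]]$ and $V$ of $z$-order at least $1$. Then expand $\log$ as a power series in $(\Delta A-1)$ and use the ideal-like property from the first step: every monomial containing $V$ lies in $\mathcal{V}_{1,1}$. Comparing coefficients of $z^n$ gives \eqref{eqn:DeltEndel}.

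The main obstacle is that the full images $\Delta(\tE_{k\delta})$ need not commute with each other modulo nothing, so the identity $\log(XY)=\log X+\log Y$ must only be applied to the commuting leading parts. The log series must therefore be expanded honestly as a noncommutative power series in $(A_1A_2-1)+V$. The key check is the closure of $\mathcal{V}_{1,1}$ under left and right multiplication by $\mathcal{W}$, which again reduces to weight counting.
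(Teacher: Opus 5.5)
Your argument is correct and is essentially the paper's proof. The paper also compares $\Delta(A)$ modulo $\mathcal{V}_{1,1}$ with the product of $A\otimes 1$ and the $\trK$-twisted copy $\sum_k z^k\trK_{k\delta}\otimes e_k$ of $A$; the difference is that it inverts $\exp$ coefficient by coefficient by induction on $n$ instead of applying $\log$ once, and it uses without comment the absorption property $\mathcal{W}\mathcal{V}_{1,1}+\mathcal{V}_{1,1}\mathcal{W}+\mathcal{V}_{1,1}\mathcal{V}_{1,1}\subset\mathcal{V}_{1,1}$ that you state explicitly (and your appeal to $\EQseven$ is unnecessary, since $A_1$ and $A_2$ commute already because $\chi(\delta,\delta)=1$).
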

\begin{proof}
Fix $n\in\bN$, and
assume we have proved 
\eqref{eqn:DeltEndel}
with $k$ in place of $n$
for $k\in\fkJ_{1,n-1}$.
Let $\equiv_n$ mean the equality modulo 
$z^{n+1}\bC[[z]]{\mathcal{V}}_{0,0}
+\bC[[z]]{\mathcal{V}}_{1,1}$.
By an induction on $n$, we have:
\begin{align*}
&1+(q-q^{-1})\sum_{k=1}^\infty 
(q^2a)^{-(k-1)}z^k \Delta(\trE_{k\delta}) \\
& \quad \equiv_n 
\exp((q-q^{-1})\sum_{k=1}^\infty z^k\Delta(\tE_{k\delta}))
\\ 
& \quad \equiv_n 
\exp((q-q^{-1})\sum_{k=1}^\infty z^k
(\tE_{k\delta}\otimes 1
+\trK_{k\delta}\otimes \tE_{k\delta})) \\
& \quad\quad\quad\quad  
+(q-q^{-1})z^n(\Delta(\tE_{n\delta})
-(\tE_{n\delta}\otimes 1
+\trK_{n\delta}\otimes \tE_{n\delta}))) \\
& \quad \equiv_n 
\exp((q-q^{-1})\sum_{k=1}^\infty z^k
(\tE_{k\delta}\otimes 1))\cdot
\exp((q-q^{-1})\sum_{k=1}^\infty z^k(\trK_{\delta}^k\otimes \tE_{k\delta})) \\
& \quad\quad\quad\quad  
+(q-q^{-1})z^n(\Delta(\tE_{n\delta})
-(\tE_{n\delta}\otimes 1
+\trK_{n\delta}\otimes \tE_{n\delta}))) \\
& \quad \equiv_n 
\left(1+(q-q^{-1})\sum_{k=1}^\infty 
(q^2a)^{-(k-1)}z^k (\trE_{k\delta}\otimes 1)\right) \\
& \quad\quad\quad\quad \cdot
\left(1+(q-q^{-1})\sum_{k=1}^\infty 
(q^2a)^{-(k-1)}z^k (\trK_{k\delta}\otimes \trE_{k\delta})\right) \\
& \quad\quad\quad\quad\quad\quad  
+(q-q^{-1})z^n(\Delta(\tE_{k\delta})
-(\tE_{n\delta}\otimes 1
+\trK_{n\delta}\otimes \tE_{n\delta}))).
\end{align*}
From the coefficients of $z^n$ of the first and last sides
the above equations,
we obtain
\begin{align*}
& (q-q^{-1})(q^2a)^{-{(n-1)}}\Delta(\trE_{n\delta}) \\
& \quad =
(q-q^{-1})(q^2a)^{-{(n-1)}}(\trE_{n\delta}\otimes 1)
+(q-q^{-1})^2(q^2a)^{-{(n-2)}}(\sum\limits_{k=1}^{n-1}
\trE_{k\delta}\trK_{(n-k)\delta}\otimes \trE_{(n-k)\delta}) \\
& \quad\quad
+(q-q^{-1})(q^2a)^{-{(n-1)}}(\trK_{n\delta}\otimes \trE_{n\delta})
\\
& \quad\quad\quad\quad  
+(q-q^{-1})(\Delta(\tE_{n\delta})
-(\tE_{n\delta}\otimes 1
+\trK_{n\delta}\otimes \tE_{n\delta}))) + Z.
\end{align*}
for some $Z\in{\mathcal{V}}_{1,1}$.
Hence by \eqref{eqn:DelEndl},
we have \eqref{eqn:DeltEndel}
for this $n$.
\end{proof}

\begin{remark}
The above formulas on commutation relations and comultiplication of root vectors can also be obtained by Remark 3.10 in \cite{AH02} and Corollary 4.1.12 in \cite{HS20} quickly, from known formulas in the single parameter case, if we define multiparameter quantum groups from Nichols algebras.
\end{remark}

\section{Negative part $\trUm$}

\subsection{Root vectors $\trF_{k\delta+\al_0}$, $\trF_{k\delta+\al_1}$ and $\trF_{(k+1)\delta}$ with $k\geq 0$}

Let
\begin{equation*}
\blb X,Y\brb:=XY-\chi(\mu,\lambda)YX \quad
(X\in \trU_\lambda, Y\in \trU_\mu).
\end{equation*}

Define:
\begin{equation*}
\left\{\begin{array}{l}
\trF_{\al_1}:=\trF_1, \trF_{\al_0}:=\trF_0, \\
\trF_{n\delta+\al_1}:={\frac 1 {[2]_q}}
\blb \trF_\delta,\trF_{(n-1)\delta+\al_1}\brb \quad (n\in\bN), \\
\trF_{n\delta}:=\blb \trF_{\al_0},\trF_{(n-1)\delta+\al_1}\brb \quad (n\in\bN), \\
\trF_{n\delta+\al_0}:={\frac 1 {[2]_q}}
\blb \trF_{(n-1)\delta+\al_0}, \trF_\delta\brb\quad (n\in\bN). 
\end{array}\right.
\end{equation*}
Let $\bha:=\ha^{-1}q^{-4}$.
By the same argument as that for $\EQone-\EQseven$, we have 
\newline\newline
\mbox{$\FQone$}\quad\quad\quad\quad\quad\quad
$\blb \trF_{(n+1)\delta+\al_1}, \trF_{n\delta+\al_1}\brb =0 
\quad(n\in\bZgeqo)$,
\newline\newline
\mbox{$\FQtwo$}\quad
$\blb \trF_{(n+r)\delta+\al_1}, \trF_{n\delta+\al_1}\brb
+\bha^{r-1} q^{2(r-1)}\blb \trF_{(n+1)\delta+\al_1}, \trF_{(n+r-1)\delta+\al_1}\brb
=0 \quad (n\in\bZgeqo, r\in\bN)$,
\newline\newline
\mbox{$\FQthree$}\quad\quad\quad\quad\quad\quad
$\blb \trF_{n\delta+\al_0}, \trF_{(n+1)\delta+\al_0}\brb =0 
\quad(n\in\bZgeqo)$,
\newline\newline
\mbox{$\FQfour$}\,\,
$\blb \trF_{n\delta+\al_0}, \trF_{(n+r)\delta+\al_0}\brb
+\bha^{r-1} q^{2(r-1)}\blb \trF_{(n+r-1)\delta+\al_0}, \trF_{(n+1)\delta+\al_0}\brb
=0 \quad (n\in\bZgeqo, r\in\bN)$,
\newline\newline
$\FQfive$ \quad
$\blb \trF_{r\delta}, \trF_{n\delta+\al_1}\brb  =
(q^4-1)\sum_{k=1}^{r-1}\bha^k 
\trF_{(r-k)\delta}\trF_{(n+k)\delta+\al_1}+\bha^{r-1}[2]_qF_{(n+r)\delta+\al_1}$,
\newline\newline
$\FQsix$ \quad
$\blb \trF_{n\delta+\al_0}, \trF_{r\delta}\brb  =
(q^4-1)\sum_{k=1}^{r-1}\bha^k 
\trF_{(n+k)\delta+\al_0}\trF_{(r-k)\delta}+\bha^{r-1}[2]_qF_{(n+r)\delta+\al_0}$,
\newline\newline
$\FQseven$ \quad
$\blb \trF_{x\delta},\trF_{y\delta} \brb =0$,
\newline\newline
and 
\newline\newline
$\FQeight$ \quad
$\blb \trF_{x\delta+\al_0},\trF_{y\delta+\al_1} \brb = \trF_{(x+y+1)\delta}$.

\vspace{1cm}

Let
\begin{equation*}
{\bar A}:=1+(q-q^{-1})\sum_{k=1}^\infty \trF_{k\delta}
(q^2\bha)^{-(k-1)}z^k,
\end{equation*} where
$z$ is an indeterminate.
Define $\tF_{k\delta}\in \trUm_{-k\delta}$ $(k\in\bN)$ by
\begin{equation*}
{\bar A}=\exp((q-q^{-1})\sum_{k=1}^\infty\tF_{k\delta}z^k).
\end{equation*}

We have 
\newline\newline
$\FQnine$ \quad\quad\quad\quad\quad\quad\quad\quad  $\blb \tF_{k\delta}, \trF_{n\delta+\al_1}\brb
= {\frac {[2k]_q} k} \trF_{(n+k)\delta+\al_1}$, 
\newline\newline
$\FQten$ \quad\quad\quad\quad\quad\quad\quad\quad  $\blb \trF_{n\delta+\al_0}, \tF_{k\delta}\brb
= {\frac {[2k]_q} k} \trF_{(n+k)\delta+\al_0}$,
\newline\newline for $k\in\bN$ and $n\in\bZgeqo$.

\begin{definition}
Define the $\bC$-algebra automorphisms $\mcT_k:\trUqQ\to\trUqQ$
$(k\in I)$
as follows. 
Let $i,j\in I$, and suppose $i\ne j$.
Then let
\begin{align}
&\mcT_i(K_i)=K_j^{-1}(=\trK_{\al_i-\delta}),\,\,\mcT_i(K_j)=K_iK_j^2(=\trK_{\al_j+\delta}), \\
&\mcT_i(L_i)=L_j^{-1}(=\trL_{\al_i-\delta}),\,\,\mcT_i(L_j)=L_iL_j^2(=\trL_{\al_j+\delta}), \\
&\mcT_i(E_i)={\frac 1 {q-q^{-1}}}F_jL_j^{-1},\,\,
\mcT_i(E_j)=
{\frac {q^2q_{ij}} {q+q^{-1}}}
\lb E_j, \lb E_j, E_i \rb\rb, \\
&\mcT_i(F_i)=(q-q^{-1})K_j^{-1}E_j,\,\,
\mcT_i(F_j)=
{\frac {q^3q_{ij}^{-1}} {(q^4-1)(q^2-1)}}
\blb F_j, \blb F_j, F_i \brb\brb.
\end{align} 
\end{definition}
We can see the following, where $i\ne j$.
\begin{align}
& \mcT_i(\lb E_j,E_i\rb)=\lb E_j,E_i\rb,\,\, \mcT_i(\blb F_j,F_i\brb)=\blb F_j,F_i\brb, \\
&\mcT_i^{-1}(K_i)=K_i^2K_j(=\trK_{\al_i+\delta}),\,\,\mcT_i^{-1}(K_j)=K_i^{-1}(=\trK_{\al_j-\delta}), \\
&\mcT_i^{-1}(L_i)=L_i^2L_j(=\trL_{\al_i+\delta}),\,\,\mcT_i^{-1}(L_j)=L_i^{-1}(=\trL_{\al_j-\delta}), \\
&\mcT_i^{-1}(E_i)={\frac {q^2q_{ij}} {q+q^{-1}}}\lb\lb E_j,E_i\rb, E_i\rb,\,\,
\mcT_i^{-1}(E_j)={\frac 1 {q-q^{-1}}}K_i^{-1}F_i,\\
&\mcT_i^{-1}(F_i)={\frac {q^3q_{ij}^{-1}} {(q^4-1)(q^2-1)}}\blb\blb F_j,F_i\brb, F_i\brb,\,\,
\mcT_i^{-1}(F_j)=(q-q^{-1})E_iL_i^{-1}.
\end{align} 

Define the $\bC$-algebra anti-automorphisms $\Omega:\trUqQ\to \trUqQ$ 
by $\Omega(K_i)=L_j$, $\Omega(L_i)=K_j$, $\Omega(E_i)=E_j$
and $\Omega(F_i)=F_j$, where $i\ne j$. Then $\Omega^{-1}=\Omega$
and $\Omega\mcT_k\Omega=\mcT_k^{-1}$ $(k\in I)$.

Recall $a=q_{01}$.
We have
\begin{align}
& \mcT_1^{-k}(E_1)=q^{-2k}a^{-k}\trE_{k\delta+\al_1}\quad (k\in\bZgeqo), \label{eqn:TonemkEone}
\\
& \mcT_1^{-k}(F_1)=
{\frac {q^{6k}a^k} {(q^2-1)^{2k}}}\trF_{k\delta+\al_1}\quad (k\in\bZgeqo), \label{eqn:TonemkFone} \\
& \mcT_1^k(E_1)={\frac {q^{6k-5}a^{k-1}} {(q^2-1)^{2k-1}}}
\trF_{(k-1)\delta+\al_0}\trL_{(k-1)\delta+\al_0}^{-1}\quad (k\in\bN), \label{eqn:TonemkmoFzero} \\
& \mcT_1^k(F_1)=(q^2-1)q^{-(2k-1)}a^{-(k-1)}\trK_{(k-1)\delta+\al_0}^{-1}\trE_{(k-1)\delta+\al_0}
\quad (k\in\bN). \label{eqn:TonemkmoEzero}
\end{align} 
By \eqref{eqn:TonemkEone}-\eqref{eqn:TonemkmoEzero},
we have
\begin{equation} \label{eqn:commEFkdelali}
[\trE_{k\delta+\al_i},\trF_{k\delta+\al_i}]
=q^{-4k}(q^2-1)^{2k}(-\trK_{k\delta+\al_i}+\trL_{k\delta+\al_i})
\quad (i\in I,\,k\in\bZgeqo).
\end{equation}

We have
\begin{align}
& \mcT_1(\trE_{k\delta})= \trE_{k\delta}\,\,\mbox{and}\,\,
\mcT_1(\trF_{k\delta})= \trF_{k\delta}\,(k\in\bN).
\end{align} 

We also have
\begin{align*}
& \lb \trE_{r\delta}, \mcT_1^{-n}(E_1)\rb \\
& \quad =(q^2-q^{-2})\sum_{k=1}^{r-1}q^{-2(k-1)}(q^2a)^{2k}
\trE_{(r-k)\delta}\mcT_1^{-(n+k)}(E_1)+
q^{-2(r-1)}(q^2a)^{2r-1}[2]_q\mcT_1^{-(n+r)}(E_1),
\end{align*} and
\begin{align*}
& [\mcT_1^{n+1}(F_1), \trE_{r\delta}] \\
& \quad =(q^2-q^{-2})\sum_{k=1}^{r-1}q^{-2(k-1)}(q^2a)^{2k}
\trK_{k\delta}\mcT_1^{n+k+1}(F_1)\trE_{(r-k)\delta}
+q^{-2(r-1)}(q^2a)^{2r-1}[2]_qK_{k\delta}\mcT_1^{n+k+1}(F_1).
\end{align*} 

We have
\begin{align*}
\lb \tE_{k\delta}, \mcT_1^n(E_1)\rb
& = (q^2a)^k{\frac {[2k]_q} k}\mcT_1^{n-k}(E_1), \\
\lb \tE_{k\delta}, \mcT_1^n(E_1L_{\al_1}^{-1})\rb
& = (q^2a)^k{\frac {[2k]_q} k}\mcT_1^{n-k}(E_1)\trL_{(n-k-1)\delta+\al_0}, 
\\
[\mcT_1^n(F_1),\tE_{k\delta}]&=
 (q^2a)^k{\frac {[2k]_q} k}\trK_{k\delta}\mcT_1^{n+k}(E_1).
\end{align*}

We have
\begin{align*}
&\mcT_1^r(\trE_{n\delta})=\trE_{n\delta} \\
&\quad = \lb \trE_{\al_0}, \trE_{(n-1)\delta+\al_1} \rb \\
&\quad = \lb {\frac 1 {q-q^{-1}}}\trK_{\al_0}\mcT_1(F_1),
(q^2a)^{n-1}\mcT_1^{-(n-1)}(E_1) \rb \\
&\quad = {\frac {(q^2a)^{n-1}} {q-q^{-1}}}
\lb \mcT_1(\trK_{\al_1}^{-1}F_1), \mcT_1^{-(n-1)}(E_1) \rb  \\
&\quad = {\frac {(q^2a)^{n-1}} {q-q^{-1}}}
\lb \mcT_1^k(\trK_{\al_1}^{-1}F_1), \mcT_1^{k-n}(E_1) \rb 
\end{align*} and
\begin{align*}
&\mcT_1^r(\trF_{n\delta})=\trF_{n\delta} \\
&\quad = \blb \trF_{\al_0}, \trF_{(n-1)\delta+\al_1} \brb \\
&\quad = \blb (q-q^{-1})\mcT_1(E_1L_{\al_1}^{-1}),
{\frac {(q^2-1)^{2(n-1)}} {q^{6(n-1)}a^{n-1}}}
\mcT_1^{-n+1}(F_1) \brb \\
&\quad = {\frac {(q^2-1)^{2n-1}} {q^{6n-5}a^{n-1}}}
\blb  \mcT_1(E_1L_{\al_1}^{-1}), \mcT_1^{-n+1}(F_1) \brb  \\
&\quad = {\frac {(q^2-1)^{2n-1}} {q^{6n-5}a^{n-1}}}
\blb  \mcT_1^k(E_1L_{\al_1}^{-1}), \mcT_1^{k-n}(F_1) \brb .
\end{align*}

\subsection{Modifications $\tF_{k\delta}$ of $\trF_{k\delta}$}

For $n\in\bN$, 
let 
let $\dF_{n\delta}:=q^2\blb  \mcT_1(E_1L_{\al_1}^{-1}), \mcT_1^{-n+1}(F_1) \brb$. We have
\begin{equation*}
\dF_{n\delta}={\frac {q^{6n-3}a^{n-1}} {(q^2-1)^{2n-1}}}\trF_{n\delta}
\quad (n\in\bN).
\end{equation*}

\begin{lemma} \label{lemma:tEfFndel}
For $k,n\in\bN$ and $m\in\bZ$, we have that 
\begin{align*}
& [\tE_{k\delta}, 
\blb  \mcT_1^m(E_1L_{\al_1}^{-1}), \mcT_1^{m-n}(F_1) \brb] \\
& \quad = {\frac {[2k]_q} k}\Bigl(
-\trK_{k\delta}\blb\mcT_1^m(E_1L_{\al_1}^{-1}),\mcT_1^{m-n+k}(F_1)\brb 
+\trL_{k\delta}\blb\mcT_1^{m-k}(E_1L_{\al_1}^{-1}),\mcT_1^{m-n}(F_1)\brb \Bigr) \\
& \quad =
\left\{\begin{array}{l}
{\frac {[2k]_q} k}(-\trK_{k\delta}+\trL_{k\delta})\blb\mcT_1^m(E_1L_{\al_1}^{-1}),\mcT_1^{m-n+k}(F_1)\brb
\quad \mbox{if $n>k$,} \\
{\frac {[2k]_q} k}q^{-2}
(-\trK_{k\delta}+\trL_{k\delta}) \quad \mbox{if $n=k$,}  \\
0 \quad \mbox{if $k>n$.}
\end{array}\right.
\end{align*} 
In particular, for $k,n\in\bN$, we have
\begin{align}
& [\tE_{k\delta}, 
\dF_{n\delta}] \nonumber
\\
& \quad =
\left\{\begin{array}{l}
{\frac {[2k]_q} k}(-\trK_{k\delta}+\trL_{k\delta})\dF_{(n-k)\delta}
\quad \mbox{if $n>k$,} \\
{\frac {[2k]_q} k}
(-\trK_{k\delta}+\trL_{k\delta}) \quad \mbox{if $n=k$,}  \\
0 \quad \mbox{if $k>n$.}
\end{array}\right.
\label{eqn:tEfFndelone}
\end{align} 
\end{lemma}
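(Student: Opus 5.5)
We expand the commutator with the derivation rule for the ordinary commutator $[\,\cdot\,,\cdot\,]$ and then use the two commutation formulas stated just before the subsection:
\begin{align*}
\lb \tE_{k\delta}, \mcT_1^n(E_1L_{\al_1}^{-1})\rb
&= (q^2a)^k{\frac {[2k]_q} k}\mcT_1^{n-k}(E_1)\trL_{(n-k-1)\delta+\al_0},\\
[\mcT_1^n(F_1),\tE_{k\delta}]&= (q^2a)^k{\frac {[2k]_q} k}\trK_{k\delta}\mcT_1^{n+k}(F_1),
\end{align*}
taking the last line with $F_1$ on the right-hand side, as the weights require.

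Write $X=\mcT_1^m(E_1L_{\al_1}^{-1})$ and $Y=\mcT_1^{m-n}(F_1)$. The weights of $X$ and $Y$ are of the form $\pm(\text{real root})$, and $\tE_{k\delta}$ has weight $k\delta$. Each commutator of $\tE_{k\delta}$ with $X$ or $Y$ is then an ordinary commutator up to a $\chi$-twist, and these twists are absorbed by the Cartan factors $\trK_{k\delta}$, $\trL_{k\delta}$. First we have $[\tE_{k\delta},XY]=[\tE_{k\delta},X]Y+X[\tE_{k\delta},Y]$, and likewise for $YX$. Since $\blb X,Y\brb=XY-cYX$ with $c$ a constant depending only on the weights, we get
\[
[\tE_{k\delta},\blb X,Y\brb]=\blb[\tE_{k\delta},X],Y\brb'+\blb X,[\tE_{k\delta},Y]\brb',
\]
where the primed brackets use the same constant $c$.

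Next we substitute the two formulas and commute $\trK_{k\delta}$ and $\trL_{k\delta}$ to the far left. Commuting $\trK_{k\delta}$ past $X$, and $\trL_{k\delta}$ past $Y$, produces $\chi$-factors. The factors $(q^2a)^{\pm k}$ must cancel against these $\chi$-factors, and checking this cancellation is the bookkeeping step I expect to be the main obstacle. One has $\chi(k\delta,\al_1)=(q^2a)^k$, and the weight shifts are $\mcT_1^{\pm k}$. Once the factors cancel, the new brackets are again of the form $\blb\cdot,\cdot\brb$ with the correct constant, because the weight changes by a multiple of $\delta$ and $\chi(\delta,\delta)=q^4a\,q^{-4}a^{-1}\cdot q^4=q^4$ type factors balance. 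This yields the first displayed equality,
\[
{\textstyle\frac{[2k]_q}k}\bigl(-\trK_{k\delta}\blb X,\mcT_1^{m-n+k}(F_1)\brb+\trL_{k\delta}\blb \mcT_1^{m-k}(E_1L_{\al_1}^{-1}),Y\brb\bigr).
\]

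For the case split, the bracket $\blb\mcT_1^{p}(E_1L_{\al_1}^{-1}),\mcT_1^{p-s}(F_1)\brb$ is $\mcT_1^{p}$ applied to the bracket with $p=0$ (after adjusting the Cartan part), so it depends on $p$ only through $\mcT_1^p$. By the computation preceding the lemma, $\mcT_1^p$ fixes $\trF_{s\delta}$ and hence fixes these brackets for $s\ge1$. Therefore the two brackets in the first equality coincide when $n>k$, which gives the first case. When $n=k$, the bracket is $\mcT_1^m$ of $[E_1L_{\al_1}^{-1},F_1]$ up to a $\chi$-twist. This equals $q^{-2}\cdot(\text{scalar})$ times $(-1+\trK_{\al_1}^{-1}\trL_{\al_1})$ by \eqref{eqn:defEiEjKiLi}, and the resulting Cartan factor combines with $\trK_{k\delta},\trL_{k\delta}$ to give $q^{-2}(-\trK_{k\delta}+\trL_{k\delta})$. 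When $k>n$, the two brackets have weight $(k-n)\delta$ with positive $E$-content. They are equal by $\mcT_1$-invariance of the positive imaginary vectors, via $\mcT_1(\trE_{s\delta})=\trE_{s\delta}$, so after the Cartan adjustment the two terms cancel and the result is $0$. Finally, \eqref{eqn:tEfFndelone} follows by taking $m=1$ and multiplying by $q^2$, using the definition of $\dF_{n\delta}$.
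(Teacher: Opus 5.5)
Your outline is the paper's proof: expand with the two $\tE_{k\delta}$-formulas, use $\mcT_1$-invariance of $\trF_{s\delta}$ for $n>k$, use $[E_1,F_1]$ for $n=k$, use $\mcT_1$-invariance of $\trE_{s\delta}$ for $k>n$, then set $m=1$. The case $n>k$ and the final specialization are fine. However, the Cartan bookkeeping is asserted rather than carried out, and it is the whole content of the first equality and of the cases $n=k$ and $k>n$. The concrete facts you supply would make it fail.

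First, $\chi(\delta,\delta)=q_{00}q_{01}q_{10}q_{11}=1$, not $q^4$. This, together with $\chi(\delta,\al_1)\chi(\al_1,\delta)=1$, is exactly why the constant of $\blb\mcT_1^m(E_1L_{\al_1}^{-1}),\mcT_1^{m-n}(F_1)\brb$ is $(q^2a)^{-n}q^{-2}$ independently of $m$. You need that independence both for your claim that this bracket is $\mcT_1^m$ of the $m=0$ bracket and for the first equality. In the latter, moving $\trK_{k\delta}$ past $\mcT_1^m(E_1L_{\al_1}^{-1})$ costs $(q^2a)^{-k}$, moving $\trL_{k\delta}$ past $\mcT_1^{m-k}(E_1L_{\al_1}^{-1})$ costs $(q^2a)^{-k}$, and moving $\trL_{k\delta}$ past a product of weight $(k-n)\delta$ costs $1$. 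The input there is the ordinary commutator $[\tE_{k\delta},\mcT_1^m(E_1L_{\al_1}^{-1})]=(q^2a)^k\frac{[2k]_q}{k}\mcT_1^{m-k}(E_1L_{\al_1}^{-1})\trL_{k\delta}$; the formula you quote has a misprinted $\trL$-subscript.

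Second, for $n=k$ one has $\blb E_1L_{\al_1}^{-1},F_1\brb=q^{-2}[E_1,F_1]L_{\al_1}^{-1}=q^{-2}(1-\trK_{\al_1}\trL_{\al_1}^{-1})$. So the bracket is exactly $q^{-2}(1-\trK_{\al_1-m\delta}\trL_{\al_1-m\delta}^{-1})$, with scalar $1$. The cancellation then rests on $\trK_{k\delta}\trK_{\al_1-m\delta}\trL_{\al_1-m\delta}^{-1}=\trL_{k\delta}\trK_{\al_1-(m-k)\delta}\trL_{\al_1-(m-k)\delta}^{-1}$. With your element $-1+\trK_{\al_1}^{-1}\trL_{\al_1}$, the term $\trK_{\al_1-m\delta}^{-1}\trL_{\al_1-m\delta}(-\trK_{k\delta}+\trK_{k\delta}^{-1}\trL_{k\delta}^2)$ survives and does not cancel.

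For $k>n$, the two brackets are not equal. If they were, the result would be $(-\trK_{k\delta}+\trL_{k\delta})$ times a nonzero element rather than $0$. What is true, and what you must prove, is that the bracket with $\mcT_1^{m-k}$ equals $\trK_{k\delta}\trL_{k\delta}^{-1}$ times the bracket with $\mcT_1^{m}$. ``Positive $E$-content'' does not give this.

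The missing ingredient is the paper's conversion identity, with $s=k-n$:
$\blb\mcT_1^p(E_1L_{\al_1}^{-1}),\mcT_1^{p+s}(F_1)\brb=-q^{-2}\trK_{\al_1-(p+s)\delta}\trL_{\al_1-p\delta}^{-1}\lb\mcT_1^{p+s}(\trK_{\al_1}^{-1}F_1),\mcT_1^p(E_1)\rb$.
By $\mcT_1(\trE_{s\delta})=\trE_{s\delta}$, the last bracket is a $p$-independent multiple of $\trE_{s\delta}$. Comparing the Cartan prefactors at $p=m$ and $p=m-k$ gives exactly the factor $\trK_{k\delta}\trL_{k\delta}^{-1}$, so the two terms cancel.

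With these three corrections your argument becomes the paper's proof.
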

\begin{proof}
We have
\begin{align*}
& [\tE_{k\delta}, 
\blb  \mcT_1^m(E_1L_{\al_1}^{-1}), \mcT_1^{m-n}(F_1) \brb] \\
& \quad = [\tE_{k\delta}, 
[  \mcT_1^m(E_1L_{\al_1}^{-1}), \mcT_1^{m-n}(F_1) ]_{(q^2a)^{-m}q^{-2}}]  \\
& \quad = 
(q^2a)^k{\frac {[2k]} k}\Bigl(\bigl(
\mcT_1^{m-k}(E_1L_{\al_1}^{-1})\trL_{k\delta}
\mcT_1^{m-n}(F_1)
+(-1)\mcT_1^m(E_1L_{\al_1}^{-1})\trK_{k\delta}
\mcT_1^{m-n+k}(F_1)\bigr)  \\
& \quad\,\, -(q^2a)^{-n}q^{-2}
\bigl((-1)\trK_{k\delta}
\mcT_1^{m-n+k}(F_1)\mcT_1^m(E_1L_{\al_1}^{-1})
+\mcT_1^{m-n}(F_1)\mcT_1^{m-k}(E_1L_{\al_1}^{-1})\trL_{k\delta}
\bigl)\Bigl) \\
& \quad =(q^2a)^k{\frac {[2k]} k}\Bigl(
(-1)(q^2a)^{-k}\trK_{k\delta}
[\mcT_1^m(E_1L_{\al_1}^{-1}),\mcT_1^{m-n+k}(F_1)]_{(q^2a)^{-n+k}q^{-2}} \\
& \quad\,\,
+(q^2a)^{-k}
[\mcT_1^{m-k}(E_1L_{\al_1}^{-1}),\mcT_1^{m-n}(F_1)]_{(q^2a)^{-n+k}q^{-2}}
\trL_{k\delta}\Bigr) \\
& \quad = {\frac {[2k]} k}\Bigl(
(-1)\trK_{k\delta}\blb\mcT_1^m(E_1L_{\al_1}^{-1}),\mcT_1^{m-n+k}(F_1)\brb \\
& \quad\,\,
+\trL_{k\delta}\blb\mcT_1^{m-k}(E_1L_{\al_1}^{-1}),\mcT_1^{m-n}(F_1)\brb \Bigr).
\end{align*}

Since $\mcT_1^r(\trF_{n\delta})=\trF_{n\delta}$ for $r\in\bZ$, for $n>k\geq 1$, we have
\begin{align*}
& [\tE_{k\delta}, 
\blb  \mcT_1^m(E_1L_{\al_1}^{-1}), \mcT_1^{m-n}(F_1) \brb] \\
& \quad ={\frac {[2k]} k}(-\trK_{k\delta}+\trL_{k\delta})\blb\mcT_1^m(E_1L_{\al_1}^{-1}),\mcT_1^{m-n+k}(F_1)\brb.
\end{align*}

We have 
\begin{align*}
& \blb  \mcT_1^m(E_1L_{\al_1}^{-1}), \mcT_1^m(F_1) \brb \\
& \quad =[\mcT_1^m(E_1L_{\al_1}^{-1}), \mcT_1^m(F_1) ]_{q^{-2}} \\
& \quad =[\mcT_1^m(E_1)\trL_{\al_1-m\delta}^{-1}, \mcT_1^m(F_1) ]_{q^{-2}} \\
& \quad =q^{-2}[\mcT_1^m(E_1), \mcT_1^m(F_1) ]\trL_{\al_1-m\delta}^{-1} \\
& \quad =q^{-2}\mcT_1^m([E_1, F_1])\trL_{\al_1-m\delta}^{-1} \\
& \quad =q^{-2}\mcT_1^m(-\trK_{\al_1}+\trL_{\al_1})\trL_{\al_1-m\delta}^{-1} \\
& \quad =q^{-2}(-\trK_{\al_1-m\delta}+\trL_{\al_1-m\delta})\trL_{\al_1-m\delta}^{-1} \\
& \quad =q^{-2}(-\trK_{\al_1-m\delta}\trL_{\al_1-m\delta}^{-1}+1).
\end{align*} Hence we have
\begin{align*}
& [\tE_{k\delta}, 
\blb  \mcT_1^m(E_1L_{\al_1}^{-1}), \mcT_1^{m-k}(F_1) \brb] \\
& \quad ={\frac {[2k]} k}q^{-2}\Bigl(
(-1)\trK_{k\delta}(-\trK_{\al_1-m\delta}\trL_{\al_1-m\delta}^{-1}+1)
+\trL_{k\delta}(-\trK_{\al_1-(m-k)\delta}\trL_{\al_1-(m-k)\delta}^{-1}+1) \Bigr)
\\
& \quad ={\frac {[2k]} k}q^{-2}
(-\trK_{k\delta}+\trL_{k\delta}).
\end{align*} 

We have
\begin{align*}
& \blb \mcT_1^m(E_1L_{\al_1}^{-1}), \mcT_1^{m+k}(F_1) \brb \\
& \quad = [\mcT_1^m(E_1L_{\al_1}^{-1}), \mcT_1^{m+k}(F_1)]_{(q^2a)^kq^{-2}} \\
& \quad = [\mcT_1^m(E_1)\trL_{\al_1-m\delta}^{-1}, \mcT_1^{m+k}(F_1)]_{(q^2a)^kq^{-2}} \\
& \quad = (q^2a)^kq^{-2}[\mcT_1^m(E_1), \mcT_1^{m+k}(F_1)]\trL_{\al_1-m\delta}^{-1} \\
& \quad = q^{-2}\trL_{\al_1-m\delta}^{-1} [\mcT_1^m(E_1), \mcT_1^{m+k}(F_1)]
\end{align*} and 
\begin{align*}
& \lb \mcT_1^{m+k}(\trK_{\al_1}^{-1}F_1), \mcT_1^m(E_1) \rb \\
& \quad = [ \mcT_1^{m+k}(\trK_{\al_1}^{-1}F_1), \mcT_1^m(E_1) ]_{(q^2a)^kq^{-2}} \\
& \quad = [ \trK_{\al_1-(m+k)\delta}^{-1}\mcT_1^{m+k}(F_1), \mcT_1^m(E_1) ]_{(q^2a)^kq^{-2}} \\
& \quad = \trK_{\al_1-(m+k)\delta}^{-1}[\mcT_1^{m+k}(F_1), \mcT_1^m(E_1) ].
\end{align*} Hence we have
\begin{align*}
& \blb \mcT_1^m(E_1L_{\al_1}^{-1}), \mcT_1^{m+k}(F_1) \brb \\
& \quad = -q^{-2}\trK_{\al_1-(m+k)\delta}\trL_{\al_1-m\delta}^{-1}
\lb \mcT_1^{m+k}(\trK_{\al_1}^{-1}F_1), \mcT_1^m(E_1) \rb.
\end{align*}

Since $\mcT_1^r(\trE_{n\delta})=\trE_{n\delta}$ for $r\in\bZ$, for $k>n\geq 1$, we have
\begin{align*}
& [\tE_{k\delta}, 
\blb  \mcT_1^m(E_1L_{\al_1}^{-1}), \mcT_1^{m-n}(F_1) \brb] \\
& \quad = {\frac {[2k]} k}\Bigl(
(-1)\trK_{k\delta}\blb\mcT_1^m(E_1L_{\al_1}^{-1}),\mcT_1^{m-n+k}(F_1)\brb \\
& \quad\,\,
+\trL_{k\delta}\blb\mcT_1^{m-k}(E_1L_{\al_1}^{-1}),\mcT_1^{m-n}(F_1)\brb \Bigr) \\
& \quad = {\frac {[2k]} k}\Bigl(
q^{-2}\trK_{k\delta}\trK_{\al_1-(m-n+k)\delta}\trL_{\al_1-m\delta}^{-1}
\lb \mcT_1^{m-n+k}(\trK_{\al_1}^{-1}F_1), \mcT_1^m(E_1) \rb \\
& \quad\,\,
-q^2L_{k\delta}\trK_{\al_1-(m-n)\delta}\trL_{\al_1-(m-k)\delta}^{-1}
\lb \mcT_1^{m-n}(\trK_{\al_1}^{-1}F_1), \mcT_1^{m-k}(E_1) \rb \Bigr) \\
& \quad = 0.
\end{align*}
\end{proof}

\begin{lemma}\label{lemma:nrmalFhd}
Define $\dtF_{k\delta}\in \trUm_{-k\delta}$ $(k\in\bN)$ by
\begin{equation}\label{eqn:nrmalFhdOne}
\exp (\sum_{k=1}^\infty z^k\dtF_{k\delta})
=1+\sum_{r=1}^\infty z^r\dF_{r\delta},
\end{equation}
where $z$ in an indeterminate. Then we have
\begin{equation}\label{eqn:nrmalFhdTwo}
[\tE_{k\delta},\dtF_{r\delta}]
=\delta_{k,r}{\frac {[2k]_q} k}
(-\trK_{k\delta}+\trL_{k\delta})\quad (k,r\in\bN).
\end{equation}
Moreover, we have
\begin{equation}\label{eqn:nrmalFhdThree}
\dtF_{r\delta}
={\frac {q^{2r}} {(q-q^{-1})^{2r-1}}}\tF_{r\delta}
\quad (r\in\bN).
\end{equation}
\end{lemma}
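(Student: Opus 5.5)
The plan is to work with generating series in $z$. Let $c_k:={\frac {[2k]_q} k}$ and $H_k:=-\trK_{k\delta}+\trL_{k\delta}$, and put $\dF_{0\cdot\delta}:=1$ and $\dF(z):=1+\sum_{r\ge 1}z^r\dF_{r\delta}$. Then \eqref{eqn:tEfFndelone}, covering all three cases $n>k$, $n=k$, $n<k$, collapses into the single identity
\begin{equation*}
[\tE_{k\delta},\dF(z)]=c_k z^k H_k\,\dF(z).
\end{equation*}

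Before using this, I will record two commutativity facts.
\begin{itemize}
\item[(a)] The elements $\dF_{n\delta}$ $(n\in\bN)$ commute pairwise. This is $\FQseven$ together with $\chi(m\delta,n\delta)=\chi(\delta,\delta)^{mn}=(q^2\cdot \ha\cdot q^{-4}\ha^{-1}\cdot q^2)^{mn}=1$, so that $\blb\cdot,\cdot\brb$ is the ordinary commutator on imaginary root vectors.
\item[(b)] The elements $\trK_{k\delta}$ and $\trL_{k\delta}$ commute with every $\dF_{n\delta}$, since conjugation by them multiplies by a power of $\chi(\delta,\delta)=1$.
\end{itemize}
Consequently $\dtF_{r\delta}$, defined through the logarithm of $\dF(z)$, is a polynomial in the $\dF_{s\delta}$ with $s\le r$. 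All the $\dtF_{r\delta}$ therefore commute with each other and with $H_k$.

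To prove \eqref{eqn:nrmalFhdTwo}, set $G(z):=\sum_{r\ge1}z^r\dtF_{r\delta}$, so that $\dF(z)=\exp G(z)$. I will induct on $r$ and show $[\tE_{k\delta},\dtF_{s\delta}]=\delta_{k,s}c_kH_k$ for all $s$. Suppose this holds for every $s<r$. Every such commutator then lies in $\trUo$ and commutes with all $\dtF$'s by (b). Since ${\mathrm{ad}}\,\tE_{k\delta}$ is a derivation, the coefficient of $z^r$ in $[\tE_{k\delta},\exp G(z)]$ equals
\begin{equation*}
[\tE_{k\delta},\dtF_{r\delta}]+\Bigl(\text{coefficient of }z^r\text{ in }\bigl(\textstyle\sum_{s<r}z^s[\tE_{k\delta},\dtF_{s\delta}]\bigr)\exp G(z)\Bigr).
\end{equation*}
The step that makes the commutators pass through is this: every monomial of $\exp G$ of $z$-degree $r$ other than $\dtF_{r\delta}$ itself involves only $\dtF_{s\delta}$ with $s<r$. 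Applying the derivation to such a monomial produces factors $[\tE_{k\delta},\dtF_{s\delta}]$, and by the induction hypothesis and (b) these can be moved to the front. By the induction hypothesis the bracketed sum is $\sum_{s<r}\delta_{k,s}c_kH_kz^s=[k<r]\,c_kz^kH_k$. Comparing with the $z^r$-coefficient of $c_kz^kH_k\exp G(z)$ gives two cases. If $r>k$, the $z^k$ term from the known part already reproduces the right-hand side, so $[\tE_{k\delta},\dtF_{r\delta}]=0$. If $r\le k$, the right-hand side is $\delta_{k,r}c_kH_k$ and the sum is empty, so $[\tE_{k\delta},\dtF_{r\delta}]=\delta_{k,r}c_kH_k$. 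This bookkeeping, and in particular checking (a) and (b) carefully so that the derivation can be pulled through the exponential, is the main point of care; the rest is formal.

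To prove \eqref{eqn:nrmalFhdThree}, I will compare generating functions. Since $q^2\bha=\ha^{-1}q^{-2}$, we get $(q-q^{-1})(q^2\bha)^{-(k-1)}\trF_{k\delta}=(q-q^{-1})(q^2\ha)^{k-1}\trF_{k\delta}$. Using $(q^2-1)^{2k-1}=q^{2k-1}(q-q^{-1})^{2k-1}$, we have
\begin{equation*}
\dF_{k\delta}=q^{4k-2}\ha^{k-1}(q-q^{-1})^{-(2k-1)}\trF_{k\delta}
=\Bigl({\frac {q^2} {(q-q^{-1})^2}}\Bigr)^k(q-q^{-1})(q^2\ha)^{k-1}\trF_{k\delta}.
\end{equation*}
Writing $w:=zq^2/(q-q^{-1})^2$, this says $1+\sum_r z^r\dF_{r\delta}={\bar A}|_{z\mapsto w}=\exp\bigl((q-q^{-1})\sum_r w^r\tF_{r\delta}\bigr)$. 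Taking logarithms, which is legitimate because everything commutes by (a), and comparing coefficients of $z^r$ gives $\dtF_{r\delta}=(q-q^{-1})q^{2r}(q-q^{-1})^{-2r}\tF_{r\delta}$, which is \eqref{eqn:nrmalFhdThree}.
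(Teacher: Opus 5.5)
Your proposal is correct and follows essentially the paper's route. The paper likewise turns \eqref{eqn:tEfFndelone} into a relation for the generating series $B=1+\sum_r z^r\dF_{r\delta}$ and extracts $[\tE_{k\delta},\dtF_{r\delta}]$ by induction on the $z$-degree; it phrases this as the conjugation $B\tE_{k\delta}B^{-1}=\exp(\sum_r z^r{\mathrm{ad}}\,\dtF_{r\delta})(\tE_{k\delta})$, whereas you apply ${\mathrm{ad}}\,\tE_{k\delta}$ as a derivation to $\exp G(z)$, your checks (a) and (b) via $\chi(\delta,\delta)=1$ making explicit the commutations the paper uses only tacitly, and both proofs get \eqref{eqn:nrmalFhdThree} from the same rescaling $z\mapsto zq^2/(q-q^{-1})^2$ of ${\bar A}$.
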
 
\begin{proof}

Let $B$ be the both sides of  $\eqref{eqn:nrmalFhdOne}$.
We prove \eqref{eqn:nrmalFhdTwo}.
We use a similar argument as in 
Proof of Lemma\ref{lemma:expanA}.
Let $\tE_0:=\dF_0:=1$
$(\in\trU)$.
Let ${\mathcal{G}}^\prime$ be
the $\bC$-linear
subspace
of $\trU$
with the $\bC$-basis
$\{\trK_{a\delta}\trL_{b\delta}\dF_{r\delta}
\,|\,r,a,b\in\bZgeqo\}$,
and let ${\mathcal{G}}:=
\oplus_{k=0}^\infty\tE_{k\delta}{\mathcal{G}}^\prime$
the $\bC$-linear
subspace
of $\trU$
with the $\bC$-basis
$\{\tE_{k\delta}\trK_{a\delta}\trL_{b\delta}\dF_{r\delta}
\,|\,k,r,a,b\in\bZgeqo\}$.
For $l\in\bN$,
define the
$\bC$-linear map 
${\tilde{T}}_l:{\mathcal{G}}\to{\mathcal{G}}$
by 
\begin{equation*}
{\tilde{T}}_l(\tE_{k\delta}Y)
:=\delta_{k,l}{\frac {[2l]_q} l}
(-\trK_{l\delta}+\trL_{l\delta})Y\quad (k\in\bZgeqo,\,
Y\in{\mathcal{G}}^\prime).
\end{equation*}
Then ${\tilde{T}}_l({\mathrm{ad}}\dtF_{r\delta})_{|{\mathcal{G}}}
=({\mathrm{ad}}\dtF_{r\delta})_{|{\mathcal{G}}}{\tilde{T}}_l$
$(l,r\in\bN)$.
Fix $n\in\bN$, and
assume that we have proved 
$[X,\dtF_{l\delta}]
={\frac {[2l]_q} l}{\tilde{T}}_l(X)$
for $X\in{\mathcal{G}}$.
Let $\equiv_n$ mean the equality modulo $z^{n+1}$.
By an induction on $n$, we have:
\begin{align*}
& B\tE_{k\delta}B^{-1} \\
 & \equiv_n 
(\exp\left(\sum_{r=1}^\infty z^r
{\mathrm{ad}}\dtF_{r\delta}\right))(\tE_{k\delta}) \\
 & \equiv_n 
(\exp\left(-\sum_{l=1}^\infty z^l{\tilde{T}}_l\right)(\tE_{k\delta}) 
+z^n{\tilde{T}}_n(\tE_{k\delta})
+z^n({\mathrm{ad}}\dtF_{n\delta})(\tE_{k\delta}) \\
& \equiv_n
\tE_{k\delta}-z^k{\frac {[2k]_q} k}(-\trK_{k\delta}+\trL_{k\delta})
+z^n({\tilde{T}}_n(\tE_{k\delta})
-[\tE_{k\delta},\dtF_{n\delta}]).
\end{align*}
Hence 
\begin{align*}
& \left(1+\sum_{r=1}^\infty z^r\dF_{r\delta}
\right) \tE_{k\delta} \\
& \quad \equiv_n \left(
\tE_{k\delta}-z^k{\frac {[2k]_q} k}(-\trK_{k\delta}+\trL_{k\delta})
+z^n({\tilde{T}}_n(\tE_{k\delta})
-[\tE_{k\delta},\dtF_{n\delta}])
\right) \\
& \quad\quad\quad\quad \cdot \left(1+\sum_{r=1}^\infty z^r\dF_{r\delta}
\right). 
\end{align*}
By \eqref{eqn:tEfFndelone}, we have
$[\tE_{k\delta},\dF_{n\delta}]
-({\tilde{T}}_n(\tE_{k\delta})
-[\tE_{k\delta},\dtF_{n\delta}])
=[\tE_{k\delta},\dF_{n\delta}]$.
Hence 
we have \eqref{eqn:nrmalFhdTwo}.

We have the following:

\begin{align*}
&
\exp (\sum_{k=1}^\infty z^k\dtF_{k\delta}) \\
& \quad =1+\sum_{r=1}^\infty z^r\dF_{r\delta} \\
& \quad =1+\sum_{r=1}^\infty z^r
{\frac {q^{6r-3}a^{r-1}} {(q^2-1)^{2r-1}}}\trF_{r\delta} \\
& \quad =1+\sum_{r=1}^\infty z^r
{\frac {q^{6r-3}a^{r-1}} {(q^2-1)^{2r-1}}}
(q^2a)^{-(r-1)}
(q^2{\bar a})^{-(r-1)}\trF_{r\delta} \\
& \quad =1+(q-q^{-1})\sum_{r=1}^\infty z^r
{\frac {q^{6r-2}a^{r-1}} {(q^2-1)^{2r}}}
(q^2a)^{-(r-1)}
(q^2{\bar a})^{-(r-1)}\trF_{r\delta} \\
& \quad =1+(q-q^{-1})\sum_{r=1}^\infty z^r
{\frac {q^{4r}} {(q^2-1)^{2r}}}
(q^2{\bar a})^{-(r-1)}\trF_{r\delta} \\
& \quad =
\exp ((q-q^{-1})\sum_{k=1}^\infty z^k
{\frac {q^{4k}} {(q^2-1)^{2k}}}\tF_{k\delta}) \\
& \quad =
\exp (\sum_{k=1}^\infty z^k
{\frac {q^{2k}} {(q-q^{-1})^{2k-1}}}\tF_{k\delta}).
\end{align*}
Hence we have \eqref{eqn:nrmalFhdThree}.
\end{proof}

\subsection{Coproduct for negative part}
For $x$, $y\in\bZgeqo$, 
define the $\bC$-linear subspaces 
${\mathcal{V}}^\prime_{x,y}$, 
of $\trU\otimes\trU$
by
\begin{equation*}
{\mathcal{V}}^\prime_{x,y}
=\oplus_{m,n,r,l\in\bZgeqo}
\trUm_{-(n\delta+(y+l)\al_1)}\otimes
\trUm_{-(m\delta+(x+r)\al_0)}\trL_{n\delta+(y+l)\al_1}.
\end{equation*}

By the same arguments for 
Lemmas~\ref{lemma:PosCopOne} 
and \ref{lemma:PosCopTwo}, 
we have:

\begin{lemma} \label{lemma:NegCopOne} 
Let $n\in\bN$. Then we have the following{\rm{:}}
\newline
{\rm{(1)}}
\begin{equation} \label{eqn:NegDelEndl}
\begin{array}{l}
\Delta(\trF_{n\delta})= \\
\quad 1\otimes \trF_{n\delta}
+(q-q^{-1})q^2 {\bar{a}}\left(\sum\limits_{k=1}^{n-1}
\trF_{(n-k)\delta}\otimes \trF_{k\delta}\trL_{(n-k)\delta}
\right)
+\trF_{n\delta}\otimes \trL_{n\delta}
+X
\end{array}
\end{equation} for some $X\in{\mathcal{V}}^\prime_{1,1}$.
\newline\newline
{\rm{(2)}} 
\begin{equation} \label{eqn:NegDelEndlalzero}
\begin{array}{l}
\Delta(\trF_{n\delta+\al_0})= \\
\quad 1\otimes\trF_{n\delta+\al_0}
+(q-q^{-1})\left(\sum\limits_{k=0}^{n-1}
(q^2{\bar{a}})^{-(n-k-1)}
\trF_{(n-k)\delta}\otimes\trF_{k\delta+\al_0}\trL_{(n-k)\delta}
\right)
+\trF_{n\delta+\al_0}\otimes\trL_{n\delta+\al_0} 
+Y_0
\end{array}
\end{equation} for some $Y_0\in{\mathcal{V}}^\prime_{1,2}$.
\newline\newline
{\rm{(3)}}
\begin{equation} \label{eqn:NegDelEndlalone}
\begin{array}{l}
\Delta(\trF_{n\delta+\al_1})= \\
\quad 1 \otimes\trF_{n\delta+\al_1}
+(q-q^{-1})\left(\sum\limits_{k=0}^{n-1}
(q^2{\bar{a}})^{-(n-k-1)} 
\trF_{k\delta+\al_1}\otimes\trF_{(n-k)\delta}\trL_{k\delta+\al_1}\right)
+\trF_{n\delta+\al_1}\otimes \trL_{n\delta+\al_1}
+Y_1
\end{array}
\end{equation} for some $Y_1\in{\mathcal{V}}^\prime_{2,1}$.
\newline\newline
{\rm{(4)}}
We have
\begin{equation} \label{eqn:NegDeltEndel}
\Delta(\tF_{n\delta})
=1\otimes \tE_{n\delta}
+\tF_{n\delta}\otimes\trL_{n\delta} +X_n\quad 
\end{equation} for some $X_n\in{\mathcal{V}}^\prime_{1,1}$.
\end{lemma}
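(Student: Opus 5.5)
I will mirror the proofs of Lemmas~\ref{lemma:PosCopOne} and \ref{lemma:PosCopTwo}, with the roles of the two tensor factors interchanged and $\lb\,,\,\brb$ replaced by $\blb\,,\,\brb$. The pattern is visible from $\Delta(\trF_i)=\trF_i\otimes\trL_i+1\otimes\trF_i$: the ``leading'' term is now $1\otimes\trF$ on the right and $\trF\otimes\trL$ on the left. The cleanest way to organize this is through the $\bC$-linear anti-automorphism-type symmetry $\Omega$, or through the flip $\tau(x\otimes y)=y\otimes x$ combined with the obvious correspondence $\trE_i\leftrightarrow \trF_i$, $\trK_i\leftrightarrow\trL_i$, $Q\leftrightarrow Q^{t}$ (which sends $\ha$ to $\bha=q^{-4}\ha^{-1}$ and $\chi(\lambda,\mu)$ to $\chi(\mu,\lambda)$). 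However, since I am not sure such a map is a Hopf map on the nose, I will just redo the induction directly.

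\emph{Base step.} I compute $\Delta(\trF_\delta)=\Delta(\blb \trF_0,\trF_1\brb)$ by expanding
\[
\blb \trF_0\otimes\trL_0+1\otimes\trF_0,\ \trF_1\otimes\trL_1+1\otimes\trF_1\brb
\]
with weights $-\al_0$ and $-\al_1$. The terms $\trF_0\trF_1\otimes\trL_\delta$ and $1\otimes\trF_0\trF_1$ assemble into $\trF_\delta\otimes\trL_\delta+1\otimes\trF_\delta$. The cross terms give a single term in $\trUm_{-\al_1}\otimes\trUm_{-\al_0}\trL_{\al_1}$ with an explicit coefficient. In the same way I compute $\Delta(\trF_{\delta+\al_i})$, exactly as \eqref{eqn;DeltaEdelta} and the two displays following it.

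\emph{Inductive step for (1)--(3).} I use the recursive definitions $\trF_{n\delta+\al_1}=[2]_q^{-1}\blb\trF_\delta,\trF_{(n-1)\delta+\al_1}\brb$, $\trF_{n\delta}=\blb\trF_{\al_0},\trF_{(n-1)\delta+\al_1}\brb$ and $\trF_{n\delta+\al_0}=[2]_q^{-1}\blb\trF_{(n-1)\delta+\al_0},\trF_\delta\brb$. I apply $\Delta$, insert the inductive formulas, and keep only the terms that are not in the relevant ${\mathcal{V}}'_{x,y}$. These subspaces are stable under multiplication by the images of root vectors in the appropriate sense, since their defining degree conditions only require ``at least $x$ copies of $\al_0$'' on the right factor. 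The surviving terms are then reorganized using $\FQfive$, $\FQsix$ and $\FQeight$. The coefficients $(q^2\bha)^{-(n-k-1)}$ and $(q-q^{-1})q^2\bha$ should come out exactly as in the positive case with $\ha$ replaced by $\bha$. This is because every structure constant entering the positive computation is a value of $\chi$, and $\chi$ is replaced by $\chi^t$ here.

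\emph{Part (4).} I repeat the generating-function argument of Lemma~\ref{lemma:PosCopTwo}. Working modulo $z^{n+1}\bC[[z]]\,\trU\otimes\trU+\bC[[z]]{\mathcal{V}}'_{1,1}$, I write $\Delta(\bar A)$ in two ways. The first is the image of $\exp((q-q^{-1})\sum z^k\tF_{k\delta})$. The second uses (1), which gives $\Delta(\bar A)\equiv(1+(q-q^{-1})\sum(q^2\bha)^{-(k-1)}z^k\,1\otimes\trF_{k\delta})\cdot(1+(q-q^{-1})\sum(q^2\bha)^{-(k-1)}z^k\,\trF_{k\delta}\otimes\trL_{k\delta})$. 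This factorization requires that the $\trL_{k\delta}\otimes\trF$ and $\trF\otimes\trL$ families commute modulo ${\mathcal{V}}'_{1,1}$, which holds because $\trL_\delta$ commutes with $\trF_{k\delta}$ up to $\chi(\delta,\delta)=1$. Comparing the $z^n$ coefficients gives $\Delta(\tF_{n\delta})=1\otimes\tF_{n\delta}+\tF_{n\delta}\otimes\trL_{n\delta}$ mod ${\mathcal{V}}'_{1,1}$. (The statement's ``$1\otimes\tE_{n\delta}$'' is presumably a typo for $1\otimes\tF_{n\delta}$, and that is what I prove.)

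The main obstacle is the bookkeeping in (1), namely checking that the middle coefficient $(q-q^{-1})q^2\bha$ comes out right. As a first check I verify it for $n=2$ by direct expansion, and then trust the $\chi\mapsto\chi^t$ symmetry for the general step.
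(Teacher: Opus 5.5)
Your plan is the paper's own argument. The paper proves this lemma only by remarking that it follows ``by the same arguments'' as Lemmas~\ref{lemma:PosCopOne} and~\ref{lemma:PosCopTwo}, i.e.\ induction from the base coproducts followed by the $\exp$ comparison. Your correction of $1\otimes\tE_{n\delta}$ to $1\otimes\tF_{n\delta}$ in (4) is right.

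However, the statement contains a second misprint, which your base step will run into and which you should correct rather than claim to prove. Take ${\mathcal{V}}'_{x,y}$ as defined: $x$ bounds the excess of $\al_0$ in the right tensor factor, and $y$ the excess of $\al_1$ in the left one. A direct computation gives
\[
\Delta(\trF_{\delta+\al_0})=1\otimes\trF_{\delta+\al_0}+(q-q^{-1})\trF_\delta\otimes\trF_{\al_0}\trL_\delta+\trF_{\delta+\al_0}\otimes\trL_{\delta+\al_0}+q^{-3}(q-q^{-1})^2\,\trF_{\al_1}\otimes\trF_{\al_0}^2\trL_{\al_1}.
\]
Its last term lies in ${\mathcal{V}}'_{2,1}$ but not in ${\mathcal{V}}'_{1,2}$. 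Symmetrically, $\Delta(\trF_{\delta+\al_1})$ contains $q^{-3}(q-q^{-1})^2\,\trF_{\al_1}^2\otimes\trF_{\al_0}\trL_{2\al_1}$, which lies in ${\mathcal{V}}'_{1,2}$ but not in ${\mathcal{V}}'_{2,1}$. So (2) and (3) are false as printed already for $n=1$. What your induction actually establishes is $Y_0\in{\mathcal{V}}'_{2,1}$ and $Y_1\in{\mathcal{V}}'_{1,2}$.

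Two smaller points. First, the symmetry you hesitated to use is exact. The assignment $\trE_i\mapsto\trF_i$, $\trF_i\mapsto\trE_i$, $\trK_i\mapsto\trL_i$, $\trL_i\mapsto\trK_i$ defines an algebra isomorphism $\phi:U_{q,Q^{t}}\to\trUqQ$, where $Q^{t}$ is $Q$ with $\ha$ replaced by $\bha$, and it satisfies $(\phi\otimes\phi)\circ\Delta=\tau\circ\Delta\circ\phi$. It carries $\lb\,,\,\rb$ to $\blb\,,\,\brb$, hence sends the root vectors and $\tE_{k\delta}$ of $U_{q,Q^{t}}$ to the corresponding $\trF$'s and $\tF_{k\delta}$. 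Moreover $\tau\circ(\phi\otimes\phi)$ maps ${\mathcal{V}}_{x,y}$ onto ${\mathcal{V}}'_{x,y}$. So the lemma is literally the image of Lemmas~\ref{lemma:PosCopOne} and~\ref{lemma:PosCopTwo} for $Q^{t}$, which also makes the correct subscripts in (2) and (3) evident.

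Second, if you redo the induction by hand instead, the relations that enter are $\FQseven$ and $\FQeight$, not $\FQfive$ and $\FQsix$. The cross terms that must vanish are commutators $[\trF_\delta,\trF_{j\delta}]$ (recall $\chi(\delta,\delta)=1$), which is $\FQseven$. The $k=0$ term of (2) comes from $\blb\trF_{(n-1)\delta+\al_0},\trF_{\al_1}\brb=\trF_{n\delta}$, which is $\FQeight$.
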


\section{Universal ${\mathrm{R}}$-matrix}
\subsection{Drinfeld paring}
By a well-known argument called the Drinfeld
quantum double construction,
we have the following.
We have the 
bilinear homomorphism
\begin{equation} \label{eqp:DEFpair}
\langle\,,\,\rangle:\trUpgeq\times\trUmleq\to\bC
\end{equation}
satisfying the following $(DP1)$-$(DP3)$.
We also denote by $\langle\,,\,\rangle$
the bilinear map from $(\trUpgeq\otimes\trUpgeq)
\times(\trUmleq\otimes\trUmleq)$ to $\bC$ defined by
\begin{equation*}
\langle X_1\otimes X_2,Y_1\otimes Y_2\rangle
=\langle X_1,Y_1\rangle\cdot\langle X_2,Y_2\rangle
\quad (X_r\in\trUpgeq,\,Y_r\in\trUmleq\,\,(r\in\fkJ_{1,2})).
\end{equation*}
Let $\tau:\trU\otimes\trU\to\trU\otimes\trU$
be the $\bC$-algebra automorphism defined by
$\tau(Z_1\otimes Z_2)=Z_2\otimes Z_1$
$(Z_1$, $Z_2\in\trU)$.
\newline\newline
$(DP1)$ \quad $\langle X_1X_2,Y\rangle
=\langle X_1\otimes X_2,(\tau\circ\Delta)(Y)\rangle$, \quad 
$\langle X, Y_1Y_2\rangle
=\langle \Delta(X),Y_1\otimes Y_2\rangle$. \newline
$(DP2)$ \quad $\langle S(X),Y\rangle=\langle X,S^{-1}(Y)\rangle$,
$\langle 1,Y\rangle=\varepsilon(Y)$,
$\langle X,1\rangle=\varepsilon(X)$. \newline
$(DP3)$ \quad $\langle \trK_\lambda, \trL_\mu\rangle
=\chi(\lambda,\mu)$, $\langle \trE_i, \trF_j\rangle=\delta_{ij}$, 
 $\langle \trK_\lambda, \trF_j\rangle=0$, 
 $\langle \trE_i, \trL_\mu\rangle=0$. 
\newline\par It is easy to see:
\begin{equation}
\langle
X\trK_\lambda,
Y\trL_\mu
\rangle
=\chi(\lambda,\mu)\cdot 
\delta_{\lambda^\prime,\mu^\prime}\langle X, Y\rangle
\quad (\lambda,\mu\in\bZ\Pi,\,
\lambda^\prime,\mu^\prime\in\bZgeqo\Pi,\,X\in\trUp_{\lambda^\prime},Y\in\trUm_{-\mu^\prime}).
\end{equation}
Then we have the following lemma by \cite{D86}.
 \begin{lemma} \label{lemma:DrinfeldQDCone}
 Let $\Delta^{(2)}=(\rmid\otimes\Delta)\cdot\Delta$.
 Then for $X\in\trUpgeq$ and $Y\in\trUmleq$,
 letting $\Delta^{(2)}(X)=\sum_{u=1}^kX_{1,u}\otimes X_{2,u} \otimes X_{3,u}$
 $(X_{x,u}\in\trUpgeq)$
 and $\Delta^{(2)}(Y)=\sum_{v=1}^rY_{1,v}\otimes Y_{2,v} \otimes Y_{3,v}$
  $(Y_{y,v}\in\trUmleq)$, 
 we have
 \begin{equation*}
 \begin{array}{lcl}
 YX &= & \sum\limits_{u=1}^k\sum\limits_{v=1}^r\langle S^{-1}(X_{1,u}), Y_{1,v}\rangle\langle X_{3,u}, Y_{3,v}\rangle
 X_{2,u}Y_{2,v}, \\
XY &= & \sum\limits_{u=1}^k\sum\limits_{v=1}^r\langle X_{1,u}, Y_{1,v}\rangle\langle S^{-1}(X_{3,u}), Y_{3,v}\rangle
 Y_{2,v}X_{2,u}.
 \end{array}
 \end{equation*}
\end{lemma}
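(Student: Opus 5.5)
The plan is to derive both identities from the defining property of the Drinfeld double. We first establish, for all $X\in\trUpgeq$ and $Y\in\trUmleq$,
\begin{equation*}
\sum_{(X),(Y)}\langle X_{(1)},Y_{(1)}\rangle\, Y_{(2)}X_{(2)}
=\sum_{(X),(Y)} X_{(1)}Y_{(1)}\,\langle X_{(2)},Y_{(2)}\rangle
\end{equation*}
in Sweedler notation. Both formulas in the statement then follow by convolving this identity with the pairing against antipodes.

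The first step is to check this identity on generators. For $X=\trK_\lambda$ it is immediate from the group-like coproducts, and likewise for $Y=\trL_\mu$. For $X=\trE_i$ and $Y=\trF_j$ we expand $\Delta(\trE_i)=\trE_i\otimes1+\trK_i\otimes\trE_i$ and $\Delta(\trF_j)=\trF_j\otimes\trL_j+1\otimes\trF_j$. Using (DP3) together with the formula $\langle X\trK_\lambda,Y\trL_\mu\rangle=\chi(\lambda,\mu)\delta\langle X,Y\rangle$, the identity reduces exactly to $[\trE_i,\trF_j]=\delta_{ij}(-\trK_i+\trL_i)$, i.e.\ \eqref{eqn:defEiEjKiLi}. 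The mixed relations between $\trK_\lambda$ and $\trF_j$, and between $\trL_\mu$ and $\trE_i$, come out of the conjugation relations in the same way.

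The second step extends the identity to all of $\trUpgeq$ and $\trUmleq$. If it holds for $X$ and $X'$ (with $Y$ arbitrary), then it holds for $XX'$: one uses (DP1) to split $\langle XX',Y\rangle$ via $(\tau\circ\Delta)(Y)$ and applies the identity twice, once for $X'$ and then for $X$. The same argument works in the $Y$ variable. Since by (Ba2) and (Ba4) the algebras $\trUpgeq$ and $\trUmleq$ are generated by the elements treated in the first step, the identity holds in general.

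The third step extracts the stated formulas. We apply the identity to the middle components of $\Delta^{(2)}(X)$ and $\Delta^{(2)}(Y)$, then pair the outer legs against $S^{-1}(X_{1,u})$ on the left and $S^{-1}(X_{3,u})$ on the right. Coassociativity, the antipode axioms $\sum S^{-1}(x_{(2)})x_{(1)}=\varepsilon(x)$, the compatibility (DP2), and $\langle 1,Y\rangle=\varepsilon(Y)$ collapse the extra legs and give $YX=\sum\langle S^{-1}(X_{1,u}),Y_{1,v}\rangle\langle X_{3,u},Y_{3,v}\rangle X_{2,u}Y_{2,v}$. The second formula is obtained symmetrically, or by multiplying through by the convolution inverse of the pairing.

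The main obstacle is the bookkeeping in the third step. Specifically, one must check that the convolution inverse of $\langle\,,\,\rangle$ is $\langle S^{-1}(\cdot),\cdot\rangle$ in the correct leg, given the $\tau$-twist in (DP1). I would verify this first on $\trE_i$ and $\trF_j$, which fixes the ordering conventions, and then invoke the standard Drinfeld double argument of \cite{D86}.
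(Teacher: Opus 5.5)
Your proposal is correct and is the standard quantum-double argument that the paper relies on; the paper gives no proof beyond citing \cite{D86}. Your cross relation $\sum\langle X_{(1)},Y_{(1)}\rangle Y_{(2)}X_{(2)}=\sum X_{(1)}Y_{(1)}\langle X_{(2)},Y_{(2)}\rangle$ is the right one for the paper's conventions: on $\trE_i,\trF_i$ it is exactly \eqref{eqn:defEiEjKiLi}. The leg bookkeeping you worried about also works out. By (DP1) the collapsing factor is $\langle\sum X_{(2)}S^{-1}(X_{(1)}),Y\rangle$ for the first formula and $\langle\sum S^{-1}(X_{(2)})X_{(1)},Y\rangle$ for the second, and both equal $\varepsilon(X)\varepsilon(Y)$.

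One small repair is needed in the extension from generators. As written it is circular, because each direction assumes the identity for all elements of the other variable. Fix it by first extending in $Y$ while keeping $X$ in the subcoalgebra spanned by $1$, $\trE_i$, $\trK_\lambda$, which is all that argument uses, and only then extending in $X$.
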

By Lemma~\ref{lemma:DrinfeldQDCone}, we have the following.

 \begin{lemma} \label{lemma:DrinfeldQDCtwo}
Let $\lambda\in\bZ\Pi$.
 Define the $\bC$-linear homomorphism
 $f_\lambda:\trU\to\bC$ by the following 
 {\rm{(i)}}-{\rm{(ii)}}{\rm{:}} \newline\newline
 {\rm{(i)}} $f_\lambda(\trF_iZ)=f_\lambda(Z\trE_i)=0$ for  $i\in I$
 and $Z\in\trU$. \newline
 {\rm{(ii)}} $f_\lambda(\trK_\mu\trL_\nu)=    
 \delta_{0,\mu}\delta_{\lambda,\nu}$
 for $\mu$, $\nu\in\bZ\Pi$. \newline\newline
 Then for $X\in\trUp_\lambda$ and
 $Y\in\trUm_{-\lambda}$,
 we have 
\begin{equation} \label{eqn:DrinfeldQDCtwoEqn}
 \langle X, Y\rangle=f_\lambda(XY)
\quad \mbox{for}\quad 
X\in\trUp_\lambda
\quad \mbox{and}\quad 
 Y\in\trUm_{-\lambda}.
 \end{equation}
 \end{lemma}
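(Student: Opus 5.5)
We prove \eqref{eqn:DrinfeldQDCtwoEqn} by applying the second formula of Lemma~\ref{lemma:DrinfeldQDCone} to $XY$ and then applying $f_\lambda$. First we check that $f_\lambda$ is well defined. By (Ba2), $\trU$ is naturally identified with $\trUm\otimes\trUo\otimes\trUp$, so every $Z\in\trU$ is a sum of monomials $F\,\trK_\mu\trL_\nu\,E$. Conditions (i)–(ii) say exactly that $f_\lambda$ kills every monomial whose $\trUm$-part or $\trUp$-part lies in the augmentation ideal. On $\trUm_0\trUo\trUp_0=\trUo$ it is the functional $\trK_\mu\trL_\nu\mapsto\delta_{0,\mu}\delta_{\lambda,\nu}$, which is well defined by (Ba3).

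Next we expand the iterated coproducts. Take $X\in\trUp_\lambda$ and $Y\in\trUm_{-\lambda}$. Iterating $\Delta(\trE_i)=\trE_i\otimes1+\trK_i\otimes\trE_i$ and using that $\Delta$ preserves weights, we write
$$\Delta^{(2)}(X)=\sum X_{1,u}\otimes X_{2,u}\otimes X_{3,u},$$
where each term has the form $X'\trK_{\beta+\gamma}\otimes X''\trK_{\gamma}\otimes X'''$ with $X'\in\trUp_\alpha$, $X''\in\trUp_\beta$, $X'''\in\trUp_\gamma$ and $\alpha+\beta+\gamma=\lambda$. Likewise, from $\Delta(\trF_i)=\trF_i\otimes\trL_i+1\otimes\trF_i$, the terms of $\Delta^{(2)}(Y)$ have the form $Y'\otimes Y''\trL_{\alpha'}\otimes Y'''\trL_{\alpha'+\beta'}$ with $Y'\in\trUm_{-\alpha'}$, $Y''\in\trUm_{-\beta'}$, $Y'''\in\trUm_{-\gamma'}$. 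In each case the extreme terms are $X\otimes1\otimes1+\cdots+\trK_\lambda\otimes\trK_\lambda\otimes X$ and the analogous ones for $Y$.

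Now apply $f_\lambda$ to $XY=\sum\langle X_{1,u},Y_{1,v}\rangle\langle S^{-1}(X_{3,u}),Y_{3,v}\rangle\,Y_{2,v}X_{2,u}$. The middle factor is $Y''\trL_{\alpha'}X''\trK_\gamma$. After commuting the Cartan element to the middle, which only rescales, this is of the form $Y''(\text{Cartan})X''$. By (i), $f_\lambda$ vanishes on it unless $\beta=\beta'=0$, i.e.\ unless $Y''$ and $X''$ are scalars. Since the pairing respects weights (DP3 and the displayed formula before Lemma~\ref{lemma:DrinfeldQDCone}), the factor $\langle X',Y'\rangle$ forces $\alpha=\alpha'$, and similarly $\gamma=\gamma'$. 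The surviving terms are therefore indexed by splittings $\lambda=\alpha+\gamma$ and give
$$\langle X'\trK_\gamma,Y'\rangle\,\langle S^{-1}(X'''),Y'''\trL_{\lambda}\rangle\,f_\lambda(\trL_{\alpha}\trK_\gamma).$$
Condition (ii) forces $\gamma=0$ and $\alpha=\lambda$. Hence only the term $X_1=X$, $X_2=X_3=1$ and $Y_1=Y$, $Y_2=1$, $Y_3=\trL_\lambda$ survives. Its value is $\langle X,Y\rangle\cdot\varepsilon(\trL_\lambda)\cdot f_\lambda(\trL_\lambda)=\langle X,Y\rangle$, using (DP2).

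The main obstacle is the bookkeeping that guarantees no other term contributes. One must make sure that the middle term of $\Delta^{(2)}(X)$ for a pure $\trUp$-part is exactly $1$ only when $\alpha=\lambda$, and must track the Cartan factors $\trK_{\beta+\gamma}$ and $\trL_{\alpha'}$ precisely, because $f_\lambda$ reads off the $\trL$-exponent. The cleanest route is to first prove, by induction on $|\lambda|$ using only the generators' coproducts, the leading-term statement
$$\Delta^{(2)}(X)\in X\otimes1\otimes1+\sum_{\alpha\ne\lambda}\trUp_\alpha\trUo\otimes\trUp\trUo\otimes\trUp,$$
together with its analogue for $Y$, and then invoke (i)–(ii).
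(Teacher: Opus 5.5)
Your argument is correct and follows the paper's route. The paper derives the lemma in one line from the second formula of Lemma~\ref{lemma:DrinfeldQDCone}, and you supply the weight bookkeeping and the well-definedness of $f_\lambda$ via the triangular decomposition, which the paper leaves implicit. One small slip: in the surviving term the middle factor is $Y_2=\trL_\lambda$ (only its $\trUm$-part is $1$), not $Y_2=1$; this is exactly why $f_\lambda(Y_2X_2)=f_\lambda(\trL_\lambda)=1$ in your final evaluation.
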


Using \eqref{eqn:DrinfeldQDCtwoEqn},
by \eqref{eqn:commEFkdelali}
(resp. \eqref{eqn:nrmalFhdTwo}), we have
the following \eqref{eqn:pairndelta} (resp. \eqref{eqn:pairndelta}).
\begin{equation} \label{eqn:pairEFkdelali}
\langle \trE_{k\delta+\al_i},\trF_{k\delta+\al_i}\rangle
=q^{-4k}(q^2-1)^{2k}
\quad (i\in I,\,k\in\bZgeqo).
\end{equation}
\begin{equation} \label{eqn:pairndelta}
\langle\tE_{k\delta},\dtF_{r\delta}\rangle
=
\delta_{k,r}{\frac {[2k]_q} k}
\quad (k,r\in\bN).
\end{equation}
By \eqref{eqn:nrmalFhdThree} and \eqref{eqn:pairndelta}, we have
\begin{equation} \label{eqn:pairnNODOTdelta}
\langle\tE_{k\delta},\tF_{r\delta}\rangle
=
\delta_{k,r}{\frac {[2k]_q} k}\cdot {\frac {(q^2-1)^{2k-1}} {q^{4k-1}}}
\quad (k,r\in\bN).
\end{equation}
\newcommand{\oprod}{\mathop{\overset{\rightarrow}{\prod}}\limits}
\newcommand{\loprod}{\mathop{\overset{\leftarrow}{\prod}}\limits}
\newcommand{\mfP}{{\mathfrak{P}}^{{\mathrm{fin}}}}
\newcommand{\mmfP}[1]{\mfP(#1)}

\subsection{PBW-Theorem}
\label{section:PBW-TH}

For a subset $M$ of $\bZ$, let $\mmfP{M}$
be the subset of the power set ${\mathfrak{P}}(\bZ)$
of $\bZ$ composed of all the finite subsets of $\bZ$,
where note $\emptyset\in\mmfP{M}$.

For ${\mathcal{Y}}\in\mmfP{\bZ}$,
let the symbols $\oprod_{y\in{\mathcal{Y}}}$
and $\loprod_{y\in{\mathcal{Y}}}$ mean as in the 
following $(Prod1)$-$(Prod2)$.
If ${\mathcal{Y}}\ne\emptyset$, let $X_y\in\trU$ $(y\in{\mathcal{Y}})$.
\newline
$(Prod1)$ \quad If ${\mathcal{Y}}=\emptyset$,
define $\oprod_{y\in{\mathcal{Y}}} X_y= 
\loprod_{y\in{\mathcal{Y}}} X_y=1_\trU$. \newline 
$(Prod2)$  \quad If ${\mathcal{Y}}\ne\emptyset$,
then letting $z\in{\mathcal{Y}}$ be such that 
$z\geq y$ for all $y\in{\mathcal{Y}}$,  
\begin{equation*}
\mbox{define \,\, $\oprod_{y\in{\mathcal{Y}}} X_y
=(\oprod_{t\in{\mathcal{Y}}\setminus\{z\}} X_t)X_z$
 \,\, and \,\, $\loprod_{y\in{\mathcal{Y}}} X_y
=X_z(\loprod_{t\in{\mathcal{Y}}\setminus\{z\}} X_t)$  \,\, in an induction with $|{\mathcal{Y}}|$.}
\end{equation*}
Examples are as follows. Let ${\mathcal{Y}}=\{2,5,8,9,12\}$. Then 
we mean $\oprod_{y\in{\mathcal{Y}}} X_y
=X_2X_5X_8X_9X_{12}$ and
$\loprod_{y\in{\mathcal{Y}}} X_y
=X_{12}X_9X_8X_5X_2$.
Moreover, letting $k_2=3$, $k_5=0$, $k_8=2$, 
$k_9=1$, $k_{12}=4$ $(\in\bZgeqo)$,
we mean
 $\oprod_{y\in{\mathcal{Y}}} X_y^{k_y}
=X_2^3X_8^2X_9X_{12}^4$ and
$\loprod_{y\in{\mathcal{Y}}} X_y^{k_y}
=X_{12}^4X_9X_8^2X_2^3$. 
\newcommand{\gvmc}[1]{{\grave{{\mathcal{#1}}}}}
\newcommand{\acmc}[1]{{\acute{{\mathcal{#1}}}}}
\newcommand{\gvone}[1]{{\grave{#1}}}
\newcommand{\acone}[1]{{\acute{#1}}}
\newcommand{\gvtwo}[2]{{\gvone{#1}}_{\gvone{#2}}}
\newcommand{\actwo}[2]{{\acone{#1}}_{\acone{#2}}}

\begin{theorem} \label{theorem:valueDrinfeldpair}
Let ${\gvmc{X}}$, ${\gvmc{Y}}$, 
${\acmc{X}}$, ${\acmc{Y}}$ ${\in\mmfP{\bZgeqo}}$
and ${\gvmc{Z}}$, ${\acmc{Z}}$ ${\in\mmfP{\bN}}$.
Let 
${\gvtwo{m}{x}}$, ${\gvtwo{r}{z}}$, ${\gvtwo{n}{y}}$, 
${\actwo{m}{x}}$, ${\actwo{r}{z}}$, ${\actwo{n}{y}}$
be an element of $\bZgeqo$
for  
${\gvone{x}}\in{\gvmc{X}}$, 
${\gvone{z}}\in{\gvmc{Z}}$, 
${\gvone{y}}\in{\gvmc{Y}}$, 
${\acone{x}}\in{\acmc{X}}$, 
${\acone{z}}\in{\acmc{Z}}$, 
${\acone{y}}\in{\acmc{Y}}$
respectively.
Then
we have
\begin{equation} \label{eqn:valueDrinfeldpairEqn}
\begin{array}{l}
\langle
\left(\oprod_{{\gvone{x}}\in{\gvmc{X}}} \trE_{{\gvone{x}}\delta+\al_1}^{{\gvtwo{m}{x}}}\right)
\cdot
\left(\oprod_{{\gvone{z}}\in{\gvmc{Z}}} \tE_{{\gvone{z}}\delta}^{{\gvtwo{r}{z}}}\right)
\cdot
\left(\loprod_{{\gvone{y}}\in{\gvmc{Y}}} \trE_{{\gvone{y}}\delta+\al_0}^{{\gvtwo{n}{y}}}\right),
\left(\oprod_{{\acone{x}}\in{\acmc{X}}} \trF_{{\acone{x}}\delta+\al_1}^{{\actwo{m}{x}}}\right)
\cdot
\left(\oprod_{{\acone{z}}\in{\acmc{Z}}} \tF_{{\acone{z}}\delta}^{{\actwo{r}{z}}}\right)
\cdot
\left(\loprod_{{\acone{y}}\in{\acmc{Y}}} \trF_{{\acone{y}}\delta+\al_0}^{{\actwo{n}{y}}}\right)\rangle \\
=\left(\delta_{{\gvmc{X}},{\acmc{X}}}\cdot
\prod\limits_{{\gvone{x}}\in{\gvmc{X}}}\left(
\delta_{{\gvtwo{m}{x}},{\acone{m}}_{\gvone{x}}}
({\gvtwo{m}{x}})_{q^2}!
\langle\trE_{{\gvone{x}}\delta+\al_1},\trF_{{\gvone{x}}\delta+\al_1}\rangle^{\gvtwo{m}{x}}
\right)\right) \\
\quad\quad\cdot
\left(\delta_{{\gvmc{Z}},{\acmc{Z}}}\cdot
\prod\limits_{{\gvone{z}}\in{\gvmc{Z}}}\left(
\delta_{{\gvtwo{r}{z}},{\acone{r}}_{\gvone{z}}}
{\gvtwo{r}{z}}!
\langle\tE_{{\gvone{z}}\delta},
\tF_{{\gvone{z}}\delta}\rangle^{\gvtwo{r}{z}}
\right)\right) \\
\quad\quad\cdot
\left(\delta_{{\gvmc{Y}},{\acmc{Y}}}\cdot
\prod\limits_{{\gvone{y}}\in{\gvmc{Y}}}\left(
\delta_{{\gvtwo{n}{y}},{\acone{n}}_{\gvone{y}}}
({\gvtwo{n}{y}})_{q^2}!
\langle\trE_{{\gvone{y}}\delta+\al_0},\trF_{{\gvone{y}}\delta+\al_0}\rangle^{\gvtwo{n}{y}}
\right)\right) \\
=\left(\delta_{{\gvmc{X}},{\acmc{X}}}\cdot
\prod\limits_{{\gvone{x}}\in{\gvmc{X}}}\left(
\delta_{{\gvtwo{m}{x}},{\acone{m}}_{\gvone{x}}}
({\gvtwo{m}{x}})_{q^2}!
q^{-4{\gvone{x}}{\gvtwo{m}{x}}}
(q^2-1)^{2{\gvone{x}}{\gvtwo{m}{x}}}
\right)\right)
\\
\quad\quad\cdot\left(
\delta_{{\gvmc{Z}},{\acmc{Z}}}\cdot
\prod\limits_{{\gvone{z}}\in{\gvmc{Z}}}\left(
\delta_{{\gvtwo{r}{z}},{\acone{r}}_{\gvone{z}}}
{\gvtwo{r}{z}}!
([2{\gvone{z}}]_q)^{{\gvtwo{r}{z}}}
{\gvone{z}}^{-{\gvtwo{r}{z}}}
q^{-{\gvtwo{r}{z}}(4{\gvone{z}}-1)}
(q^2-1)^{{\gvtwo{r}{z}}(2{\gvone{z}}-1)}
\right)\right) \\
\quad\quad\cdot
\left(\delta_{{\gvmc{Y}},{\acmc{Y}}}\cdot
\prod\limits_{{\gvone{y}}\in{\gvmc{Y}}}\left(
\delta_{{\gvtwo{n}{y}},{\acone{n}}_{\gvone{y}}}
({\gvtwo{n}{y}})_{q^2}!
q^{-4{\gvone{y}}{\gvtwo{n}{y}}}
(q^2-1)^{2{\gvone{y}}{\gvtwo{n}{y}}}
\right)\right).
\end{array}
\end{equation} 
\end{theorem}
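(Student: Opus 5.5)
The plan is the standard factorization argument for the Drinfeld pairing of PBW monomials, as in the $U_q(\hat{sl_2})$ case of \cite{LSS93, D98}. It rests on (DP1) together with the coproduct estimates of Lemmas~\ref{lemma:PosCopOne}, \ref{lemma:PosCopTwo} and \ref{lemma:NegCopOne}.

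Write the left argument as $X=X^{(1)}X^{(2)}X^{(3)}$, where $X^{(1)}$ is the $\al_1$-type factor $\oprod\trE_{x\delta+\al_1}^{m_x}$, $X^{(2)}$ is the imaginary factor $\oprod\tE_{z\delta}^{r_z}$, and $X^{(3)}$ is the $\al_0$-type factor $\loprod\trE_{y\delta+\al_0}^{n_y}$. Write $Y=Y^{(1)}Y^{(2)}Y^{(3)}$ in the same way.

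\textbf{Step 1: peel off the factors.} By (DP1) we have $\langle X, Y^{(1)}(Y^{(2)}Y^{(3)})\rangle=\langle\Delta(X),Y^{(1)}\otimes Y^{(2)}Y^{(3)}\rangle$. Expand $\Delta(X)=\Delta(X^{(1)})\Delta(X^{(2)})\Delta(X^{(3)})$. By weight considerations, only terms of the form $(\cdot)\trK_\mu\otimes(\cdot)$ whose first tensor factor has weight equal to that of $Y^{(1)}$ survive. Then use the triangularity in Lemma~\ref{lemma:PosCopOne}(2),(3) and Lemma~\ref{lemma:PosCopTwo}.
\begin{itemize}
\item The second tensor factors of $\Delta(\trE_{n\delta+\al_0})$ always carry weight $\geq$ the $\al_0$-part.
\item In the first factor of $\Delta(\trE_{n\delta+\al_1})$, the leading terms are $\trE_{n\delta+\al_1}\otimes1$ and lower terms with $\trE_{(n-k)\delta}$.
\end{itemize}
From this I will show that only the piece $X^{(1)}\otimes X^{(2)}X^{(3)}$ pairs nontrivially with $Y^{(1)}\otimes Y^{(2)}Y^{(3)}$. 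Concretely, I filter by the number of $\al_0$'s and $\al_1$'s in each tensor slot (the spaces $\mathcal{V}_{x,y}$). The weight of $Y^{(1)}$ is $\sum x m_x\delta+(\sum m_x)\al_1$, and a leftmost factor of this weight that contains only $\al_1$-type root vectors must come from the $X^{(1)}$-part.

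I expect this convexity or ordering step to be the main obstacle. It must be argued carefully using the explicit lower terms in \eqref{eqn:DelEndlalone}, because imaginary vectors $\trE_{(n-k)\delta}$ can appear in the first slot. I would handle it by induction on $|{\rm wt}|$, and I would compare against the negative-side triangularity of Lemma~\ref{lemma:NegCopOne} using the pairing orthogonality of different weights. Iterating this once more separates $X^{(2)}$ from $X^{(3)}$. The result is
\[
\langle X,Y\rangle=\langle X^{(1)},Y^{(1)}\rangle\langle X^{(2)},Y^{(2)}\rangle\langle X^{(3)},Y^{(3)}\rangle,
\]
which gives the Kronecker deltas between the three blocks.

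\textbf{Step 2: real root blocks.} For $\langle X^{(1)},Y^{(1)}\rangle$, I apply the same peeling within the block, ordered by $x$. This reduces to $\langle\trE_{x\delta+\al_1}^m,\trF_{x\delta+\al_1}^{m'}\rangle$. I transport it via $\mcT_1^{-x}$ using \eqref{eqn:TonemkEone}–\eqref{eqn:TonemkFone}, or directly via Lemma~\ref{lemma:DrinfeldQDCtwo} and \eqref{eqn:commEFkdelali}. The computation is then the rank-one $q_{ii}=q^2$ computation $\langle E^m,F^m\rangle=(m)_{q^2}!\langle E,F\rangle^m$, obtained from $\Delta(E^m)$ by the $q^2$-binomial formula. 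The $\al_0$ block is identical, or follows by applying $\Omega$.

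\textbf{Step 3: imaginary block.} Since the $\tE_{z\delta}$ commute by $\EQseven$, they are primitive modulo $\mathcal{V}_{1,1}$ (Lemma~\ref{lemma:PosCopTwo}), and $\mathcal{V}_{1,1}$ pairs to zero with pure $\delta$-weight products of $\tF$'s. Hence $\langle\prod\tE^{r_z},\prod\tF^{r'_z}\rangle$ is computed as for a polynomial algebra with primitive generators, giving $\prod\delta_{r_z,r'_z}r_z!\langle\tE_{z\delta},\tF_{z\delta}\rangle^{r_z}$ with orthogonality \eqref{eqn:pairnNODOTdelta}. Finally, substituting \eqref{eqn:pairEFkdelali} and \eqref{eqn:pairnNODOTdelta} yields the second displayed expression.
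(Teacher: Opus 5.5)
Your proposal is correct and follows essentially the same route as the paper: (DP1) combined with the triangular coproduct estimates of Lemmas~\ref{lemma:PosCopOne}, \ref{lemma:PosCopTwo} and \ref{lemma:NegCopOne}, applied on both the $E$-side and the $F$-side, forces the weights to match. This lets you split off one block at a time, and inside a block one maximal root vector at a time. You then finish with the rank-one computations giving $(m)_{q^2}!$ and $r!$ and substitute \eqref{eqn:pairEFkdelali} and \eqref{eqn:pairnNODOTdelta}. The only difference is organizational: you factor the full pairing into the three block pairings first, whereas the paper first pairs each single $E$-block against the full $F$-monomial and then combines.
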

\begin{proof}
For convenience of notation, we denote 
$\left(\oprod_{{\gvone{x}}\in{\gvmc{X}}} \trE_{{\gvone{x}}\delta+\al_1}^{{\gvtwo{m}{x}}}\right)
,
\left(\oprod_{{\gvone{z}}\in{\gvmc{Z}}} \tE_{{\gvone{z}}\delta}^{{\gvtwo{r}{z}}}\right)
,
\left(\loprod_{{\gvone{y}}\in{\gvmc{Y}}} \trE_{{\gvone{y}}\delta+\al_0}^{{\gvtwo{n}{y}}}\right)$ by
$E({\gvmc{X}},M_{\gvmc{X}})$, $E({\gvmc{Z}},R_{\gvmc{Z}})$, $E(\gvmc{Y},N_{\gvmc{Y}})$, where $M_{\gvmc{X}}$, $R_{\gvmc{Z}}$, $N_{\gvmc{Y}}$ denote the sets $\{\gvtwo{m}{x}|\gvone{x}\in \gvmc{X}\}$, $\{\gvtwo{r}{z}|\gvone{z}\in \gvmc{Z}\}$, $\{\gvtwo{n}{y}|\gvone{y}\in \gvmc{Y}\}$ respectively. We denote 
$\left(\oprod_{{\acone{x}}\in{\acmc{X}}} \trF_{{\acone{x}}\delta+\al_1}^{{\actwo{m}{x}}}\right)
,
\left(\oprod_{{\acone{z}}\in{\acmc{Z}}} \tF_{{\acone{z}}\delta}^{{\actwo{r}{z}}}\right)
,
\left(\loprod_{{\acone{y}}\in{\acmc{Y}}} \trF_{{\acone{y}}\delta+\al_0}^{{\actwo{n}{y}}}\right)$ by $F({\acmc{X}},M_{\acmc{X}})$, $F({\acmc{Z}},R_{\acmc{Z}})$, $F(\acmc{Y},N_{\acmc{Y}})$, where $M_{\acmc{X}}$, $R_{\acmc{Z}}$, $N_{\acmc{Y}}$ denote the sets $\{\actwo{m}{x}|\acone{x}\in \acmc{X}\}$, $\{\actwo{r}{z}|\acone{z}\in \acmc{Z}\}$, $\{\actwo{n}{y}|\acone{y}\in \acmc{Y}\}$ respectively. 

At first we claim that 
\begin{equation}\label{equ:EXFFF}\langle E(\gvmc{X},M_{\gvmc{X}}),F(\acmc{X},M_{\acmc{X}})F(\acmc{Z},M_{\acmc{Z}})F(\acmc{Y},M_{\acmc{Y}})\rangle=\delta_{\emptyset,\acmc{Z}}\delta_{\emptyset,\acmc{Y}}\delta_{\gvmc{X},\acmc{X}}\prod\limits_{\gvone{x}\in \gvmc{X}}\delta_{\gvtwo{m}{x},\acone{m}_{\gvone{x}}}\langle E_{\gvone{x}\delta+\alpha_1}^{\gvtwo{m}{x}},  F_{\gvone{x}\delta+\alpha_1}^{\gvtwo{m}{x}}\rangle.\end{equation} 
By \eqref{eqn:DelEndlalone} and $(DP1)$, it is easy to see $\gvmc{Y}$, $\gvmc{Z}$ must be $\emptyset$ if we want LHS of \eqref{equ:EXFFF} to be non-zero. Then we focus on $\langle E(\gvmc{X},M_{\gvmc{X}}),F(\acmc{X},M_{\acmc{X}})\rangle$.
We prove by induction on $|\gvmc{X}|$.  Suppose $\langle E(\gvmc{X},M_{\gvmc{X}}),F(\acmc{X},M_{\acmc{X}})\rangle\neq 0$. If $|\gvmc{X}|=1$, assume that $\gvmc{X}=\{m\}$. If $|\acmc{X}|\ge 2$, denote $\max \gvmc{X}$ by $n$, then 
$F(\acmc{X},M_{\acmc{X}})=F(\acmc{X}\backslash \{n\},M_{\acmc{X}}\backslash \{\acone{m}_{n}\})\cdot F_{n\delta+\alpha_1}^{\acone{m}_{n}}$ and $n>m$, but by $(DP1)$ and \eqref{eqn:DelEndlalone}, this is impossible, hence $\gvmc{X}=\acmc{X}$ and $M_{\gvmc{X}}=M_{\acmc{X}}$, for degree reason. Then we suppose that the claim holds for $|\gvmc{X}|\leq k$. Take $\gvmc{X}$, $M_{\gvmc{X}}$ s.t. $|\gvmc{X}|=k+1$ and $\acmc{X}$, $M_{\acmc{X}}$. Denote $\max\gvmc{X}$ by $s$, $\max \acmc{X}$ by t, then 
$E(\gvmc{X},M_{\gvmc{X}})=E(\gvmc{X}\backslash \{s\},M_{\gvmc{X}}\backslash \{\gvone{m}_s\})\cdot E_{s\delta+\alpha_1}^{\gvone{m}_s}$,
$F(\acmc{X},M_{\acmc{X}})=F(\acmc{X}\backslash \{s\},M_{\acmc{X}}\backslash \{\acone{m}_s\})\cdot E_{s\delta+\alpha_1}^{\acone{m}_s}$. 
Since $\langle E(\gvmc{X},M_{\gvmc{X}}), F(\acmc{X},M_{\acmc{X}})\rangle\neq 0$, by $(DP1)$, \eqref{eqn:DelEndlalone} and degree reason, we have $t\le s$. Similarly, by $(DP2)$ and \eqref{eqn:NegDelEndlalone}, we have $t\ge s$, then we have $s=t$. By the same argument, we can get $\gvone{m}_s=\acone{m}_s$. Then
 by induction hypothesis, we get that 
\eqref{equ:EXFFF} holds.

Similarly, we can get
\begin{equation}\label{equ:EYFFF}\langle E(\gvmc{Y},N_{\gvmc{Y}}),F(\acmc{X},M_{\acmc{X}})F(\acmc{Z},R_{\acmc{Z}})F(\acmc{Y},M_{\acmc{Y}})\rangle=\delta_{\emptyset,\acmc{X}}\delta_{\emptyset,\acmc{Z}}\delta_{\gvmc{Y},\acmc{Y}}\prod_{\gvone{y}\in \gvmc{Y}}\delta_{\gvtwo{n}{y},\acone{n}_{\gvone{y}}}\langle E_{\gvone{y}\delta+\alpha_1}^{\gvtwo{n}{y}},  F_{\gvone{y}\delta+\alpha_1}^{\gvtwo{n}{y}}\rangle.\end{equation} 

Then we claim that 
\begin{equation}\label{equ:EZFFF}\langle E(\gvmc{Z},R_{\gvmc{Z}}),F(\acmc{X},M_{\acmc{X}})F(\acmc{Z},R_{\acmc{Z}})F(\acmc{Y},M_{\acmc{Y}})\rangle=\delta_{\emptyset,\acmc{X}}\delta_{\emptyset,\acmc{Y}}\delta_{\gvmc{Z},\acmc{Z}}\prod_{\gvone{z}\in \gvmc{Z}}\delta_{\gvtwo{r}{z},\acone{r}_{\gvone{z}}}\langle E_{\gvone{z}\delta}^{\gvtwo{n}{y}},  F_{\gvone{z}\delta}^{\gvtwo{n}{y}}\rangle.\end{equation} 
By $(DP1)$ and Lemma~\ref{eqn:DeltEndel}, it is easy to see that if LHS of \eqref{equ:EZFFF} is non-zero, then $\acmc{X}$, $\acmc{Y}$ must be $\emptyset$.
Then we focus on $\langle E(\gvmc{Z},N_{\gvmc{Z}}),F(\acmc{Z},N_{\acmc{Z}})\rangle$. We prove by induction on $|\gvmc{Z}|$. Suppose $\langle E(\gvmc{Z},N_{\gvmc{Z}}),F(\acmc{Z},N_{\acmc{Z}})\rangle\neq 0$, if $|\gvmc{Z}|=1$, by similar methods as above using Lemma~\ref{eqn:DeltEndel} and $(DP1)$, we get that 
\begin{equation}
    \langle E(\gvmc{Z},R_{\gvmc{Z}}),F(\acmc{Z},R_{\acmc{Z}})\rangle=\delta_{\gvmc{Z},\acmc{Z}}\delta_{\gvtwo{r}{z},\actwo{r}{z}}\langle \tilde{E}_{\gvone{z}\delta}^{\gvtwo{r}{z}}, \tilde{F}_{\gvone{z}\delta}^{\gvtwo{r}{z}}\rangle.
\end{equation}
Similarly, if $|\acmc{Z}|=1$, then
\begin{equation}
    \langle E(\gvmc{Z},R_{\gvmc{Z}}),F(\acmc{Z},R_{\acmc{Z}})\rangle=\delta_{\gvmc{Z},\acmc{Z}}\delta_{\gvtwo{r}{z},\actwo{r}{z}}\langle \tilde{E}_{\acone{z}\delta}^{\actwo{r}{z}}, \tilde{F}_{\acone{z}\delta}^{\actwo{r}{z}}\rangle.
\end{equation}
Suppose the claim holds for $|\gvmc{Z}|=k$, we consider the case that $|\gvmc{Z}|=k+1$.
Denote $\max \gvmc{Z}$ by $s$, $\max \acmc{Z}$ by $t$, then $\langle E(\gvmc{Z},R_{\gvmc{Z}}),F(\acmc{Z},R_{\acmc{Z}})\rangle=\langle E(\gvmc{Z}\backslash\{s\},R_{\gvmc{Z}}\backslash \{\gvone{r}_s\})\cdot \tilde{E}_{s\delta}^{\gvone{r}_s},F(\acmc{Z}\backslash \{t\},R_{\acmc{Z}}\{\acone{r}_{t}\})\cdot \tilde{F}_{t\delta}^{\acone{r}_t}\rangle$. By the same method as the proof of \eqref{equ:EXFFF} using \eqref{eqn:DelEndlalzero}, \eqref{eqn:DelEndlalone}, Lemma~\ref{eqn:DeltEndel} and $(DP1)$, we get that $s=t$ and $\gvone{r}_s=\acone{r}_t$. Again by \eqref{eqn:DelEndlalzero}, \eqref{eqn:DelEndlalone}, Lemma~\ref{eqn:DeltEndel} and $(DP1)$ and induction hypothesis, we get that \eqref{equ:EZFFF} holds.

By \eqref{equ:EXFFF}, \eqref{equ:EYFFF}, \eqref{equ:EZFFF}, it is easy to get 
\begin{equation} 
\begin{array}{l}
\langle
\left(\oprod_{{\gvone{x}}\in{\gvmc{X}}} \trE_{{\gvone{x}}\delta+\al_1}^{{\gvtwo{m}{x}}}\right)
\cdot
\left(\oprod_{{\gvone{z}}\in{\gvmc{Z}}} \tE_{{\gvone{z}}\delta}^{{\gvtwo{r}{z}}}\right)
\cdot
\left(\loprod_{{\gvone{y}}\in{\gvmc{Y}}} \trE_{{\gvone{y}}\delta+\al_0}^{{\gvtwo{n}{y}}}\right),
\left(\oprod_{{\acone{x}}\in{\acmc{X}}} \trF_{{\acone{x}}\delta+\al_1}^{{\actwo{m}{x}}}\right)
\cdot
\left(\oprod_{{\acone{z}}\in{\acmc{Z}}} \tF_{{\acone{z}}\delta}^{{\actwo{r}{z}}}\right)
\cdot
\left(\loprod_{{\acone{y}}\in{\acmc{Y}}} \trF_{{\acone{y}}\delta+\al_0}^{{\actwo{n}{y}}}\right)\rangle \\
=\left(\delta_{{\gvmc{X}},{\acmc{X}}}\cdot
\prod\limits_{{\gvone{x}}\in{\gvmc{X}}}\left(
\delta_{{\gvtwo{m}{x}},{\acone{m}}_{\gvone{x}}}
\langle\trE_{{\gvone{x}}\delta+\al_1}^{\gvtwo{m}{x}},\trF_{{\gvone{x}}\delta+\al_1}^{\gvtwo{m}{x}}\rangle
\right)\right) \\
\quad\quad\cdot
\left(\delta_{{\gvmc{Z}},{\acmc{Z}}}\cdot
\prod\limits_{{\gvone{z}}\in{\gvmc{Z}}}\left(
\delta_{{\gvtwo{r}{z}},{\acone{r}}_{\gvone{z}}}
\langle\tE_{{\gvone{z}}\delta}^{\gvtwo{r}{z}},
\tF_{{\gvone{z}}\delta}^{\gvtwo{r}{z}}\rangle
\right)\right) \\
\quad\quad\cdot
\left(\delta_{{\gvmc{Y}},{\acmc{Y}}}\cdot
\prod\limits_{{\gvone{y}}\in{\gvmc{Y}}}\left(
\delta_{{\gvtwo{n}{y}},{\acone{n}}_{\gvone{y}}}
\langle\trE_{{\gvone{y}}\delta+\al_0}^{\gvtwo{n}{y}},\trF_{{\gvone{y}}\delta+\al_0}^{\gvtwo{n}{y}}\rangle
\right)\right). \label{equ:EEEFFF}
\end{array}
\end{equation}     
From \eqref{equ:EEEFFF}, we can get the first $``="$ of \eqref{eqn:valueDrinfeldpairEqn} by direct calculation. Then by \eqref{eqn:pairEFkdelali} and \eqref{eqn:pairnNODOTdelta}, we get the second $``="$ of \eqref{eqn:valueDrinfeldpairEqn}.
\end{proof}

Let ${\mathbb{B}}^+$
(resp. ${\mathbb{B}}^-$) be the 
elements of $\trUp$ (resp. $\trUm$) consisting of those assigned to the left (resp. right)
components of $\langle\,,\,\rangle$ of
the first parts of the equations in \eqref {eqn:valueDrinfeldpairEqn}.
Let ${\mathbb{B}}^0=
\{\trK_\lambda\trL_\mu\,|\,
\lambda,\mu\in\bZ\Pi\}$.
Then ${\mathbb{B}}^0$ is the 
$\bC$-basis of $\trUo$.

\begin{theorem} 
\label{theorem:trUthPBW}
 ${\mathbb{B}}^+$
 and ${\mathbb{B}}^-$
are the $\bC$-bases of $\trUp$
 and $\trUm$,
 respectively.
\end{theorem}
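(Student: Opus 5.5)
Linear independence of ${\mathbb{B}}^+$ and ${\mathbb{B}}^-$ comes from Theorem~\ref{theorem:valueDrinfeldpair}; spanning is a separate argument by a dimension count on each weight space.

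\emph{Step 1: independence.} Theorem~\ref{theorem:valueDrinfeldpair} says the matrix of $\langle\,,\,\rangle$ between ${\mathbb{B}}^+$ and ${\mathbb{B}}^-$ is diagonal, after matching each monomial with the monomial of the same shape. Its diagonal entries are products of $(m)_{q^2}!$, $r!$, powers of $q^{-4x}(q^2-1)^{2x}$, and powers of $[2z]_q z^{-1}q^{-(4z-1)}(q^2-1)^{2z-1}$. Under the standing assumption $q(q^n-1)\ne0$ for all $n\in\bN$, all of these are nonzero: $(m)_{q^2}=(q^{2m}-1)/(q^2-1)\neq 0$ and $[2z]_q\neq 0$. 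Suppose $\sum_b c_b b=0$ is a finite relation among elements $b\in{\mathbb{B}}^+$. Pairing it with the partner $b'\in{\mathbb{B}}^-$ of each $b$ gives $c_b\langle b,b'\rangle=0$, hence $c_b=0$. So ${\mathbb{B}}^+$ is linearly independent, and symmetrically so is ${\mathbb{B}}^-$. Both sets lie in weight spaces: by (Ba5), ${\mathbb{B}}^+\cap\trUp_\lambda$ lies in $\trUp_\lambda$.

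\emph{Step 2: spanning, which I expect to be the main obstacle.} Independence alone does not give spanning, because $\trUp_\lambda$ might be larger than the span of ${\mathbb{B}}^+\cap\trUp_\lambda$. I would prove directly that the span of ${\mathbb{B}}^+$, call it ${\mathcal{S}}$, equals $\trUp$. It is enough to show ${\mathcal{S}}$ is stable under left multiplication by $E_0$ and $E_1$, since $1\in{\mathcal{S}}$.

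For $E_1=\trE_{\al_1}$ this is a convex-order straightening argument. Right multiplication by $\trE_{\al_0}$ is the mirror case, and applying the anti-automorphism $\psi$ converts it into left multiplication by $E_1$ on reversed monomials. Equivalently, I will show that any product of root vectors $\trE_{x\delta+\al_1}$, $\tE_{z\delta}$, $\trE_{y\delta+\al_0}$ can be rewritten, modulo ${\mathcal{S}}$, into ordered form. The ordering uses the relations already proved:
\begin{itemize}
\item $\EQone$, $\EQtwo$ reorder the $\trE_{x\delta+\al_1}$ among themselves, with exponents handled via the $q$-commutation;
\item $\EQnine$, $\EQten$ move a $\tE_{k\delta}$ past the real root vectors;
\item $\EQseven$ makes the $\tE_{k\delta}$ commute with each other (the modification \eqref{eqn;DifEkdeltilEkdel} preserves this);
\item $\EQeight$ together with \eqref{eqn;DifEkdeltilEkdel} rewrites $\trE_{y\delta+\al_0}\trE_{x\delta+\al_1}$ in terms of imaginary root vectors.
\end{itemize}
Termination would follow by induction on $|\lambda|$ together with a lexicographic count of inversions. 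Each relation either reduces the number of factors or replaces an out-of-order pair by an in-order pair with strictly "more central" root vectors, by convexity of the order $\al_1<\delta+\al_1<\cdots<\delta,2\delta<\cdots<\delta+\al_0<\al_0$. This is the delicate part: $\EQtwo$ only gives relations for the pairs $(n+r,n)$ in combination with $(n+1,n+r-1)$, so an induction on $r$ is needed to express $\trE_{(n+r)\delta+\al_1}\trE_{n\delta+\al_1}$ in ordered form.

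If the straightening turns out too messy, my fallback is a dimension argument. The elements $\mcT_1^{\pm k}(E_1)$ and the Lusztig automorphisms give surjections from an associated graded of $\trUp_\lambda$ onto symmetric-algebra-type spaces, so $\dim\trUp_\lambda$ is at most the Kostant-type partition count of $\lambda$. That count equals $|{\mathbb{B}}^+\cap\trUp_\lambda|$, and combined with Step~1 this forces equality.

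Finally, ${\mathbb{B}}^-$ is handled identically using $\FQone$–$\FQten$, or by transporting the result through $\Omega$.
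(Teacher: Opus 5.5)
Your proposal is correct and follows the paper's proof. Linear independence comes from the diagonal Drinfeld pairing of Theorem~\ref{theorem:valueDrinfeldpair}, whose diagonal entries are nonzero because $q(q^n-1)\neq 0$. Spanning comes from showing that the span of $\mathbb{B}^+$ is stable under left multiplication by $E_0,E_1$, using $\EQone$--$\EQeight$ and \eqref{eqn;DifEkdeltilEkdel}.

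A few small corrections:
\begin{itemize}
\item Left multiplication by $E_1=\trE_{\al_1}$ is trivial, since $\trE_{\al_1}$ is already the leftmost factor of every ordered monomial.
\item The real straightening is needed for left multiplication by $E_0$, or equivalently, via $\psi$, right multiplication by $E_1$. Your general claim that any product can be rewritten in ordered form does cover this case.
\item You left out $\EQthree$ and $\EQfour$ from your list; they are needed to reorder the $\trE_{y\delta+\al_0}$.
\item You should drop the dimension-count fallback. Surjections out of $\mathrm{gr}\,\trUp_\lambda$ give lower bounds on $\dim\trUp_\lambda$, not the upper bound you need.
\end{itemize}
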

\begin{proof}
By Theorem~\ref{theorem:valueDrinfeldpair},
${\mathbb{B}}^+$ is linearly
independent over $\bC$.
Let $X=\oplus_{b\in{\mathbb{B}}^+}\bC b$
$(\subset\trUp)$.
By $\EQone$-$\EQeight$
and \eqref{eqn;DifEkdeltilEkdel},
we see that $\trE_i\cdot X\subset X$
for $i\in I$. Hence 
$X=\trUp$. Therefore
${\mathbb{B}}^+$ is the $\bC$-basis of $\trUp$.
Similarly we see that 
${\mathbb{B}}^-$ is the $\bC$-basis of $\trUm$.
\end{proof}

\subsection{Main theorem} \label{subsection:Aform}
In Subsection~\ref{subsection:Aform},
assume ${\mathrm{tr.deg}}(\bQ(q,a)/\bQ)=2$.
Define the bijection
$\eta:{\mathbb{B}}^+
\to{\mathbb{B}}^-$
by
$\langle B,\eta(B)\rangle\ne 0$. Define
the map $\Gamma:{\mathbb{B}}^+\to\bA$
by 
$\Gamma(B)=\langle B,\eta(B)\rangle$
$(B\in{\mathbb{B}}^+)$.

Let $\bA$ be the $\bQ$-subalgebra of 
$\bC$ generated by 
$a^{\pm 1}$, $q^{\pm 1}$ and
${\frac 1 {q^2-1}}$, i.e.  
\begin{equation*}
\bA=\bQ[a,a^{-1},q,q^{-1},{\frac 1 {q-q^{-1}}}].
\end{equation*}
Let $\trU_\bA$ 
be the $\bA$-subalgebra 
(with $1$) of
$\trU$
defined as
$\langle
\trK_i^{\pm 1}, \trL_i^{\pm 1}, 
\trE_i, \trF_i
\,|\,i\in I\rangle_\bA$.
We use a similar notation to 
${\rm{(Ba1)}}$ with $\trU_\bA$
instead of $\trU$.
Then we have similar facts
to ${\rm{(Ba2)}}$ (use \eqref{eqn:defEiEjKiLi}) and ${\rm{(Ba3)}}$
for $\trU_\bA$.
Note that
${\mathbb{B}}^0$ is the $\bA$-base of $\trUo_\bA$.
Note also that
${\mathbb{B}}^+$ and ${\mathbb{B}}^-$
are the $\bA$-bases of $\trUp_\bA$
and $\trUm_\bA$, respectively,
which can be proved in the same way as 
for the proof of Theorem~\ref{theorem:trUthPBW}.

Let $\zeta$, 
$\omega\in\bCt$.
Assume $\zeta^2\ne 1$.
Define the $\bQ$-algebra homomorphism $\theta_{\zeta,\omega}:\bA\to \bC$
by $\theta_{\zeta,\omega}(q)=\zeta$
and $\theta_{\zeta,\omega}(a)=\zeta^{-2}\omega$.
Let $\trmcU=\trmcU_{\zeta,\omega}$ be the Hopf algebra obtained as the specialization:
\begin{equation*}
\trmcU
=\trU_\bA\otimes\bC,\quad
(x\cdot 1_\bA)\otimes 1=1_\bA\otimes \theta_{\zeta,\omega}(x)\,\,(x\in\bA). 
\end{equation*} 
Define the
$\bA$-module homomorphism $g_{\zeta,\omega}:\trU_\bA\to\trmcU$
by $g_{\zeta,\omega}(X)=X\otimes 1$
$(X\in\trU_\bA)$.
We use a similar notation to 
${\rm{(Ba1)}}$
and ${\rm{(Ba5)}}$
with $\trmcU$
instead of $\trU$.
Then we have similar facts
to ${\rm{(Ba2)}}$ and ${\rm{(Ba3)}}$
for $\trmcU$.
Note that
$g_{\zeta,\omega}({\mathbb{B}}^0)$, 
$g_{\zeta,\omega}({\mathbb{B}}^+)$, 
$g_{\zeta,\omega}({\mathbb{B}}^-)$
are the $\bC$-bases
of 
$\trmcUo$, 
$\trmcUp$, $\trmcUm$,
respectively.

We also denote by $\langle\,,\,\rangle$
the bilinear map from 
$\trmcUpgeq\times\trmcUmleq$ to $\bC$ induced by the one of
\eqref{eqp:DEFpair}.
For linear subspaces $X$ and $Y$ of $\trmcUpgeq$ and $\trmcUmleq$
over $\bC$ respectively, 
let ${\widetilde{\trmcJpgeq}}(X,Y)=
\{x\in X\,|\,\forall y\in Y,\,
\langle x,y\rangle=0\}$
and ${\widetilde{\trmcJmleq}}(X,Y)=
\{y\in Y\,|\,\forall x\in X,\,
\langle x,y\rangle=0\}$.
Let 
\begin{equation*}
\begin{array}{l}
\trmcJpgeq=
{\widetilde{\trmcJpgeq}}(\trmcUpgeq,\trmcUmleq),\,\trmcJmleq=
{\widetilde{\trmcJmleq}}(\trmcUpgeq,\trmcUmleq),\\
\trmcJ^\prime=\rrmSpan{\bC}{\trmcJpgeq\trmcUmleq},\,
\trmcJ^{\prime\prime}=\rrmSpan{\bC}{\trmcUpgeq\trmcJmleq}, \\
\trmcJ=\trmcJ^\prime+\trmcJ^{\prime\prime},
\\
\trmcJp={\widetilde{\trmcJpgeq}}(\trmcUp,\trmcUm),\,\trmcJm={\widetilde{\trmcJmleq}}(\trmcUp,\trmcUm),
\\
\trmcJogeq=
{\widetilde{\trmcJpgeq}}(\trmcUogeq,\trmcUoleq),\,
\trmcJoleq={\widetilde{\trmcJmleq}}(\trmcUogeq,\trmcUoleq), \\
\trmcJo=\rrmSpan{\bC}{\trmcJogeq\trmcUoleq}+\rrmSpan{\bC}{\trmcUogeq\trmcJoleq}\,.
\end{array}
\end{equation*}
Then
$\trmcJpgeq$,
$\trmcJmleq$, 
$\trmcJ$, 
$\trmcJp$, $\trmcJm$, 
$\trmcJogeq$, $\trmcJoleq$,
and $\trmcJo$
are the two-sided ideals of
$\trmcUpgeq$,
$\trmcUmleq$, 
$\trmcU$, 
$\trmcUp$, $\trmcUm$, 
$\trmcUogeq$, $\trmcUoleq$,
and $\trmcUo$,
respectively,
while $\trmcJ^\prime$
and 
$\trmcJ^{\prime\prime}$
are also the 
two-sided ideals
of $\trmcU$.
Let 
$\trmfkU=
\trmfkU_{\zeta,\omega}=
\trmcU/\trmcJ$
(the quotient Hopf algebra),
and let ${\bar{g}}_{\zeta,\omega}:\trmcU\to\trmfkU$
mean the canonical map.
That is, ${\bar{g}}_{\zeta,\omega}$ is the Hopf algebra epimorphism
defined by ${\bar{g}}_{\zeta,\omega}(u)=u+\trmcJ$
$(u\in\trmcU)$.
We use a similar notation to 
${\rm{(Ba1)}}$ and ${\rm{(Ba5)}}$ with $\trmfkU$
instead of $\trU$,
and we have a similar fact
to ${\rm{(Ba2)}}$
for $\trmfkU$.
Then 
$\trmfkUpgeq$,
$\trmfkUmleq$,  
$\trmfkUp$, $\trmfkUm$, 
$\trmfkUogeq$, $\trmfkUoleq$
and $\trmfkUo$
can be naturally identified with
$\trmcUpgeq/\trmcJpgeq$,
$\trmcUmleq/\trmcJmleq$,
$\trmcUp/\trmcJp$,
$\trmcUm/\trmcJm$,
$\trmcUogeq/\trmcJogeq$,
$\trmcUoleq/\trmcJoleq$
and $\trmcUo/\trmcJo$,
respectively,
as $\bC$-algebras.
We also denote by $\langle\,,\,\rangle$
the bilinear map from 
$\trmfkUpgeq\times\trmfkUmleq$ to $\bC$ induced by the above one.
Then $\langle\,,\,\rangle$,
$\langle\,,\,\rangle_{|\trmfkUogeq\times\trmfkUoleq}$,
$\langle\,,\,\rangle_{|\trmfkUp\times\trmfkUm}$
and 
$\langle\,,\,\rangle_{|\trmfkUp_\lambda\times\trmfkUm_{-\lambda}}$
$(\lambda\in\bZgeqo\Pi)$
are non-degenerate.
For $\lambda\in\bZgeqo\Pi$, 
let $\trmfkC_\lambda$ $(\in \trmfkUp_\lambda
\otimes\trmfkUm_{-\lambda})$
be the canonical element of 
$\trmfkUp_\lambda\times\trmfkUm_{-\lambda}$
with respect to 
$\langle\,,\,\rangle_{|\trmfkUp_\lambda\times\trmfkUm_{-\lambda}}$.

For
$k\in\bZgeqo$, let 
$\trmfkU[\geq k]=
\oplus_{\lambda\in\bZgeqo\Pi,|\lambda|\geq k}
\rrmSpan{\bC}{\trmfkUm\trmfkUo\trmfkUp_\lambda}$.
Define the completion $\hattrmfkU$ of $\trmfkU$
by $\varprojlim\limits_k\trmfkU/\trmfkU[\geq k]$.
Similarly, for $n\in\bN$, 
define the completion $\hattrmfkU^{{\hat{\otimes}} n}=\hattrmfkU{\hat{\otimes}}\cdots {\hat{\otimes}}\hattrmfkU$ ($n$-times) of $\trmfkU^{\otimes n}$ 
(and also $\hattrmfkU^{\otimes n}$ ) by
\begin{equation} \label{eqqn;multicomp}
\hattrmfkU^{{\hat{\otimes}} n}
=\varprojlim\limits_k\trmfkU^{\otimes n}/{\mathfrak{F}}^{(n)}_k,
\quad\mbox{where we let
${\mathfrak{F}}^{(n)}_k=\sum_{r_1+\cdots+r_n=k}\trmfkU[\geq r_1 ]\otimes\cdots\otimes\trmfkU[\geq r_n]$.}
\end{equation}
We can view $\trmfkU^{\otimes n}\subset \hattrmfkU^{\otimes n}
\subset\hattrmfkU^{{\hat{\otimes}}n}$.
For $k\in\bZgeqo$, let
$\trp^{(n)}_k:\hattrmfkU^{{\hat{\otimes}}n}\to\trmfkU^{\otimes n}/{\mathfrak{F}}^{(n)}_k$
denote the natural projection onto the k-th component.
We can also regard 
$\hattrmfkU^{{\hat{\otimes}}n}$
as the complete metric space
with the metric $d(\,,\,)$
defined by
\begin{equation*}
d(x,y)
=\left\{
\begin{array}{ll}
0 & \quad (\mbox{if $x=y$}), \\
2^{-k} & 
\quad (\mbox{if 
$\trp^{(n)}_k(x-y)= 0$
and $\trp^{(n)}_{k+1}(x-y)\ne 0$ fo some $k\in\bZgeqo$})
\end{array}\right.
\end{equation*} for $x,y\in\hattrmfkU^{{\hat{\otimes}}n}$.
Let ${\hat{\Delta}}:\hattrmfkU
\to\hattrmfkU{\hat{\otimes}}\hattrmfkU$ and ${\hat{\varepsilon}}:\hattrmfkU
\to\bC$ be the
continuous $\bC$-algebra homomorphisms
induced from $\Delta$ and $\varepsilon$,
respectively. 
Define the continuous $\bC$-algebra isomorphism
${\hat{\tau}}:\hattrmfkU{\hat{\otimes}}\hattrmfkU\to\hattrmfkU{\hat{\otimes}}\hattrmfkU$ by
${\hat{\tau}}(X\otimes Y)=Y\otimes X$
$(X,Y\in\trmfkU)$.
Then we have the topological
bi-algebra $(\hattrmfkU,{\hat{\Delta}},{\hat{\varepsilon}})$.
Let
\begin{equation*}
\hattrmfkC=\sum_{\lambda\in\bZgeqo\Pi} \trmfkC_\lambda
\quad (\in\hattrmfkU{\hat{\otimes}}\hattrmfkU).
\end{equation*}
Define
the map ${\bar{\Gamma}}_{\zeta,\omega}:{\mathbb{B}}^+\to\bC$
by 
${\bar{\Gamma}}_{\zeta,\omega}(B)=\theta_{\zeta,\omega}(\Gamma(B))$ $(B\in{\mathbb{B}}^+)$.
Let ${\mathbb{B}}^+_{\zeta,\omega}=
\{B\in{\mathbb{B}}^+\,|\,{\bar{\Gamma}}_{\zeta,\omega}(B)\ne 0\}$.

Let ${\hat{g}}_{\zeta,\omega}={\bar{g}}_{\zeta,\omega}
\circ g_{\zeta,\omega}$.

\begin{theorem}\label{theorem:mainone}
We have
\begin{equation*}
\begin{array}{lcl}
\hattrmfkC & =& \sum\limits_{
\mbox{$B\in{\mathbb{B}}^+_{\zeta,\omega}$}}
{\frac 1 {{\bar{\Gamma}}_{\zeta,\omega}(B)}}{\hat{g}}_{\zeta,\omega}(B)\otimes{\hat{g}}_{\zeta,\omega}(\eta(B)) \\
 & =& 
 \left(\oprod_{
\mbox{
$x\in\bZgeqo$}
 } 
 \left(\sum\limits_{(m_x)_{\zeta^2}!\ne 0}{\frac 
 {\zeta^{4xm_x}} {(m_x)_{\zeta^2}!(\zeta^2-1)^{2xm_x}}}\left({\hat{g}}_{\zeta,\omega}(\trE_{x\delta+\al_1}^{m_x})
 \otimes {\hat{g}}_{\zeta,\omega}(\trF_{x\delta+\al_1}^{m_x})
 \right)\right)\right) \\
 &  & \cdot
 \left(\oprod_{
\mbox{
$z\in\bN,\zeta^{4z}\ne 1$}
 } 
 \left(\sum\limits_{r_z=0}^\infty
 {\frac 1 {r_z!}}\left({\frac 
 {z\zeta^{4z-1}} {[2r_z]_\zeta(\zeta^2-1)^{2z-1}}}\right)^{r_z}
 \left({\hat{g}}_{\zeta,\omega}(\tE_{z\delta}^{r_z})
 \otimes{\hat{g}}_{\zeta,\omega}(\tF_{z\delta}^{r_z})
 \right)\right)\right) \\
 & & 
 \cdot\left(\loprod_{
\mbox{
$y\in\bZgeqo$}
 } 
 \left(\sum\limits_{(n_y)_{\zeta^2}!\ne 0}{\frac 
 {\zeta^{4yn_y}} {(n_y)_{\zeta^2}!(\zeta^2-1)^{2yn_y}}}\left({\hat{g}}_{\zeta,\omega}(\trE_{y\delta+\al_0}^{n_y})
 \otimes{\hat{g}}_{\zeta,\omega}(\trF_{y\delta+\al_0}^{n_y})
 \right)\right)\right).
\end{array}
\end{equation*}
\end{theorem}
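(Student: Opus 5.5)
The plan is to compute the canonical element $\trmfkC_\lambda$ of each weight space directly, using the PBW bases ${\mathbb{B}}^\pm$ together with the orthogonality of Theorem~\ref{theorem:valueDrinfeldpair}, and then to reassemble the pieces over $\lambda$ into the ordered product.

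\emph{Step 1 (canonical element in each weight).} Fix $\lambda\in\bZgeqo\Pi$. By Theorem~\ref{theorem:valueDrinfeldpair} over $\bA$, the matrix $(\langle B,B'\rangle)_{B\in{\mathbb{B}}^+_\lambda,B'\in{\mathbb{B}}^-_{-\lambda}}$ is monomial, with nonzero entries exactly at $B'=\eta(B)$ and value $\Gamma(B)\in\bA$. Since $g_{\zeta,\omega}({\mathbb{B}}^\pm)$ are $\bC$-bases of $\trmcU^\pm$ and the pairing is $\bA$-linear, specialization gives the same monomial matrix with entries ${\bar{\Gamma}}_{\zeta,\omega}(B)$ on $\trmcUp_\lambda\times\trmcUm_{-\lambda}$. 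Hence the radical $\trmcJp\cap\trmcUp_\lambda$ is spanned by $g_{\zeta,\omega}(B)$ with $B\notin{\mathbb{B}}^+_{\zeta,\omega}$, and similarly on the minus side. So the images ${\hat{g}}_{\zeta,\omega}(B)$, ${\hat{g}}_{\zeta,\omega}(\eta(B))$ for $B\in{\mathbb{B}}^+_{\zeta,\omega}$ of weight $\lambda$ form bases of $\trmfkUp_\lambda$ and $\trmfkUm_{-\lambda}$. These bases are dual up to the scalars ${\bar{\Gamma}}_{\zeta,\omega}(B)$, and (Df2) gives $\trmfkC_\lambda=\sum {\bar{\Gamma}}_{\zeta,\omega}(B)^{-1}{\hat{g}}(B)\otimes{\hat{g}}(\eta(B))$. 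Summing over $\lambda$ yields the first equality; the sum converges in $\hattrmfkU{\hat\otimes}\hattrmfkU$ because $\trmfkC_\lambda\in{\mathfrak{F}}^{(2)}_{|\lambda|}$.

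\emph{Step 2 (factorization).} Write each $B\in{\mathbb{B}}^+$ as $E({\gvmc X},M)E({\gvmc Z},R)E({\gvmc Y},N)$, with $\eta(B)$ the correspondingly ordered $F$-monomial. By the explicit value in Theorem~\ref{theorem:valueDrinfeldpair}, $\Gamma(B)$ is a product of local factors, one for each root vector and its exponent. The coefficient $1/{\bar\Gamma}(B)$ is therefore multiplicative, and $B\in{\mathbb{B}}^+_{\zeta,\omega}$ holds iff every local factor is nonzero after specialization. For real roots that means $(m_x)_{\zeta^2}!\ne0$, since $\zeta^2-1\neq0$. For imaginary roots it means $r_z!\,([2z]_\zeta/z)^{r_z}\ne0$, i.e.\ $r_z=0$ or $\zeta^{4z}\ne1$. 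Expanding the stated ordered product of sums in $\hattrmfkU{\hat\otimes}\hattrmfkU$ (using $(X\otimes Y)(X'\otimes Y')=XX'\otimes YY'$) then reproduces term by term $\sum_B {\bar\Gamma}(B)^{-1}{\hat g}(B)\otimes{\hat g}(\eta(B))$. This works because the ordering of factors in the product matches the PBW ordering $\oprod$, $\oprod$, $\loprod$ on both tensor sides simultaneously. The scalars are then read off from \eqref{eqn:pairEFkdelali} and \eqref{eqn:pairnNODOTdelta}, giving $\zeta^{4xm}/((m)_{\zeta^2}!(\zeta^2-1)^{2xm})$ and $\frac{1}{r!}\bigl(z\zeta^{4z-1}/([2z]_\zeta(\zeta^2-1)^{2z-1})\bigr)^r$. (I read the displayed $[2r_z]_\zeta$ as $[2z]_\zeta$.) Convergence of the infinite ordered products is clear, since only finitely many factors contribute below any given degree.

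\emph{Main obstacle.} The delicate point is Step 1's passage to the specialization. Theorem~\ref{theorem:valueDrinfeldpair} is proved over $\bC$ with generic $q$, but here we need it over $\bA$: the PBW elements must lie in $\trU_\bA$, and the pairing values must lie in $\bA$ and be given by the same formula. I would first verify that $\tE_{z\delta},\tF_{z\delta}\in\trU_\bA$. The logarithm in their definition introduces $1/k$, but these coefficients lie in $\bQ\subset\bA$, so this works. Then, since the pairing is determined by (DP1)--(DP3) integrally, the identity of Theorem~\ref{theorem:valueDrinfeldpair} is an identity in $\bA$ and survives $\theta_{\zeta,\omega}$. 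With that in hand, the identification $\trmfkUp=\trmcUp/\trmcJp$ stated before the theorem makes the quotient basis claim immediate.
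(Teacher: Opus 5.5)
Your Steps~1 and~2 follow the route the paper intends. The paper gives no separate proof: the statement is read off from Theorem~\ref{theorem:valueDrinfeldpair}, the $\bA$-form, and specialization. Your reading of $[2r_z]_\zeta$ as $[2z]_\zeta$ is also correct.

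The gap is in your last paragraph, where you settle integrality by saying the only denominators are the $1/k$ from the logarithm. That is not so. Each real root vector $\trE_{n\delta+\al_i}$, $\trF_{n\delta+\al_i}$ with $n\geq 1$ is defined with a factor $[2]_q^{-n}$. But $[2]_q=q^{-1}(q^2+1)$ is not a unit of $\bA=\bQ[a^{\pm1},q^{\pm1},(q-q^{-1})^{-1}]$ when $q,a$ are algebraically independent. Concretely,
\[
\trE_{\delta+\al_1}=\frac{q}{q^2+1}\,\trE_0\trE_1^2-aq\,\trE_1\trE_0\trE_1+\frac{a^2q^3}{q^2+1}\,\trE_1^2\trE_0 .
\]
The Serre relations \eqref{eqn:defEiEj} sit in degrees $3\al_i+\al_j$, so these three words form a $\bC$-basis of $\trUp_{\al_0+2\al_1}$. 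Hence $\trE_{\delta+\al_1}\notin\trUp_\bA$. Similarly $\trE_{2\delta}$ has coefficient $q/(q^2+1)$ on $\trE_0^2\trE_1^2$, so $\tE_{2\delta}=(q^2a)^{-1}\trE_{2\delta}-\frac{q-q^{-1}}{2}\trE_\delta^2\notin\trUp_\bA$, and the same happens on the negative side. Thus ${\hat{g}}_{\zeta,\omega}$ is not defined on most of ${\mathbb{B}}^\pm$, and your Step~1 has nothing to specialize. You also cannot simply cite the paper's unproved remark that ${\mathbb{B}}^\pm$ are $\bA$-bases of $\trU^\pm_\bA$, since it has the same defect.

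The repair is to work over $\bA'=\bA\bigl[[2]_q^{-1}\bigr]$. Then ${\mathbb{B}}^\pm\subset\trU_{\bA'}$. The relations $\EQone$--$\EQeight$, \eqref{eqn;DifEkdeltilEkdel} and their $F$-counterparts have coefficients in $\bA$, so the spanning argument of Theorem~\ref{theorem:trUthPBW} runs over $\bA'$ and ${\mathbb{B}}^\pm$ are $\bA'$-bases. The pairing is $\bA'$-valued on the $\bA'$-forms, and the values $\Gamma(B)$ from Theorem~\ref{theorem:valueDrinfeldpair} still lie in $\bA$.

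However, $\theta_{\zeta,\omega}$ extends to $\bA'$ only when $[2]_\zeta\neq 0$, i.e.\ $\zeta^4\neq 1$. So this condition must be added to the hypothesis $\zeta^2\neq 1$; for $\zeta^2=-1$ the right-hand side of the statement is not defined as written. The condition does hold in the odd-order setting used later (Lemma~\ref{lemma:calRzero}, Theorem~\ref{theorem:MainVrepRMat}). With this change the rest of your proof goes through as written.
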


In the Drinfeld argument, we have the main theorem:
\begin{theorem} \label{theorem:maintwo}
Assume $\dim\trmfkUo<\infty$
{\rm{(i.e. $\zeta$ and $\omega$ are roots of unity}} 
{\rm{)}}.
Let $\trmfkH$ be the canonical element
of 
$\trmfkUogeq\times\trmfkUoleq$
with respect to
$\langle\,,\,\rangle_{|\trmfkUogeq\times\trmfkUoleq}$,
where see also Lemma~{\rm{\ref{lemma:calRzero}}} below. Let
\begin{equation*}
\trmfkR=\hattrmfkC\trmfkH \quad (\in\hattrmfkU{\hat{\otimes}}\hattrmfkU).
\end{equation*}
\newline
{\rm{(1)}}
$\trmfkR$ is invertible
and we have
\begin{equation*}
\trmfkR^{-1}=\sum_{\lambda\in\bZgeqo\Pi} (S\otimes\rmid)(\trmfkC_\lambda\trmfkH).
\end{equation*}
\newline
{\rm{(2)}} We have
\begin{equation} \label{eqn:maintwoEqThree}
\trmfkR{\hat{\Delta}}(X)\trmfkR^{-1}
=({\hat{\tau}}\circ{\hat{\Delta}})(X) \quad
(X\in\hattrmfkU).
\end{equation}
\newline
{\rm{(3)}} We have
\begin{equation} \label{eqn:maintwoEqFour}
({\hat{\Delta}}\otimes\rmid)(\trmfkR)=\trmfkR_{13}\trmfkR_{23},\quad
(\rmid\otimes{\hat{\Delta}})(\trmfkR)=\trmfkR_{13}\trmfkR_{12}.
\end{equation} In particular, we have
\begin{equation} \label{eqn:maintwoEqFive}
\trmfkR_{23}\trmfkR_{23}\trmfkR_{13}
=\trmfkR_{13}\trmfkR_{23}\trmfkR_{23}
\quad (\in\hattrmfkU{\hat{\otimes}}\hattrmfkU{\hat{\otimes}}\hattrmfkU).
\end{equation}
\end{theorem}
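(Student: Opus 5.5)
The plan is the standard Drinfeld quantum double argument, applied to the nondegenerate pairing $\langle\,,\,\rangle$ on $\trmfkUpgeq\times\trmfkUmleq$. Since $\dim\trmfkUo<\infty$, the pairing restricted to $\trmfkUogeq\times\trmfkUoleq$ is nondegenerate on finite-dimensional spaces, so $\trmfkH$ is a genuine finite sum. For each $\lambda$, $\trmfkC_\lambda$ is the canonical element of the finite-dimensional pair $\trmfkUp_\lambda\times\trmfkUm_{-\lambda}$. Using (Ba2) for $\trmfkU$ and the factorization $\langle X\trK_\lambda,Y\trL_\mu\rangle=\chi(\lambda,\mu)\delta\langle X,Y\rangle$, I will first identify $\trmfkR=\hattrmfkC\trmfkH$ with the canonical element of the whole completed pair $\trmfkUpgeq\times\trmfkUmleq$. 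Concretely, I take a basis $b$ of $\trmfkUp_\lambda$ times a basis of $\trmfkUogeq$, and the dual basis in $\trmfkUm_{-\lambda}$ times the dual basis of $\trmfkUoleq$. The sum over $\lambda$ converges in $\hattrmfkU{\hat{\otimes}}\hattrmfkU$ because the $\lambda$-component lies in ${\mathfrak{F}}^{(2)}_{|\lambda|}$.

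With this identification, I prove (3) first. For $({\hat\Delta}\otimes\rmid)(\trmfkR)=\trmfkR_{13}\trmfkR_{23}$, I pair both sides in the first two legs against $Y_1\otimes Y_2\in\trmfkUmleq\otimes\trmfkUmleq$. By (DP1), $\langle\Delta(X),Y_1\otimes Y_2\rangle=\langle X,Y_1Y_2\rangle$, so both sides reduce to $Y_1Y_2$ written through the dual bases. Nondegeneracy then gives equality degreewise, i.e.\ modulo each ${\mathfrak{F}}^{(3)}_k$. The second identity uses the other half of (DP1), $\langle X_1X_2,Y\rangle=\langle X_1\otimes X_2,\tau\Delta(Y)\rangle$, in the same way.

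For (1), I set $\trmfkR'=\sum_\lambda(S\otimes\rmid)(\trmfkC_\lambda\trmfkH)$. Applying $m\circ(S\otimes\rmid)\otimes\rmid$ to $({\hat\Delta}\otimes\rmid)(\trmfkR)=\trmfkR_{13}\trmfkR_{23}$ and using the antipode axiom gives $\trmfkR'\trmfkR=(\varepsilon\otimes\rmid)(\trmfkR)\cdot 1=1$. The last equality holds because $\varepsilon$ kills every positive-degree term, and the canonical element of $\trmfkUogeq\times\trmfkUoleq$ satisfies $(\varepsilon\otimes\rmid)\trmfkH=1$ by (DP2), $\langle 1,Y\rangle=\varepsilon(Y)$. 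The identity $\trmfkR\trmfkR'=1$ is obtained symmetrically with $S^{-1}$ and (DP2).

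For (2), it suffices by continuity and multiplicativity to check $\trmfkR\Delta(X)=\tau\Delta(X)\trmfkR$ for the generators $X=\trK_i,\trL_i,\trE_i,\trF_i$. For $\trK_i$ and $\trL_i$ this follows from weight considerations, since $\trmfkC_\lambda$ has weight $(\lambda,-\lambda)$ and $\trmfkH$ lies in the commutative $\trmfkUo\otimes\trmfkUo$. For $\trE_i$ and $\trF_i$, I use Lemma~\ref{lemma:DrinfeldQDCone}: the exchange relation $YX=\sum\langle S^{-1}X_1,Y_1\rangle\langle X_3,Y_3\rangle X_2Y_2$ is exactly the statement that the canonical element intertwines $\Delta$ and $\tau\Delta$. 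Once $\trmfkR$ is known to be the canonical element, the standard double argument yields this. Finally, the Yang--Baxter equation \eqref{eqn:maintwoEqFive} follows from (2) and (3): apply (2) with $\trmfkR$ in place of $X$ to $({\hat\Delta}\otimes\rmid)(\trmfkR)$ and rewrite both sides using (3).

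I expect the main obstacle to be the first step: checking that $\hattrmfkC\trmfkH$ really is the canonical element of the full pair, with $\trmfkH$ on the right in exactly the normalization of $\langle\trK_\lambda,\trL_\mu\rangle=\chi(\lambda,\mu)$. This requires the nondegeneracy of the pairing on $\trmfkUogeq\times\trmfkUoleq$ at roots of unity. The nondegeneracy is asserted before Lemma~\ref{lemma:calRzero}, but the ordering $\trmfkC_\lambda\trmfkH$ versus $\trmfkH\trmfkC_\lambda$ must be matched against the form of (DP1). I will handle this by computing $\langle b\trK_\mu,\,b'\trL_\nu\rangle$ directly from the displayed factorization.
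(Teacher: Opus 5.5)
Your proposal is correct and is the same Drinfeld quantum-double argument the paper appeals to; the paper states the theorem as a consequence of ``the Drinfeld argument'' without further detail, and your sketch fills in that route. You identify $\hattrmfkC\trmfkH$ as the canonical element of the full pair, get (3) from (DP1), (1) from the antipode axiom and (DP2), (2) from Lemma~\ref{lemma:DrinfeldQDCone} combined with (3), and Yang--Baxter from (2) and (3). Two minor points: $(\varepsilon\otimes\rmid)(\trmfkH)=1$ uses the half $\langle X,1\rangle=\varepsilon(X)$ of (DP2), and your derivation yields $\trmfkR_{12}\trmfkR_{13}\trmfkR_{23}=\trmfkR_{23}\trmfkR_{13}\trmfkR_{12}$, which is the intended reading of the misprinted \eqref{eqn:maintwoEqFive}.
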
 
\subsection{Structure of $\trmfkUo$ for some $\zeta$ and $\omega$}
Let $\udUo$ be 
the associative $\bC$-algebra (with $1$)
defined by the generators
$\udK{i}^{\pm 1}$, $\udL{i}^{\pm 1}$
$(i\in I)$
and the relations
$\udK{i}\udK{i}^{-1}=\udK{i}^{-1}\udK{i}=1$, 
$\udL{i}\udL{i}^{-1}=\udL{i}^{-1}\udL{i}=1$, 
$XY=YX$ $(X,Y\in\{\udK{i}^{\pm 1}, 
\udL{i}^{\pm 1}|i\in I\})$.
Then the elements $\udK{0}^{r_1}\udK{1}^{r_2}\udL{0}^{r_3}\udL{1}^{r_4}$ $(r_t\in\bZ\,(t\in\fkJ_{1,4}))$ form 
the $\bC$-basis of $\udUo$.
Regard $\udUo$ as the 
Hopf algebra
with $\Delta(\udK{i})=\udK{i}\otimes\udK{i}$,
$\Delta(\udL{i})=\udL{i}\otimes\udL{i}$.
Let $\udUogeq=\oplus_{r_1,r_2\in\bZ}\bC\udK{0}^{r_1}\udK{1}^{r_2}$ and 
$\udUoleq=\oplus_{r_3,r_4\in\bZ}\bC\udL{0}^{r_3}\udL{1}^{r_4}$.
Then $\udUogeq$ and $\udUoleq$ 
are the Hopf subalgebras of $\udUo$.

Let $x\in2\bN+1$, and
let $y\in\bN$ such that ${\mathrm{G.C.D.}}(x,y)=1$.
Let $\zeta$, $\omega\in\bCt$
be such that
$\zeta^x=1$, $\zeta^{x^\prime}\ne 1$
$(x^\prime\in\fkJ_{1,x-1})$, 
$\omega^y=1$, $\omega^{y^\prime}\ne 1$ $(y^\prime\in\fkJ_{1,y-1})$.
Namely 
$\zeta$ and $\omega$ are
$x$-th and $y$-th
roots of unity, respectively.
Let $\zeta=\xi^2$.
Then $\zeta^2$ and $\xi$ are also  primitive $x$-th
roots of unity. 
Let $\udzeta_{00}=\udzeta_{11}=\zeta^2$
and $\udzeta_{01}=\zeta^{-2}\omega$, 
$\udzeta_{10}=\zeta^{-2}\omega^{-1}$.
Define 
the $\bC$-bilinear homomorphism
$\langle\,,\,\rangle:\udUogeq\times\udUoleq\to\bC$
by 
\begin{equation*}
\langle\udK{0}^{r_1}\udK{1}^{r_2},
\udL{0}^{r_3}\udL{1}^{r_4}\rangle
=\udzeta_{00}^{r_1r_3}\udzeta_{01}^{r_1r_4}
\udzeta_{10}^{r_2r_3}\udzeta_{11}^{r_2r_4}
\quad (r_t\in\bZ\,(t\in\fkJ_{1,4})).
\end{equation*}
Then $\langle\,,\,\rangle$
satisfies the same equations
as those of $(DP1)$-$(DP2)$
with $\udUo$ in place of $\trU$.
Let $u$, $v\in\bZ$ be such that $ux+vy=1$.
Let $\uudK{1}=\udK{1}$,
$\uudK{2}=\udK{1}^{vy}\udK{0}$
and $\uudL{1}=\udL{0}$,
$\uudL{2}=\udL{0}^{vy}\udL{1}$. Then we have
\begin{equation*}
\langle\uudK{1}^{r_1}\uudK{2}^{r_2},
\uudL{1}^{r_3}\uudL{2}^{r_4}\rangle
=(\zeta^{-2}\omega^{-1})^{r_1r_3}\omega^{r_2r_4}
\quad (r_t\in\bZ\,(t\in\fkJ_{1,4})).
\end{equation*} \label{eqn:UnBaKxy}
Let $\udI$ be the two-sided ideal of 
$\udUo$ (as a $\bC$-algebra)
generated by $\uudK{1}^{xy}-1$, 
$\uudK{2}^y-1$ and 
$\uudL{1}^{xy}-1$, 
$\uudL{2}^y-1$.
\begin{equation}
\mbox{Then $\udI$ is also generated by $\udK{1}^{xy}-1$, 
$\udK{1}^y\udK{0}^y-1$ and 
$\udL{1}^{xy}-1$, 
$\udL{1}^y\udL{0}^y-1$.}  
\end{equation}
Notice that $\udI$ is a Hopf ideal of $\udUo$.
Let $\dudUo=\udUo/\udI$ (as a quotient Hopf algebra).
Let $d:\udUo\to\dudUo$ be the canonical map. 
Let $\dudK{i}=d(\udK{i})$, $\dudL{i}=d(\udL{i})$
$(i\in I)$, 
$\duudK{t}=d(\uudK{t})$, $\duudL{t}=d(\uudL{t})$ 
$(t\in\fkJ_{1,2})$
and 
$\dudUogeq=d(\udUogeq)$, $\dudUoleq=d(\udUoleq)$. 
We also denote by $\langle\,\,,\,\rangle$
the non-degenerate bilinear map 
$\dudUogeq\times\dudUoleq\to\bC$ induced from
the above one.
Let $\dudmcRo\in\dudUogeq\otimes\dudUoleq$
be the universal $R$-matrix with respect to $\langle\,\,,\,\rangle$.
The we have
\begin{equation*}
\dudmcRo
={\frac 1 {xy^2}}\sum_{s,t=0}^{xy-1}
\sum_{g,h=0}^{y-1}(\zeta^{-2}\omega^{-1})^{-st}
\omega^{-gh}\duudK{1}^s\duudK{2}^g
\otimes \duudL{1}^t\duudL{2}^h.
\end{equation*}

\begin{lemma} \label{lemma:calRzero}
{\rm{(1)}}
Let $a$, $b\in\bZ$ and $x_i$, $y_i\in\bZ$ $(i\in I)$.
Define the $\bC$-algebra homomorphism
$f_1:\dudUogeq\to\bC$ 
{\rm{(}}resp. $f_2:\dudUoleq\to\bC${\rm{)}}
by $f_1(\dudK{i})=\zeta^{a_i}\omega^{x_i}$
{\rm{(}}resp. $f_2(\dudL{i})=\zeta^{b_i}\omega^{y_i}${\rm{)}}
$(i\in I)$,
where let $a_1=a$, $a_0=-a$
and $b_1=b$, $b_0=-b$. Then we have
\begin{equation*}
(f_1\otimes f_2)(\dudmcRo)
=\xi^{ab}\omega^{-x_1y_0+x_0y_1},
\end{equation*} where $\bC\otimes\bC$ is identified with $\bC$
by $\lambda\otimes \mu=\lambda\mu$ $(\lambda,\mu\in\bC)$.
\newline 
{\rm{(2)}} Assume that $\zeta$ and 
$\omega$ are the same as those of 
Theorem~{\rm{\ref{theorem:maintwo}}}.
Then there exists a unique
Hopf algebra isomorphism
$r:\dudUo\to\trmfkUo$
such that $r(\dudK{i})={\hat{g}}_{\zeta,\omega}(\trK_i)$, $r(\dudL{i})={\hat{g}}_{\zeta,\omega}(\trL_i)$
$(i\in I)$.
In particular,
$(r\otimes r)(\dudmcRo)=\trmfkH$.
\end{lemma}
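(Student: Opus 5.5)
Part (1) reduces to finding a group-like element that represents the character $f_2$ through the pairing, and part (2) is a dimension count based on the non-degeneracy of the two pairings.

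\textbf{Part (1).} The basic observation concerns canonical elements of a non-degenerate pairing $\langle\,,\,\rangle:V^+\times V^-\to\bC$. If $C=\sum_t e^+_t\otimes e^-_t$ is the canonical element and $g\in V^+$ satisfies $f_2(y)=\langle g,y\rangle$ for all $y\in V^-$, then
\[
(f_1\otimes f_2)(C)=f_1\Bigl(\sum_t\langle g,e^-_t\rangle e^+_t\Bigr)=f_1(g),
\]
since $\sum_t\langle g,e^-_t\rangle e^+_t=g$ by duality.

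So I will look for a group-like element $g=\dudK{0}^{r_1}\dudK{1}^{r_2}\in\dudUogeq$ with $\langle g,\dudL{i}\rangle=f_2(\dudL{i})$ for $i\in I$. Such a $g$ represents the algebra homomorphism $f_2$ because $\langle g,\cdot\rangle$ is multiplicative on $\dudUoleq$ (by $(DP1)$, since $g$ is group-like). From the defining formula of the pairing we get
\[
\langle g,\dudL{0}\rangle=\zeta^{2(r_1-r_2)}\omega^{-r_2},\qquad \langle g,\dudL{1}\rangle=\zeta^{2(r_2-r_1)}\omega^{r_1}.
\]
These must equal $\zeta^{-b}\omega^{y_0}$ and $\zeta^{b}\omega^{y_1}$ respectively.

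Since $\mathrm{G.C.D.}(x,y)=1$, $x$ is odd, $\zeta$ has order $x$ and $\omega$ has order $y$, the Chinese remainder theorem lets me choose $r_1,r_2\in\bZ$ separately modulo $x$ and modulo $y$. I will impose $r_1\equiv y_1$ and $r_2\equiv -y_0\pmod y$. For the $\zeta$-part, write $\zeta^{2(r_2-r_1)}=\xi^{4(r_2-r_1)}$ and impose $4(r_2-r_1)\equiv 2b\pmod x$, which is solvable because $2$ is invertible mod $x$. I then evaluate
\[
f_1(g)=\zeta^{-ar_1+ar_2}\,\omega^{x_0r_1+x_1r_2}.
\]
The congruences make this $\xi^{ab}\omega^{x_0y_1-x_1y_0}$: indeed $\zeta^{a(r_2-r_1)}=\xi^{2a(r_2-r_1)}=\xi^{ab}$, and the $\omega$-exponent reduces as claimed.

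The main care point is the bookkeeping of exponents modulo $x$ versus modulo $y$. In particular, the $\omega$-factors of $\zeta$-type generators must not interfere, and this is exactly where $\mathrm{G.C.D.}(x,y)=1$ is used.

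\textbf{Part (2).} I will first define $\tilde r:\udUo\to\trmfkUo$ on generators by $\udK{i}\mapsto{\hat{g}}_{\zeta,\omega}(\trK_i)$ and $\udL{i}\mapsto{\hat{g}}_{\zeta,\omega}(\trL_i)$. This is a surjective Hopf algebra map, because $\trmfkUo$ is generated by these images and the coproducts agree.

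To see that $\tilde r$ kills $\udI$, take a generator such as $\udK{1}^{xy}-1$. Its image ${\hat{g}}_{\zeta,\omega}(\trK_1^{xy})-1$ pairs to zero with every $\trL_\mu$, because $\chi(xy\al_1,\mu)$ specializes to $1$. Hence it lies in $\trmcJogeq$ and becomes zero in $\trmfkUo=\trmcUo/\trmcJo$. The same argument handles $\udK{1}^y\udK{0}^y-1$ and the two $L$-generators. So $\tilde r$ descends to a surjection $r:\dudUo\to\trmfkUo$.

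For injectivity, note that $r$ intertwines the pairing on $\dudUogeq\times\dudUoleq$ with the one on $\trmfkUogeq\times\trmfkUoleq$, since both pairings take the value $\chi$ on group-likes. If $r(u)=0$ with $u\in\dudUogeq$, then $\langle u,\dudUoleq\rangle=0$, and non-degeneracy of the pairing on $\dudUo$ gives $u=0$; the same holds on the $L$-side. Because $\dudUo=\dudUogeq\otimes\dudUoleq$ and similarly for $\trmfkUo$, this gives injectivity.

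Finally, a map that intertwines two non-degenerate pairings sends the canonical element of one to the canonical element of the other. Therefore $(r\otimes r)(\dudmcRo)=\trmfkH$.
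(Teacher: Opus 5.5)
Your proof is correct. The paper states this lemma without proof. Its setup suggests that (1) is meant to be checked by direct evaluation. The paper rewrites the pairing in generators where it is diagonal, with values the primitive $xy$-th root $\zeta^{-2}\omega^{-1}$ and the primitive $y$-th root $\omega$, and writes $\dudmcRo$ as an explicit double sum. Applying $f_1\otimes f_2$ splits that sum into a sum over $\bZ/xy\bZ$ and a sum over $\bZ/y\bZ$. Orthogonality of characters then selects, in the first sum, the unique $s_0$ modulo $xy$ with $2s_0\equiv b \pmod{x}$ and $s_0\equiv -y_0 \pmod{y}$, and in the second the index $y_1$, giving $\xi^{ab}\omega^{-x_1y_0}\cdot\omega^{x_0y_1}$.

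Your route is the coordinate-free form of this. Instead of summing over roots of unity, you use that evaluating a canonical element against a character of $\dudUoleq$ returns $f_1$ of the group-like element representing that character, and you find that element by the Chinese remainder theorem. With the admissible choice $r_1=y_1$, $r_2=s_0+vyy_1$, your $g=\dudK{0}^{y_1}\dudK{1}^{s_0+vyy_1}$ is exactly the monomial singled out by the orthogonality sums. What your version buys is independence from the auxiliary generators and from the explicit formula for $\dudmcRo$: you only need that it is the canonical element of a non-degenerate pairing.

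For (2) there is likewise no argument in the paper to compare with. Your argument is the natural one: the generators of $\udI$ map into the radicals $\trmcJogeq$ and $\trmcJoleq$; injectivity on each tensor factor follows from non-degeneracy of the transported pairing; and dual bases go to dual bases. It is complete given the (Ba2)-type factorization of $\trmfkUo$ that the paper asserts. Despite your opening sentence, this is not a dimension count. A count $\dim\trmfkUogeq=xy^2=\dim\dudUogeq$, coming from the index in $\bZ\Pi$ of the sublattice of $\lambda$ with $\theta_{\zeta,\omega}(\chi(\lambda,\mu))=1$ for all $\mu$, would be an alternative to your injectivity step.

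Two small omissions:
\begin{enumerate}
\item You should check that $f_1$ and $f_2$ are well defined on the quotients, e.g.\ $f_1(\dudK{1})^{xy}=1$ and $(f_1(\dudK{0})f_1(\dudK{1}))^{y}=\omega^{(x_0+x_1)y}=1$, and similarly for $f_2$.
\item Uniqueness of $r$ should be noted; it is immediate because $r$ is prescribed on generators.
\end{enumerate}
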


\section{Vector representation}
\label{section:GenVrep}

From now on until \eqref{eqn:DefcalRz}, we assume $q(|q|-1)\ne 0$.

Let $\trkappa\in\bN$.
Let $\omega\in\bCt$ be 
such that $\omega^\trkappa=1$
and $\omega^y\ne 1$ $(y\in\fkJ_{1,\trkappa-1})$.
In this section, we assume $a=q^{-2}\omega$.
Let $u$, $v\in\bCt$.
Then we have the representation (or the $\bC$-algebra homomorphism)
$\rho:\trU\to{\mathrm{M}}_{2\trkappa}(\bC)$
defined by the following equations
\eqref{eqn:deftrrhoKOne}-\eqref{eqn:deftrrhoFZero}.
For $k\in\bN$ and $i,j\in\fkJ_{1,k}$,
let ${\dot{E}}^{(k)}_{ij}\in{\mathrm{M}}_k(\bC)$ be the matrix unit such that
$(i,j)$-position is $1$, i.e., ${\dot{E}}^{(k)}_{ij}=(\delta_{si}\delta_{tj})_{1\leq s, t\leq k}$. 
Let ${\acute{E}}_{ij}={\dot{E}}^{(2\trkappa)}_{ij}$ $(i,j\in\fkJ_{1,2\kappa})$.
Let ${\acute{E}}=\sum_{i=1}^{2\trkappa}{\acute{E}}_{ii}$
(the identity matrix).

Let ${\grave{E}}_{s,t}=E^{(\trkappa)}_{s,t}$
$(\in{\mathrm{M}}_\trkappa(\bC))$.
Let ${\grave{C}}=(\sum_{t=1}^{\trkappa-1}{\grave{E}}_{t,t+1})
+{\grave{E}}_{\trkappa,1}$.
That is,  ${\grave{C}}$ is 
the cyclic permutation matrix of size $\trkappa$.
Let ${\grave{P}}=\sum_{s,t=1}^\trkappa
\tromega^{(s-1)(t-1)}{\grave{E}}_{s,t}$.
Then ${\grave{P}}^{-1}={\frac 1 \trkappa}\sum_{s,t=1}^\trkappa
\tromega^{-(s-1)(t-1)}{\grave{E}}_{s,t}$.
Let ${\grave{D}}=\sum_{s=1}^\trkappa
\tromega^{s-1}{\grave{E}}_{s,s}$.
Then ${\grave{P}}^{-1}{\grave{C}}{\grave{P}}={\grave{D}}$.
We also note $\tromega{\grave{D}}{\grave{C}}={\grave{C}}{\grave{D}}$.
Let ${\grave{E}}=\sum_{s=1}^{\trkappa}{\grave{E}}_{ss}$.

We also identify ${\mathrm{M}}_{2\trkappa}(\bC)$
with ${\mathrm{M}}_2(\bC)\otimes
{\mathrm{M}}_{\trkappa}(\bC)$
in a natural way. 

\begin{equation} \label{eqn:deftrrhoKOne}
\rho(\trK_1)=u \sum_{t=1}^\trkappa\tromega^{t-1}(q^2 {\acute{E}}_{2t-1,2t-1}+ {\acute{E}}_{2t,2t})
=u\cdot
\left(\begin{array}{cc}
q^2 & 0 \\ 0 & 1
\end{array}\right)
\otimes {\grave{D}}.
\end{equation}
\begin{equation} \label{eqn:deftrrhoLOne}
\rho(\trL_1)=u \sum_{t=1}^\trkappa\tromega^{t-1}({\acute{E}}_{2t-1,2t-1}+ q^2 {\acute{E}}_{2t,2t})
=u\cdot
\left(\begin{array}{cc}
1 & 0 \\ 0 & q^2
\end{array}\right)
\otimes {\grave{D}}.
\end{equation}
\begin{equation} \label{eqn:deftrrhoEOne}
\rho(\trE_1)=-u(q^2-1)\sum_{t=1}^\trkappa\tromega^{t-1} {\acute{E}}_{2t-1,2t}
=-u(q^2-1)\cdot
\left(\begin{array}{cc}
0 & 1 \\ 0 & 0
\end{array}\right)
\otimes {\grave{D}}.
\end{equation}

\begin{equation} \label{eqn:deftrrhoFOne}
\rho(\trF_1)=\sum_{t=1}^\trkappa{\acute{E}}_{2t,2t-1}
=
\left(\begin{array}{cc}
0 & 0 \\ 1 & 0
\end{array}\right)
\otimes {\grave{E}}.
\end{equation}
\begin{equation} \label{eqn:deftrrhoKZero}
\rho(\trK_0)=v\left(\tromega {\acute{E}}_{1,1}+
(\sum_{t=1}^{\trkappa-1}\tromega^{-(t-1)}(q^2 {\acute{E}}_{2t,2t}+ {\acute{E}}_{2t+1,2t+1}))
+\tromega q^2 {\acute{E}}_{2\trkappa,2\trkappa}
\right)
=v\cdot
\left(\begin{array}{cc}
\tromega & 0 \\ 0 & q^2
\end{array}\right)
\otimes {\grave{D}}^{-1}.
\end{equation}
\begin{equation} \label{eqn:deftrrhoLZero}
\rho(\trL_0)=v\left(q^2\tromega {\acute{E}}_{1,1}+
(\sum_{t=1}^{\trkappa-1}\tromega^{-(t-1)}({\acute{E}}_{2t,2t}+ q^2  {\acute{E}}_{2t+1,2t+1}))
+\tromega {\acute{E}}_{2\trkappa,2\trkappa}
\right)
=v\cdot
\left(\begin{array}{cc}
\tromega q^2 & 0 \\ 0 & 1
\end{array}\right)
\otimes {\grave{D}}^{-1}.
\end{equation}
\begin{equation} \label{eqn:deftrrhoEZero}
\rho(\trE_0)=-v(q^2-1)
\left(
(\sum_{t=1}^{\trkappa-1}\tromega^{-(t-1)} {\acute{E}}_{2t,2t+1})
+\tromega {\acute{E}}_{2\trkappa,1}\right)
=-v(q^2-1)\cdot
\left(\begin{array}{cc}
0 & 0 \\ 1 & 0
\end{array}\right)
\otimes {\grave{D}}^{-1}{\grave{C}}.
\end{equation}
\begin{equation} \label{eqn:deftrrhoFZero}
\rho(\trF_0)=
(\sum_{t=1}^{\trkappa-1}{\acute{E}}_{2t+1,2t})
+{\acute{E}}_{1,2\trkappa}
=
\left(\begin{array}{cc}
0 & 1 \\ 0 & 0
\end{array}\right)
\otimes {\grave{C}}^{-1}.
\end{equation}

\noindent
From now one until end of 
Section~\ref{section:GenVrep},
for $y\in\bZ$, let ${\bar y}\in\fkJ_{1,2\trkappa}$  
be such that $y-{\bar y}\in\trkappa\bZ$.
We have the following equations
\eqref{eqn:rhEondelone}-\eqref{eqn:rhtilFondel}.

\begin{equation} \label{eqn:rhEondelone}
\begin{array}{lcl}
\rho(\trE_{n\delta+\al_1})
& = & (-1)^{n+1}\tromega^{2n} 
\cdot {\frac {u^{n+1}v^n(q^2-1)^{2n+1}} {q^n}}
\cdot \sum_{s=1}^{\trkappa}
\tromega^{s-1}
{\acute{E}}_{2s-1,{\overline{2(n+s)}}}  \\
& = & (-1)^{n+1}\tromega^{2n} 
\cdot {\frac {u^{n+1}v^n(q^2-1)^{2n+1}} {q^n}}
\cdot \left(\begin{array}{cc}
0 & 1 \\ 0 & 0
\end{array}\right)
\otimes {\grave{D}}{\grave{C}}^n.
\end{array}
\end{equation}

\begin{equation} \label{eqn:rhEondelzero}
\begin{array}{lcl}
\rho(\trE_{n\delta+\al_0})
& = & (-1)^{n+1}\tromega^n
\cdot {\frac {u^nv^{n+1}(q^2-1)^{2n+1}} {q^n}}
\cdot \sum_{s=1}^{\trkappa}
\tromega^{1-s}
{\acute{E}}_{2s,{\overline{2(n+s)+1}}} \\
& = & (-1)^{n+1}\tromega^n
\cdot {\frac {u^nv^{n+1}(q^2-1)^{2n+1}} {q^n}}
\cdot \left(\begin{array}{cc}
0 & 0 \\ 1 & 0
\end{array}\right)
\otimes {\grave{D}}^{-1}{\grave{C}}^{n+1}.
\end{array}
\end{equation}

\begin{equation} \label{eqn:rhEondel}
\begin{array}{lcl}
\rho(\trE_{n\delta})
& =& (-1)^n\tromega^{2n-1} 
\cdot {\frac {u^nv^n(q^2-1)^{2n}} {q^{n+1}}}
\cdot \sum_{s=1}^{\trkappa}({\acute{E}}_{2s-1,{\overline{2(n+s)-3}}}-q^2{\acute{E}}_{2s,{\overline{2(n+s-1)}}}) \\
& =& (-1)^n\tromega^{2n-1} 
\cdot {\frac {u^nv^n(q^2-1)^{2n}} {q^{n+1}}}
\cdot \left(\begin{array}{cc}
1 & 0 \\ 0 & -q^2
\end{array}\right)
\otimes {\grave{C}}^n.
\end{array}
\end{equation}

\begin{equation} \label{eqn:rhtilEondel}
\begin{array}{lcl}
\rho(\tE_{n\delta})
& =& (-1)^n\tromega^n 
\cdot {\frac {u^nv^n(q^2-1)^{2n-1}(q^{2n}-1)} {n q^{n-1}}}\\
& & \cdot \sum_{s=1}^\trkappa({\frac 1 {q^{2n}}}{\acute{E}}_{2s-1,{\overline{2(n+s)-3}}}-
{\acute{E}}_{2s,{\overline{2(n+s-1)}}}) \\
& =& (-1)^n\tromega^n 
\cdot {\frac {u^nv^n(q^2-1)^{2n-1}(q^{2n}-1)} {n q^{n-1}}}
\left(\begin{array}{cc}
{\frac 1 {q^{2n}}} & 0 \\
0 & -1
\end{array}\right)\otimes{\grave{C}}^n.
\end{array} 
\end{equation}

\begin{equation} \label{eqn:rhFondelone}
\begin{array}{lcl}
\rho(\trF_{n\delta+\al_1})
& = & (-1)^n\tromega^{-n} 
\cdot {\frac 1 {q^n}}
\cdot \sum_{s=1}^{\trkappa}
{\acute{E}}_{{\overline{2(n+s)},2s-1}} \\
& = & (-1)^n\tromega^{-n} 
\cdot {\frac 1 {q^n}}
\cdot \left(\begin{array}{cc}
0 & 0 \\ 1 & 0
\end{array}\right)
\otimes {\grave{C}}^{-n}.
\end{array}
\end{equation}

\begin{equation} \label{eqn:rhFondelzero}
\begin{array}{lcl}
\rho(\trF_{n\delta+\al_0})
& = & (-1)^n\tromega^{-n} 
\cdot {\frac 1 {q^n}}
\cdot \sum_{s=1}^{\trkappa}
{\acute{E}}_{{\overline{2(n+s)+1},2s}} \\
& = & (-1)^n\tromega^{-n} 
\cdot {\frac 1 {q^n}}
\cdot \left(\begin{array}{cc}
0 & 1 \\ 0 & 0
\end{array}\right)
\otimes {\grave{C}}^{-(n+1)}.
\end{array}
\end{equation}

\begin{equation} \label{eqn:rhFondel}
\begin{array}{lcl}
\rho(\trF_{n\delta})
& =& (-1)^{n-1}\tromega^{-n+1}
\cdot {\frac {1} {q^{n+1}}}
\cdot \sum_{s=1}^{\trkappa}(q^2{\acute{E}}_{{\overline{2(n+s)-1}},2s-1}-\tromega^{-n} {\acute{E}}_{{\overline{2(n+s)}},2s}) \\
& =& (-1)^{n-1}\tromega^{-n+1}
\cdot {\frac {1} {q^{n+1}}}
\cdot \left(\begin{array}{cc}
q^2 & 0 \\ 0 & -\tromega^{-n}
\end{array}\right)
\otimes {\grave{C}}^{-n}.
\end{array}
\end{equation}

\begin{equation} \label{eqn:rhtilFondel}
\begin{array}{lcl}
\rho(\tF_{n\delta})
& =& (-1)^{n-1}
\cdot {\frac {q^{2n}-1} {n q^{n-1}(q^2-1)}}\\
& & \cdot \sum_{s=1}^\trkappa({\acute{E}}_{{\overline{2(n+s)-1}},2s-1}-
{\frac {\tromega^{-n} } {q^{2n}}}
{\acute{E}}_{{\overline{2(n+s)}},2s}) \\
& =& (-1)^{n-1}
\cdot {\frac {q^{2n}-1} {n q^{n-1}(q^2-1)}}
\left(\begin{array}{cc}
1 & 0 \\
0 & -{\frac {\tromega^{-n} } {q^{2n}}}
\end{array}\right)\otimes{\grave{C}}^{-n}.
\end{array}
\end{equation}

For $y\in\bCt$,
define the 
$\bC$-algebra homomorphism
$\rho_y:\trU\to{\mathrm{M}}_{2\trkappa}(\bC)$
by $\rho_y(X)=\rho(X)$
for $X\in\{\trK_i^{\pm1}, \trL_i^{\pm1}\,|\,i\in I\}
\cup\{\trE_1,\trF_1\}$
and $\rho_y(\trE_0)=y\cdot\rho(\trE_0)$,
$\rho_y(\trF_0)=y^{-1}\cdot\rho(\trF_0)$

Let $\lambda$, $\mu\in\bCt$ Let $z={\frac \lambda \mu}$
and assume $|z|<<1$.
For $i\in I$, let
\begin{equation}
\begin{array}{l}
\trRuvdotdelali  \\
\quad =  {\acute{E}}\otimes{\acute{E}}+\sum_{n=0}^\infty
\sum_{k=1}^\infty
\left(
{\frac 1 {{(k)}_{q^2}!
{\langle\trE_{n\delta+\al_i},\trF_{n\delta+\al_i}\rangle^k}}}
(\rho_\lambda(\trE_{n\delta+\al_i})^k\otimes
\rho_\mu(\trF_{n\delta+\al_i})^k)
\right) \\
\quad =  {\acute{E}}
\otimes{\acute{E}} +\sum_{n=0}^\infty
\left(
{\frac 1 {
{\langle\trE_{n\delta+\al_i},\trF_{n\delta+\al_i}\rangle}}}
(\rho_\lambda(\trE_{n\delta+\al_i})\otimes
\rho_\mu(\trF_{n\delta+\al_i}))
\right).
\end{array}
\end{equation} Then we have
\begin{equation}
\begin{array}{l}
\trRuvdotdelalzero \\
\quad ={\acute{E}}
\otimes{\acute{E}} -
{\frac {z(q^2-1)v} {1-(zuvq^2)^\trkappa}}
\left(\sum\limits_{n=0}^{\trkappa-1}(zuvq^2)^n
\left(\sum\limits_{s,t=1}^\trkappa
(\tromega^{1-s}
{\acute{E}}_{2s,{\overline{2(n+s)+1}}}
\otimes {\acute{E}}_{{\overline{2(n+t)+1},2t}})
\right)
\right) \\
\quad ={\acute{E}}
\otimes{\acute{E}} -
{\frac {z(q^2-1)v} {1-(zuvq^2)^\trkappa}}
\left(\sum\limits_{n=0}^{\trkappa-1}(zuvq^2)^n
\left(\left(\begin{array}{cc}
0 & 0 \\
1 & 0
\end{array}\right)\otimes{\grave{D}}^{-1}{\grave{C}}^{n+1}
\right)\otimes
\left(\left(\begin{array}{cc}
0 & 1 \\
0 & 0
\end{array}\right)\otimes{\grave{C}}^{-(n+1)}\right)
\right).
\end{array}
\end{equation}
and
\begin{equation}
\begin{array}{l}
\trRuvdotdelalone \\
\quad ={\acute{E}}
\otimes{\acute{E}} -
{\frac {(q^2-1)u} {1-(zuvq^2)^\trkappa}}
\left(\sum\limits_{n=0}^{\trkappa-1}(zuv\tromega q^2)^n
\left(\sum\limits_{s,t=1}^\trkappa
(\tromega^{s-1}
{\acute{E}}_{2s-1,{\overline{2(n+s)}}}
\otimes {\acute{E}}_{{\overline{2(n+t)},2t-1}})
\right)
\right) \\
\quad ={\acute{E}}
\otimes{\acute{E}} -
{\frac {(q^2-1)u} {1-(zuvq^2)^\trkappa}}
\left(\sum\limits_{n=0}^{\trkappa-1}(zuv\tromega q^2)^n
\left(\left(\begin{array}{cc}
0 & 1 \\
0 & 0
\end{array}\right)\otimes{\grave{D}}{\grave{C}}^n
\right)\otimes
\left(\left(\begin{array}{cc}
0 & 0 \\
1 & 0
\end{array}\right)\otimes{\grave{C}}^{-n}\right)
\right).
\end{array}
\end{equation}

Let
\begin{equation}
\begin{array}{l}
\trRuvdotdel  \\
\quad =  \prod\limits_{n=0}^\infty\left({\acute{E}}\otimes{\acute{E}}+
\sum\limits_{k=1}^\infty
\left(
{\frac 1 {k!
{\langle\tE_{n\delta},\tF_{n\delta}\rangle^k}}}
(\rho_\lambda(\tE_{n\delta})^k\otimes
\rho_\mu(\tF_{n\delta})^k)
\right)\right) \\
\quad =
\exp\left(
-\sum\limits_{n=1}^\infty{\frac {z^n\tromega^nu^nv^nq^{4n}} n}
\cdot{\frac {q^n-q^{-n}} {q^n+q^{-n}}}
\left(\left(\begin{array}{cc}
{\frac 1 {q^{2n}}} & 0 \\
0 & -1
\end{array}\right)\otimes{\grave{C}}^n
\right)\otimes
\left(\left(\begin{array}{cc}
1 & 0 \\
0 & -{\frac {\tromega^{-n} } {q^{2n}}}
\end{array}\right)\otimes{\grave{C}}^{-n}\right)
\right).
\end{array} 
\end{equation}

Let 
\begin{equation*}
{\mathcal{A}}(z)=\exp\left(-\sum_{k=1}^\infty
{\frac {z^k(q^k-q^{-k})} {k(q^k+q^{-k})}}\right), 
\end{equation*}
where ${\mathcal{A}}(z)$ 
can be defined since $|z|<1$
and $|q|\ne 1$.
Since $|z|<<1$, we have
\begin{equation*}
\begin{array}{l}
{\mathcal{A}}(z){\mathcal{A}}(q^2z)=
\exp\left(-\sum_{k=1}^\infty
{\frac {z^k(q^{3k}-q^{-k})} {k(q^k+q^{-k})}}\right) \\
\quad =\exp \left(-\sum_{k=1}^\infty 
{\frac {z^k(q^{2k}-1)} {k}}\right) \\
\quad =
{\frac {1-q^2z} {1-z}},
\end{array}
\end{equation*}
and
\begin{equation*}
{\frac {{\mathcal{A}}(q^4z)} {{\mathcal{A}}(z)}}
={\frac {{\mathcal{A}}(q^2z){\mathcal{A}}(q^4z)} {{\mathcal{A}}(z){\mathcal{A}}(q^2z)}}
={\frac {(1-z)(1-q^4z)} {(1-q^2z)^2}}.
\end{equation*}

Let ${\acute{P}}=\left(\begin{array}{cc}
1 & 0 \\
0 & 1
\end{array}\right)\otimes{\grave{P}}$.
Then we have
\begin{equation}
\begin{array}{l}
\trRuvdotdel  \\
\quad = ({\acute{P}}\otimes{\acute{P}}) \\
\quad\quad \cdot
\exp\left(
-\sum\limits_{n=1}^\infty{\frac {z^n\tromega^nu^nv^nq^{4n}} n}
\cdot{\frac {q^n-q^{-n}} {q^n+q^{-n}}}
\left(\left(\begin{array}{cc}
{\frac 1 {q^{2n}}} & 0 \\
0 & -1
\end{array}\right)\otimes{\grave{D}}^n
\right)\otimes
\left(\left(\begin{array}{cc}
1 & 0 \\
0 & -{\frac {\tromega^{-n} } {q^{2n}}}
\end{array}\right)\otimes{\grave{D}}^{-n}\right)
\right)
\\
\quad\quad\quad \cdot ({\acute{P}}^{-1}\otimes{\acute{P}}^{-1}) \\
\quad = ({\acute{P}}\otimes{\acute{P}}) \\
\quad\quad \cdot\Bigl(
\sum\limits_{s,t=1}^\trkappa
\Bigl\{
{\mathcal{A}}(z\tromega uv q^2\tromega^{s-t})
\left(\left(\left(\begin{array}{cc}
1 & 0 \\
0 & 0
\end{array}\right)\otimes{\grave{E}}_{ss}
\right)\otimes
\left(\left(\begin{array}{cc}
1 & 0 \\
0 & 0
\end{array}\right)\otimes{\grave{E}}_{tt}\right)\right) \\
\quad\quad 
+{\frac 1 {{\mathcal{A}}(zuv \tromega^{s-t})}}
\left(\left(\left(\begin{array}{cc}
1 & 0 \\
0 & 0
\end{array}\right)\otimes{\grave{E}}_{ss}
\right)\otimes
\left(\left(\begin{array}{cc}
0 & 0 \\
0 & 1
\end{array}\right)\otimes{\grave{E}}_{tt}\right)\right) \\
\quad\quad 
+{\frac 1 {{\mathcal{A}}(z\tromega uv q^4 \tromega^{s-t})}}
\left(\left(\left(\begin{array}{cc}
0 & 0 \\
0 & 1
\end{array}\right)\otimes{\grave{E}}_{ss}
\right)\otimes
\left(\left(\begin{array}{cc}
1 & 0 \\
0 & 0
\end{array}\right)\otimes{\grave{E}}_{tt}\right)\right) \\
\quad\quad +{\mathcal{A}}(z uv q^2 \tromega^{s-t})
\left(\left(\left(\begin{array}{cc}
0 & 0 \\
0 & 1
\end{array}\right)\otimes{\grave{E}}_{ss}
\right)\otimes
\left(\left(\begin{array}{cc}
0 & 0 \\
0 & 1
\end{array}\right)\otimes{\grave{E}}_{tt}\right)\right)
\Bigr\}\Big)
\\
\quad\quad\quad \cdot ({\acute{P}}^{-1}\otimes{\acute{P}}^{-1})
\end{array} 
\end{equation}

\begin{equation*}
\begin{array}{l}
\quad = ({\acute{P}}\otimes{\acute{P}}) \\
\quad\quad \cdot\Bigl(
\sum\limits_{s,t=1}^\trkappa
\Bigl\{
{\mathcal{A}}(z\tromega uv q^2\tromega^{s-t})
\left(\left(\left(\begin{array}{cc}
1 & 0 \\
0 & 0
\end{array}\right)\otimes{\grave{E}}_{ss}
\right)\otimes
\left(\left(\begin{array}{cc}
1 & 0 \\
0 & 0
\end{array}\right)\otimes{\grave{E}}_{tt}\right)\right) \\
\quad\quad 
+{\mathcal{A}}(zuv q^2 \tromega^{s-t})
{\frac {1-zuv \tromega^{s-t}} {1-zuv q^2 \tromega^{s-t}}}
\left(\left(\left(\begin{array}{cc}
1 & 0 \\
0 & 0
\end{array}\right)\otimes{\grave{E}}_{ss}
\right)\otimes
\left(\left(\begin{array}{cc}
0 & 0 \\
0 & 1
\end{array}\right)\otimes{\grave{E}}_{tt}\right)\right) \\
\quad\quad 
+{\mathcal{A}}(z\tromega uv q^2 \tromega^{s-t})
{\frac {1-z\tromega uv q^2 \tromega^{s-t}} {1-z\tromega uv q^4 \tromega^{s-t}}}
\left(\left(\left(\begin{array}{cc}
0 & 0 \\
0 & 1
\end{array}\right)\otimes{\grave{E}}_{ss}
\right)\otimes
\left(\left(\begin{array}{cc}
1 & 0 \\
0 & 0
\end{array}\right)\otimes{\grave{E}}_{tt}\right)\right) \\
\quad\quad +{\mathcal{A}}(z uv q^2\tromega^{s-t})
\left(\left(\left(\begin{array}{cc}
0 & 0 \\
0 & 1
\end{array}\right)\otimes{\grave{E}}_{ss}
\right)\otimes
\left(\left(\begin{array}{cc}
0 & 0 \\
0 & 1
\end{array}\right)\otimes{\grave{E}}_{tt}\right)\right)
\Bigr\}\Bigr)
\\
\quad\quad\quad \cdot ({\acute{P}}^{-1}\otimes{\acute{P}}^{-1})
\end{array} 
\end{equation*}

\begin{equation*}
\begin{array}{l}
\quad = {\frac 1 {\trkappa^2}}\cdot
\sum\limits_{x_1,y_1,x_2,y_2=1}^\trkappa 
\Bigl\{\sum\limits_{s,t=1}^\trkappa 
\tromega^{(x_1-y_1)(s-1)+(x_2-y_2)(t-1)} \\
\quad\quad \cdot
\Bigl\{
{\mathcal{A}}(z\tromega uv q^2\tromega^{s-t})
\left(\left(\left(\begin{array}{cc}
1 & 0 \\
0 & 0
\end{array}\right)\otimes{\grave{E}}_{x_1,y_1}
\right)\otimes
\left(\left(\begin{array}{cc}
1 & 0 \\
0 & 0
\end{array}\right)\otimes{\grave{E}}_{x_2,y_2}\right)\right) \\
\quad\quad 
+{\mathcal{A}}(zuv q^2 \tromega^{s-t})
{\frac {1-zuv \tromega^{s-t}} {1-zuv q^2 \tromega^{s-t}}}
\left(\left(\left(\begin{array}{cc}
1 & 0 \\
0 & 0
\end{array}\right)\otimes{\grave{E}}_{x_1,y_1}
\right)\otimes
\left(\left(\begin{array}{cc}
0 & 0 \\
0 & 1
\end{array}\right)\otimes{\grave{E}}_{x_2,y_2}\right)\right) \\
\quad\quad 
+{\mathcal{A}}(z\tromega uv q^2 \tromega^{s-t})
{\frac {1-z\tromega uv q^2 \tromega^{s-t}} {1-z\tromega uv q^4 \tromega^{s-t}}}
\left(\left(\left(\begin{array}{cc}
0 & 0 \\
0 & 1
\end{array}\right)\otimes{\grave{E}}_{x_1,y_1}
\right)\otimes
\left(\left(\begin{array}{cc}
1 & 0 \\
0 & 0
\end{array}\right)\otimes{\grave{E}}_{x_2,y_2}\right)\right) \\
\quad\quad +{\mathcal{A}}(z uv q^2\tromega^{s-t})
\left(\left(\left(\begin{array}{cc}
0 & 0 \\
0 & 1
\end{array}\right)\otimes{\grave{E}}_{x_1,y_1}
\right)\otimes
\left(\left(\begin{array}{cc}
0 & 0 \\
0 & 1
\end{array}\right)\otimes{\grave{E}}_{x_2,y_2}\right)\right)
\Bigr\}\Bigr\}.
\end{array} 
\end{equation*}

Assume $u=v=q^{-1}$.
Let 
\begin{equation}
\begin{array}{lcl}
{\mathcal{R}}_{{\mathfrak{h}}} & = & 
\sum\limits_{s,t=1}^\trkappa
\Bigl\{
q^{-{\frac 1 2}}\tromega^{-(s-1)(2-t)+(2-s)(t-1)}
\left(\left(\left(\begin{array}{cc}
1 & 0 \\
0 & 0
\end{array}\right)\otimes{\grave{E}}_{ss}
\right)\otimes
\left(\left(\begin{array}{cc}
1 & 0 \\
0 & 0
\end{array}\right)\otimes{\grave{E}}_{tt}\right)\right) \\
&  & 
+q^{{\frac 1 2}}\tromega^{-(s-1)(1-t)+(2-s)(t-1)}
\left(\left(\left(\begin{array}{cc}
1 & 0 \\
0 & 0
\end{array}\right)\otimes{\grave{E}}_{ss}
\right)\otimes
\left(\left(\begin{array}{cc}
0 & 0 \\
0 & 1
\end{array}\right)\otimes{\grave{E}}_{tt}\right)\right) \\
&  &  
+q^{{\frac 1 2}}\tromega^{-(s-1)(2-t)+(1-s)(t-1)}
\left(\left(\left(\begin{array}{cc}
0 & 0 \\
0 & 1
\end{array}\right)\otimes{\grave{E}}_{ss}
\right)\otimes
\left(\left(\begin{array}{cc}
1 & 0 \\
0 & 0
\end{array}\right)\otimes{\grave{E}}_{tt}\right)\right) \\
&  &  
+q^{-{\frac 1 2}}\tromega^{-(s-1)(1-t)+(1-s)(t-1)}
\left(\left(\left(\begin{array}{cc}
0 & 0 \\
0 & 1
\end{array}\right)\otimes{\grave{E}}_{ss}
\right)\otimes
\left(\left(\begin{array}{cc}
0 & 0 \\
0 & 1
\end{array}\right)\otimes{\grave{E}}_{tt}\right)\right)
\Bigr\}
\\
 & = & 
\sum\limits_{s,t=1}^\trkappa
\Bigl\{
q^{-{\frac 1 2}}\tromega^{-s+t}
\left(\left(\left(\begin{array}{cc}
1 & 0 \\
0 & 0
\end{array}\right)\otimes{\grave{E}}_{ss}
\right)\otimes
\left(\left(\begin{array}{cc}
1 & 0 \\
0 & 0
\end{array}\right)\otimes{\grave{E}}_{tt}\right)\right) \\
&  & 
+q^{{\frac 1 2}}\tromega^{t-1}
\left(\left(\left(\begin{array}{cc}
1 & 0 \\
0 & 0
\end{array}\right)\otimes{\grave{E}}_{ss}
\right)\otimes
\left(\left(\begin{array}{cc}
0 & 0 \\
0 & 1
\end{array}\right)\otimes{\grave{E}}_{tt}\right)\right) \\
&  &  
+q^{{\frac 1 2}}\tromega^{-s+1}
\left(\left(\left(\begin{array}{cc}
0 & 0 \\
0 & 1
\end{array}\right)\otimes{\grave{E}}_{ss}
\right)\otimes
\left(\left(\begin{array}{cc}
1 & 0 \\
0 & 0
\end{array}\right)\otimes{\grave{E}}_{tt}\right)\right) \\
&  &  
+q^{-{\frac 1 2}}
\left(\left(\left(\begin{array}{cc}
0 & 0 \\
0 & 1
\end{array}\right)\otimes{\grave{E}}_{ss}
\right)\otimes
\left(\left(\begin{array}{cc}
0 & 0 \\
0 & 1
\end{array}\right)\otimes{\grave{E}}_{tt}\right)\right)
\Bigr\}.
\end{array} 
\end{equation}

Let
\begin{equation}
\label{eqn:DefcalRz}
{\mathcal{R}}(z)
=
{\mathcal{R}}_{\bullet\delta+\al_1}(q^{-1},q^{-1})
{\mathcal{R}}_{\bullet\delta}(q^{-1},q^{-1})
{\mathcal{R}}_{\bullet\delta+\al_0}(q^{-1},q^{-1})
{\mathcal{R}}_{{\mathfrak{h}}}.
\end{equation}

Using a general argument from the theory of power series in a single complex variable and that of absolutely convergent double series, we derive Theorem~\ref{theorem:MainVrepRMat} below from
Theorem~\ref{theorem:maintwo}
and Lemma~\ref{lemma:calRzero} and from the fact that $\rho$
can also be regarded as the representation of $\trmfkU_{\zeta,\omega}$ 
if $q$ is considered as that of \eqref{eqn:DefNewq} below.

\begin{theorem}\label{theorem:MainVrepRMat}
Let $q\in\bCt$
be such that
\begin{equation}
\label{eqn:DefNewq}
\mbox{$|q|\ne 1$,
or $q^{2m+1}=1$, $q^y\ne 1$
$(y\in\fkJ_{1,2m})$
for some $m\in\bN$.}
\end{equation}
Let $z,w\in\bCt$
be such that $|u|<1$
for all
\begin{equation*}
u\in\{z,w,zw,zq^2,wq^2,zwq^{-2},zq^{-2},wq^{-2},zwq^{-2}\}.
\end{equation*}  
Then we have the Yang-Baxter equation{\rm{:}}
\begin{equation}
{\mathcal{R}}(z)_{12}{\mathcal{R}}(zw)_{13}{\mathcal{R}}(w)_{23}
={\mathcal{R}}(w)_{23}{\mathcal{R}}(zw)_{13}{\mathcal{R}}(z)_{12},
\end{equation} where ${\mathcal{R}}(z)$ is defined 
by placing $q$, $z$
in the same letters of that of 
\eqref{eqn:DefcalRz}.
\end{theorem}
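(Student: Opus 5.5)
The plan is to prove the identity first for $q$ an odd-order root of unity, using the universal $R$-matrix of Theorem~\ref{theorem:maintwo}. It is then transported to all $q$ with $|q|\ne 1$ by comparing Taylor coefficients in $(z,w)$, which are rational functions of $q$.

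\textbf{Step 1: $\rho_\lambda$ descends to $\trmfkU_{\zeta,\omega}$.} Let $q^{2m+1}=1$ be primitive, and put $\zeta=q$ and $\omega=q^2a$, which is the $\omega$ of Section~\ref{section:GenVrep}. Then $\zeta,\omega$ are roots of unity, so $\dim\trmfkUo<\infty$ and Theorem~\ref{theorem:maintwo} applies. Each $\rho_\lambda$ factors through $\trU_\bA$ and hence through $\trmcU_{\zeta,\omega}$. To see that it factors through $\trmfkU=\trmcU/\trmcJ$, I will show that $\rho_\lambda$ kills $\trmcJp$ and $\trmcJm$ (the Cartan part $\trmcJo$ is treated the same way).
\begin{itemize}
\item By Theorems~\ref{theorem:valueDrinfeldpair} and \ref{theorem:trUthPBW}, $\trmcJp$ is spanned by the PBW monomials $B$ with ${\bar\Gamma}_{\zeta,\omega}(B)=0$.
\item Such a $B$ contains either a power $\trE_{x\delta+\al_i}^{m}$ with $(m)_{\zeta^2}!=0$, which forces $m\geq 2m+1\geq 3$, or a factor $\tE_{z\delta}$ with $\zeta^{4z}=1$.
\item By \eqref{eqn:rhEondelone} and \eqref{eqn:rhEondelzero}, $\rho(\trE_{n\delta+\al_i})^2=0$. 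By \eqref{eqn:rhtilEondel}, $\rho(\tE_{z\delta})$ carries the factor $q^{2z}-1$, and this vanishes when $q^{4z}=1$ with $q$ of odd order.
\end{itemize}
So every such $B$ is killed, and the same argument applies to $\trmcJm$.

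\textbf{Step 2: identify the image of $\trmfkR$.} I apply $\rho_\lambda\otimes\rho_\mu$ to the factorized form of $\hattrmfkC$ in Theorem~\ref{theorem:mainone}. An element of $\trU_\nu$ with $\nu=x\al_0+y\al_1$ picks up $\lambda^x$ on the left factor and $\mu^{-x}$ on the right factor, so only $z=\lambda/\mu$ appears. The three factors become $\trRuvdotdelalone$, $\trRuvdotdel$ and $\trRuvdotdelalzero$ evaluated at $u=v=q^{-1}$. Only the $k=1$ terms survive in the real-root factors, and the imaginary factor exponentiates by \eqref{eqn:pairnNODOTdelta}.

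By Lemma~\ref{lemma:calRzero}(1), applied to the weights of $\rho$ read off from \eqref{eqn:deftrrhoKOne}--\eqref{eqn:deftrrhoLZero}, $(\rho\otimes\rho)(\trmfkH)={\mathcal{R}}_{\mathfrak{h}}$. Here the half-integer powers of $q$ are interpreted through $\xi$. Convergence is absolute for the stated range of $z$: the real-root parts are geometric series in $zq^{\pm 2}$, and the imaginary part is an exponential of $\log$-type series. Applying $\rho_\lambda\otimes\rho_\mu\otimes\rho_\nu$ to \eqref{eqn:maintwoEqFive} and using continuity of $\rho$ on the completion, I obtain the Yang--Baxter equation with $z=\lambda/\mu$ and $w=\mu/\nu$, so that $zw=\lambda/\nu$.

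\textbf{Step 3: extend to $|q|\ne1$.} I expand both sides of the Yang--Baxter equation as absolutely convergent power series in $(z,w)$ on the polydisc given by the hypotheses. Each coefficient is a matrix of rational functions of $q^{1/2}$. The only denominators that occur are powers of $q$, of $q^2-1$, of $q^k+q^{-k}$ (from ${\mathcal A}$) and of $1-(zq)^\trkappa$-type factors. None of these vanishes at odd-order roots of unity. By Steps 1--2, each coefficient identity holds at infinitely many such $q$, so it holds identically as a rational identity. Summing the series then gives the equation for every admissible $q$.

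\textbf{Main obstacle.} I expect the hardest part to be Step 1 together with the Cartan bookkeeping in Step 2. This means checking that $\rho$ genuinely vanishes on all of $\trmcJ$, including $\trmcJo$ in $\trmfkUo$, and that the finite canonical element $\trmfkH$ maps exactly to ${\mathcal{R}}_{\mathfrak{h}}$ with the $\omega$-phases shown. I would first verify the case $\trkappa=1$ against the six-vertex $R$-matrix, and then redo it with the $\grave C,\grave D$ phases.
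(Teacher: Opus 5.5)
You follow the paper's own route. The paper likewise regards $\rho$ as a representation of $\trmfkU_{\zeta,\omega}$, reads off ${\mathcal{R}}(z)$ from Theorems~\ref{theorem:mainone} and \ref{theorem:maintwo} and Lemma~\ref{lemma:calRzero}, and then passes to general $q$ by a power-series argument. Your treatment of $\trmcJp$ and $\trmcJm$ is correct.

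\textbf{The gap.} Your claim that the Cartan part $\trmcJo$ is ``treated the same way'' is false, and Step~1 fails as stated for arbitrary odd-order $q$. Killing $\trmcJo$ is not a nilpotency statement. It requires the weights of $\rho$ to be trivial on the lattice $\{\lambda\,|\,\chi(\lambda,\al_0)=\chi(\lambda,\al_1)=1\}$, and whether this holds depends on how the order $x=2m+1$ of $q$ interacts with $N$.

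For example, let $q=\zeta$ have order $9$, let $N=3$ and $\omega=\zeta^3$.
\begin{itemize}
\item $\chi(5\al_0+2\al_1,\al_0)=\zeta^{6}\omega^{-2}=1$ and $\chi(5\al_0+2\al_1,\al_1)=\zeta^{-6}\omega^{5}=1$.
\item Hence $\trK_0^5\trK_1^2-1\in\trmcJogeq\subset\trmcJ$.
\item But the $(1,1)$-entry of $\rho(\trK_0^5\trK_1^2)$ is $(q^{-1}\omega)^5q^2=\zeta^{3}\neq 1$.
\end{itemize}
So $\rho$ does not descend to $\trmfkU_{\zeta,\omega}$ in this case. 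Separately, Lemma~\ref{lemma:calRzero}, which you need in order to identify $(\rho\otimes\rho)(\trmfkH)$ with ${\mathcal{R}}_{\mathfrak{h}}$, assumes $\gcd(x,N)=1$.

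\textbf{The repair.} Step~3 only needs infinitely many good specializations, so the fix is cheap.
\begin{itemize}
\item Run Steps~1--2 only for odd $x$ coprime to $N$, for example primes $x\nmid 2N$.
\item For such $x$, the radical lattice is generated by $N\al_0+N\al_1$ and $xN\al_1$, and the same holds on the $\trL$-side.
\item So it suffices to check $\rho(\trK_1)^{xN}=\rho(\trL_1)^{xN}=1$ and $\rho(\trK_0\trK_1)^N=\rho(\trL_0\trL_1)^N=1$.
\item These hold because $u=v=q^{-1}$ gives $\rho(\trK_0\trK_1)=\rho(\trL_0\trL_1)=\mathrm{diag}(\omega,1)\otimes{\grave{E}}$. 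This uses $uvq^2=1$; it is not a general fact about $\rho$.
\item The remaining roots of unity, those with $\gcd(x,N)>1$, are then covered by your rational-identity argument, just as $|q|\neq 1$ is.
\end{itemize}

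\textbf{A smaller point.} ``Continuity of $\rho$ on the completion'' is not available, because $\rho_\lambda$ is not continuous for the $\trmfkU[\geq k]$-topology. Keep $\lambda,\mu,\nu$ formal instead. In the $2N$-dimensional representation only homogeneous elements of $\trU^{\pm}$ of weight $\pm(c_0\al_0+c_1\al_1)$ with $|c_0-c_1|\le 1$ have nonzero image. Hence each coefficient of $z^jw^k$ receives contributions from only finitely many homogeneous components of the universal Yang--Baxter identity. That gives exactly the formal identity in $(z,w)$ that Step~3 needs.
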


\subsection{$a=-q^{-2}$ case} \label{section:minusonecase}

Let ${\mathcal{R}}(z)$ be of 
Theorem~\ref{theorem:MainVrepRMat}.

Assume $\trkappa=1$, i.e., $a=q^{-2}$
and $\omega=1$. Then $\trUqQ({\hat {sl_2}})$ is the usual $U_q({\hat {sl_2}})$, and 
${\mathcal{R}}(z)$
is
the $R$-matrix studied by \cite{LSS93}:
\begin{equation*}
\mbox{{\rm{(${\mathcal{R}}1$)}
}}
\quad\quad
\begin{array}{l}
{\mathcal{R}}(z)=q^{-{\frac 1 2}}{\mathcal{A}}(z)\cdot 
\Bigl({\hat{E}}_{1,1}
+{\frac {q(z-1)} {q^2z-1}}{\hat{E}}_{2,2}
+{\frac {q^2-1} {q^2z-1}}{\hat{E}}_{2,3} \\
\hspace{3cm} 
+{\frac {z(q^2-1)} {q^2z-1}}{\hat{E}}_{3,2}
+{\frac {q(z-1)} {q^2z-1}}{\hat{E}}_{3,3}
+{\hat{E}}_{4,4}
\Bigr),
\end{array}
\end{equation*} where ${\hat{E}}_{s,t}=E^{(4)}_{s,t}$
$(s,t\in\fkJ_{1,4})$.
Note that
$q^{{\frac 1 2}}{\mathcal{A}}(z)^{-1}{\mathcal{R}}(z)$
for ${\mathcal{R}}(z)$ of {\rm{(${\mathcal{R}}1$)}
}
is the standard $R$-matrix associated with the six-vertex model of physics.

From now on until Theorem~\ref
{theorem:YBERtwoz}, we assume $\trkappa=2$. So $a=-q^{-2}$ and $\omega=-1$.
Let ${\mathcal{B}}(z)={\frac  1 2}({\mathcal{A}}(z)+{\mathcal{A}}(-z))$
and ${\mathcal{C}}(z)={\frac  1 2}({\mathcal{A}}(z)-{\mathcal{A}}(-z))$.
Then ${\mathcal{R}}(z)$
is the following,
where ${\hat{E}}_{s,t}=E^{(16)}_{s,t}$
$(s,t\in\fkJ_{1,16})$.
\newline\newline
{\rm{(${\mathcal{R}}2$)}}
\newline
${\mathcal{R}}(z)=$ \newline
$q^{-{\frac 1 2}}\cdot\Bigl(\mathcal{B}(z) {\hat{E}}_{1,1}$
$-\mathcal{C}(z) {\hat{E}}_{1,6}$
$-\mathcal{B}(z) {\hat{E}}_{2,2}$
$+\mathcal{C}(z) {\hat{E}}_{2,5}$
$+\frac{q \left(q^2 z^2 \mathcal{B}(z)-q^2 z \mathcal{C}(z)+z
   \mathcal{C}(z)-\mathcal{B}(z)\right)}{\left(q^2 z-1\right) \left(q^2 z+1\right)} {\hat{E}}_{3,3}$
   \newline
$-\frac{q \left(q^2 z^2 \mathcal{C}(z)+q^2 (-z) \mathcal{B}(z)-\mathcal{C}(z)+z
   \mathcal{B}(z)\right)}{\left(q^2 z-1\right) \left(q^2 z+1\right)} {\hat{E}}_{3,8}$
$+\frac{(q-1) (q+1) \left(q^2 z \mathcal{C}(z)+\mathcal{B}(z)\right)}{\left(q^2 z-1\right)
   \left(q^2 z+1\right)} {\hat{E}}_{3,9}$
$+\frac{(q-1) (q+1) \left(q^2 z \mathcal{B}(z)+\mathcal{C}(z)\right)}{\left(q^2 z-1\right)
   \left(q^2 z+1\right)} {\hat{E}}_{3,14}$ \newline
$-\frac{q \left(q^2 z^2 \mathcal{B}(z)-q^2 z \mathcal{C}(z)+z
   \mathcal{C}(z)-\mathcal{B}(z)\right)}{\left(q^2 z-1\right) \left(q^2 z+1\right)} {\hat{E}}_{4,4}$ 
$+\frac{q \left(q^2 z^2 \mathcal{C}(z)+q^2 (-z) \mathcal{B}(z)-\mathcal{C}(z)+z
   \mathcal{B}(z)\right)}{\left(q^2 z-1\right) \left(q^2 z+1\right)} {\hat{E}}_{4,7}$
$+\frac{(q-1) (q+1) \left(q^2 z \mathcal{C}(z)+\mathcal{B}(z)\right)}{\left(q^2 z-1\right)
   \left(q^2 z+1\right)} {\hat{E}}_{4,10}$
$+\frac{(q-1) (q+1) \left(q^2 z \mathcal{B}(z)+\mathcal{C}(z)\right)}{\left(q^2 z-1\right)
   \left(q^2 z+1\right)} {\hat{E}}_{4,13}$
$+\mathcal{C}(z) {\hat{E}}_{5,2}$
$-\mathcal{B}(z) {\hat{E}}_{5,5}$
$-\mathcal{C}(z) {\hat{E}}_{6,1}$
$+\mathcal{B}(z) {\hat{E}}_{6,6}$ \newline
$-\frac{q \left(q^2 z^2 \mathcal{C}(z)+q^2 (-z) \mathcal{B}(z)-\mathcal{C}(z)+z
   \mathcal{B}(z)\right)}{\left(q^2 z-1\right) \left(q^2 z+1\right)} {\hat{E}}_{7,4}$
$+\frac{q \left(q^2 z^2 \mathcal{B}(z)-q^2 z \mathcal{C}(z)+z
   \mathcal{C}(z)-\mathcal{B}(z)\right)}{\left(q^2 z-1\right) \left(q^2 z+1\right)} {\hat{E}}_{7,7}$
$+\frac{(q-1) (q+1) \left(q^2 z \mathcal{B}(z)+\mathcal{C}(z)\right)}{\left(q^2 z-1\right)
   \left(q^2 z+1\right)} {\hat{E}}_{7,10}$
$+\frac{(q-1) (q+1) \left(q^2 z \mathcal{C}(z)+\mathcal{B}(z)\right)}{\left(q^2 z-1\right)
   \left(q^2 z+1\right)} {\hat{E}}_{7,13}$
$+\frac{q \left(q^2 z^2 \mathcal{C}(z)+q^2 (-z) \mathcal{B}(z)-\mathcal{C}(z)+z
   \mathcal{B}(z)\right)}{\left(q^2 z-1\right) \left(q^2 z+1\right)} {\hat{E}}_{8,3}$ 
$-\frac{q \left(q^2 z^2 \mathcal{B}(z)-q^2 z \mathcal{C}(z)+z
   \mathcal{C}(z)-\mathcal{B}(z)\right)}{\left(q^2 z-1\right) \left(q^2 z+1\right)} {\hat{E}}_{8,8}$
$+\frac{(q-1) (q+1) \left(q^2 z \mathcal{B}(z)+\mathcal{C}(z)\right)}{\left(q^2 z-1\right)
   \left(q^2 z+1\right)} {\hat{E}}_{8,9}$
$+\frac{(q-1) (q+1) \left(q^2 z \mathcal{C}(z)+\mathcal{B}(z)\right)}{\left(q^2 z-1\right)
   \left(q^2 z+1\right)} {\hat{E}}_{8,14}$ 
$+\frac{(q-1) (q+1) z \left(q^2 z \mathcal{B}(z)+\mathcal{C}(z)\right)}{\left(q^2 z-1\right)
   \left(q^2 z+1\right)} {\hat{E}}_{9,3}$ \newline
$-\frac{(q-1) (q+1) z \left(q^2 z \mathcal{C}(z)+\mathcal{B}(z)\right)}{\left(q^2 z-1\right)
\left(q^2 z+1\right)} {\hat{E}}_{9,8}$
$+\frac{q \left(q^2 z^2 \mathcal{B}(z)-q^2 z \mathcal{C}(z)+z
   \mathcal{C}(z)-\mathcal{B}(z)\right)}{\left(q^2 z-1\right) \left(q^2 z+1\right)} {\hat{E}}_{9,9}$
$+\frac{q \left(q^2 z^2 \mathcal{C}(z)+q^2 (-z) \mathcal{B}(z)-\mathcal{C}(z)+z
   \mathcal{B}(z)\right)}{\left(q^2 z-1\right) \left(q^2 z+1\right)} {\hat{E}}_{9,14}$
$-\frac{(q-1) (q+1) z \left(q^2 z \mathcal{B}(z)+\mathcal{C}(z)\right)}{\left(q^2 z-1\right)
   \left(q^2 z+1\right)} {\hat{E}}_{10,4}$
$+\frac{(q-1) (q+1) z \left(q^2 z \mathcal{C}(z)+\mathcal{B}(z)\right)}{\left(q^2 z-1\right)
   \left(q^2 z+1\right)} {\hat{E}}_{10,7}$
$+\frac{q \left(q^2 z^2 \mathcal{B}(z)-q^2 z \mathcal{C}(z)+z
   \mathcal{C}(z)-\mathcal{B}(z)\right)}{\left(q^2 z-1\right) \left(q^2 z+1\right)} {\hat{E}}_{10,10}$ \newline
$+\frac{q \left(q^2 z^2 \mathcal{C}(z)+q^2 (-z) \mathcal{B}(z)-\mathcal{C}(z)+z
   \mathcal{B}(z)\right)}{\left(q^2 z-1\right) \left(q^2 z+1\right)} {\hat{E}}_{10,13}$
$+\mathcal{B}(z) {\hat{E}}_{11,11}$
$+\mathcal{C}(z) {\hat{E}}_{11,16}$
$+\mathcal{B}(z) {\hat{E}}_{12,12}$
$+\mathcal{C}(z) {\hat{E}}_{12,15}$ \newline
$+\frac{(q-1) (q+1) z \left(q^2 z \mathcal{C}(z)+\mathcal{B}(z)\right)}{\left(q^2 z-1\right)
   \left(q^2 z+1\right)} {\hat{E}}_{13,4}$
$-\frac{(q-1) (q+1) z \left(q^2 z \mathcal{B}(z)+\mathcal{C}(z)\right)}{\left(q^2 z-1\right)
   \left(q^2 z+1\right)} {\hat{E}}_{13,7}$
$-\frac{q \left(q^2 z^2 \mathcal{C}(z)+q^2 (-z) \mathcal{B}(z)-\mathcal{C}(z)+z
   \mathcal{B}(z)\right)}{\left(q^2 z-1\right) \left(q^2 z+1\right)} {\hat{E}}_{13,10}$
$-\frac{q \left(q^2 z^2 \mathcal{B}(z)-q^2 z \mathcal{C}(z)+z
   \mathcal{C}(z)-\mathcal{B}(z)\right)}{\left(q^2 z-1\right) \left(q^2 z+1\right)} {\hat{E}}_{13,13}$
$-\frac{(q-1) (q+1) z \left(q^2 z \mathcal{C}(z)+\mathcal{B}(z)\right)}{\left(q^2 z-1\right)
   \left(q^2 z+1\right)} {\hat{E}}_{14,3}$
$+\frac{(q-1) (q+1) z \left(q^2 z \mathcal{B}(z)+\mathcal{C}(z)\right)}{\left(q^2 z-1\right)
   \left(q^2 z+1\right)} {\hat{E}}_{14,8}$ \newline
$-\frac{q \left(q^2 z^2 \mathcal{C}(z)+q^2 (-z) \mathcal{B}(z)-\mathcal{C}(z)+z
   \mathcal{B}(z)\right)}{\left(q^2 z-1\right) \left(q^2 z+1\right)} {\hat{E}}_{14,9}$
$-\frac{q \left(q^2 z^2 \mathcal{B}(z)-q^2 z \mathcal{C}(z)+z
   \mathcal{C}(z)-\mathcal{B}(z)\right)}{\left(q^2 z-1\right) \left(q^2 z+1\right)} {\hat{E}}_{14,14}$
$+\mathcal{C}(z) {\hat{E}}_{15,12}$
$+\mathcal{B}(z) {\hat{E}}_{15,15}$
$+\mathcal{C}(z) {\hat{E}}_{16,11}$
$+\mathcal{B}(z) {\hat{E}}_{16,16}\Bigr)$.

\vspace{1cm}

For $b, c\in\bC$,
let ${\mathcal{R}}(z;b,c)$ be the one
obtained from $q^{{\frac 1 2}}\left(q^2 z-1\right) \left(q^2 z+1\right)\cdot{\mathcal{R}}(z)$ 
by replacing $\mathcal{B}(z)$
and $\mathcal{C}(z)$ by $b$ and $c$, respectively.
We consider $q$ and $z$ in ${\mathcal{R}}(z;b,c)$ as arbitrary elements of $\bC$.
By a calculation by Mathematica~14.3~\cite{Wolf14-3}, we conclude:
\begin{theorem} 
\label{theorem:YBERtwoz}
We have{\rm{:}}
\begin{equation*}
\begin{array}{l}
{\mathcal{R}}(z;b_1,c_1)_{12}{\mathcal{R}}(zw;b_2,c_2)_{13} 
{\mathcal{R}}(w;b_3,c_3)_{23} \\
\quad ={\mathcal{R}}(w;b_3,c_3)_{23}{\mathcal{R}}(zw;b_2,c_2)_{13}
{\mathcal{R}}(z;b_1,c_1)_{12}
\end{array}
\end{equation*}
for all $b_t,c_t\in\bC$ $(t\in\fkJ_{1,3})$
and all $q$, $z$, $w\in\bC$.

\end{theorem}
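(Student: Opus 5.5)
The plan is to reduce the identity to a finite, structured polynomial check rather than a blind $4096$-entry verification, using the block form of ${\mathcal{R}}(z;b,c)$ visible in (${\mathcal{R}}2$).

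\textbf{Step 1: split into $b$-part and $c$-part.} Every entry of ${\mathcal{R}}(z;b,c)$ is linear in $(b,c)$, so we can write
\[
{\mathcal{R}}(z;b,c)=b\,M(z)+c\,N(z),
\]
where $M(z)$ and $N(z)$ are $16\times16$ matrices whose entries are polynomials in $q,z$. Both sides of the claimed equation are trilinear in $(b_1,c_1),(b_2,c_2),(b_3,c_3)$. Hence it suffices to check the eight equations obtained by choosing $M$ or $N$ in each of the three slots, for example
\[
M(z)_{12}N(zw)_{13}M(w)_{23}=M(w)_{23}N(zw)_{13}M(z)_{12}.
\]

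\textbf{Step 2: factor each matrix.} I would use the tensor structure $V=\bC^2\otimes\bC^2$ (spin $\otimes$ the $\trkappa=2$ cyclic label), inherited from the construction via $\grave{C}$ and $\grave{D}$. I expect $N(z)$ to equal $M(z)$ composed with $(\grave{C}\otimes \grave{C}^{-1})$ twisted by $\grave{D}$-type signs, up to the $z$-shift $\mathcal{A}(-z)$ versus $\mathcal{A}(z)$. This is suggested by the decomposition $\mathcal{B}\pm\mathcal{C}=\mathcal{A}(\pm z)$: we have $b\,M+c\,N=\tfrac{b+c}{2}(M+N)+\tfrac{b-c}{2}(M-N)$. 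Here $M\pm N$ should be the contributions from $\mathcal{A}(\pm z)$, namely scalar multiples of conjugates of a six-vertex-type matrix evaluated at $\pm z$ by a fixed operator $G\otimes G$ built from $\grave{P}$. Concretely, after conjugating by $\acute{P}\otimes\acute{P}$ (as in the derivation of ${\mathcal{R}}_{\bullet\delta}$), I expect each of $M\pm N$ to be block diagonal in the $\grave{E}_{ss}\otimes\grave{E}_{tt}$ sectors. On each sector it should be a six-vertex matrix with spectral parameter $\pm z\,\omega^{s-t}$ times a constant diagonal ``twist'' coming from ${\mathcal{R}}_{\mathfrak h}$.

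\textbf{Step 3: verify Yang--Baxter sector by sector.} Once the factorization is in hand, each of the eight products reduces to a Yang--Baxter equation for twisted six-vertex matrices with parameters $\pm z,\pm zw,\pm w$. The sign combinations are consistent because the sign attached to slot $13$ is the product of the signs for slots $12$ and $23$, as forced by the $\omega^{s-t}$ grading with $\omega=-1$. The six-vertex Yang--Baxter equation is classical, and the Reshetikhin-type diagonal twist preserves it because the twist commutes with the weight grading (Section~\ref{section:DiReshTwist}). If the independence of the sign choices fails to match in some slot, the fallback is direct symbolic expansion: all entries are polynomials in $q,z,w$ times monomials in the $b_t,c_t$, so the identity is a finite polynomial identity. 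It can be checked coefficientwise, as the paper does with Mathematica. Sparsity helps here, since each row of (${\mathcal{R}}2$) has at most four nonzero entries.

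\textbf{Main obstacle.} The hardest step is establishing the Step 2 factorization exactly, including signs, since the entries of (${\mathcal{R}}2$) mix $\mathcal{B}$ and $\mathcal{C}$ intricately. I would first test it on the $\hat E_{1,1},\hat E_{1,6},\hat E_{2,2},\hat E_{2,5}$ entries, where $M\pm N$ visibly gives $\pm\mathcal{A}(\pm z)$-type diagonal blocks. If the factorization holds, the theorem follows conceptually; otherwise it rests on the computer verification.
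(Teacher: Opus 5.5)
Your Step~1 is fine: ${\mathcal R}(z;b,c)$ is linear in $(b,c)$, so the theorem is equivalent to eight coefficient identities. The gap is Step~2. The factorization you expect is false, and this already shows on the entries you planned to test first.

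\emph{Both spins up.} On the indices $1,2,5,6$ of (${\mathcal R}2$) (both spins up; the second factor of $\mathbb{C}^2\otimes\mathbb{C}^2$ is the cyclic label), in the order $(1,2,5,6)$ one reads off
${\mathcal R}(z;b,c)=(q^4z^2-1)(b\,\sigma_z\otimes\sigma_z+c\,\sigma_y\otimes\sigma_y)$.
Here $\sigma_x,\sigma_y,\sigma_z$ are the Pauli matrices in the label variables. For $N=2$, $\grave P$ is $\sqrt2$ times the Hadamard matrix. Conjugating by $\acute P\otimes\acute P$ therefore gives $(q^4z^2-1)(b\,\sigma_x\otimes\sigma_x+c\,\sigma_y\otimes\sigma_y)$, which has zero diagonal:
\begin{itemize}
\item the ${\mathcal A}(z)$-coefficient $b+c$ swaps the sectors $\grave E_{11}\otimes\grave E_{22}$ and $\grave E_{22}\otimes\grave E_{11}$;
\item the ${\mathcal A}(-z)$-coefficient $b-c$ swaps $\grave E_{11}\otimes\grave E_{11}$ and $\grave E_{22}\otimes\grave E_{22}$.
\end{itemize}
On this sector the eigenvectors of $M\pm N$ with nonzero (simple) eigenvalues are Bell states. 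So no label change of basis of the form $G\otimes G$ makes them sector-diagonal.

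\emph{Mixed spins.} Take $b=c=1$, i.e.\ your $M+N$. Its restriction to the indices $\{3,8,9,14\}$ is $(q^2z+1)$ times a nonzero rank-two matrix whose image equals its kernel, namely the span of $\hat e_3+\hat e_8$ and $\hat e_9-\hat e_{14}$. The block $\{4,7,10,13\}$ behaves the same way. So $M+N$ is nilpotent on the mixed-spin subspace, which your conjugation preserves. A direct sum of generic six-vertex blocks times invertible diagonal twists is never nilpotent. In fact the ${\mathcal A}(z)$- and ${\mathcal A}(-z)$-parts map a pair of complementary $2$-planes into each other rather than living on separate label sectors. Step~3 therefore has nothing to act on.

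Two further problems:
\begin{itemize}
\item Your appeal to Section~\ref{section:DiReshTwist} runs against the paper. Its last remark there states that ${\mathcal R}(z)$ of (${\mathcal R}2$) cannot be obtained from the universal $R$-matrix of $U_q(\hat{sl_2})$ via ${\mathfrak R}^{(G)}$.
\item Your sign-consistency remark covers only the four triples with $\epsilon_{13}=\epsilon_{12}\epsilon_{23}$. Arbitrary $b_t,c_t$ require all eight identities, so the other four would need their own argument.
\end{itemize}

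What actually proves the theorem in your proposal is the fallback: a finite symbolic check of a polynomial identity in $q,z,w,b_t,c_t$. That is exactly the paper's proof, a Mathematica~14.3 computation, and your Step~1 is only a bookkeeping form of it. Present the argument as that verification and drop Steps~2--3. Any conceptual proof would need a structural decomposition different from a sector-wise six-vertex one, which the square-zero pieces above rule out.
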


\section{Drinfeld-Reshetikhin twisting} \label{section:DiReshTwist}
Let $M\in\bN$ and let ${\tilde{\omega}}\in\bCt$
be such that ${\tilde{\omega}}^M=1$
and ${\tilde{\omega}}^m\ne 1$ $(m\in\fkJ_{1,M-1})$, i.e.,
${\tilde{\omega}}$ is a primitive $m$-th root of unity.
Let ${\tilde{\omega}}_{00}={\tilde{\omega}}_{11}=1$
and ${\tilde{\omega}}_{01}={\tilde{\omega}}$, ${\tilde{\omega}}_{10}={\tilde{\omega}^{-1}}$. 
Let ${\mathcal{M}}$
be the associative unital $\bC$-algebra
defined by the generators 
$\sigma_i$, $\tau_i$ 
$(i\in I)$
and
the relations $\sigma_i^M=\tau_i^M=1$,
$\sigma_i\sigma_j=\sigma_j\sigma_i$,  
$\tau_i\tau_j=\tau_j\tau_i$, 
$\sigma_i\tau_j=\tau_j\sigma_i$
$(i, j\in I)$.
Then $\dim\mcM=M^4$
and $\{\sigma_0^{x_0}\sigma_1^{x_1}
\tau_0^{y_0}\tau_1^{y_1}
\,|\,x_0,x_1,y_0,y_1\in\fkJ_{1,M-1}\}$
form the $\bC$-basis of $\mcM$.
Regard $\mcM$ as the Hopf algebra by
\begin{equation*}
\Delta(\sigma_i)=\sigma_i\otimes\sigma_i,\,
\Delta(\tau_i)=\tau_i\otimes\tau_i,\,
\varepsilon(\sigma_i)=\varepsilon(\tau_i)=1,\,
S(\sigma_i)=\sigma_i^{-1},\,S(\tau_i)=\tau_i^{-1}
\, (i\in I).
\end{equation*}

Let $\mcMtrU=\trU\otimes\mcM$,
and regard $\mcMtrU$ as the 
Hopf algebra such that
$\trU$ and $\mcM$ and the Hopf subalgebra of $\mcMtrU$
and the following equations hold:
\begin{equation*}
\sigma_iX\sigma_i^{-1}={\tilde{\omega}}_{i0}^x{\tilde{\omega}}_{i1}^yX,
\,\, \tau_iX\tau_i^{-1}={\tilde{\omega}}_{0i}^{-x}{\tilde{\omega}}_{1i}^{-y}X
\quad (i\in I,\,x,y\in\bZ,\,X\in\trU_{x\al_0+y\al_1}).
\end{equation*}
Let 
\begin{equation*}
G={\frac 1 {M^2}}\sum_{k,r,x,y\in\fkJ_{1,M-1}}
{\tilde{\omega}}^{-ky+rx}\sigma_0^k\sigma_1^r
\otimes \tau_0^x\tau_1^y
\quad (\in\mcMtrU\otimes\mcMtrU).
\end{equation*}
Then $G$ is the universal $R$-matrix of $\mcM$.
We have  \begin{equation*}
G^{-1}={\frac 1 {M^2}}\sum_{k,r,x,y\in\fkJ_{1,M-1}}
{\tilde{\omega}}^{-ky+rx}\sigma_0^k\sigma_1^r
\otimes 
S(\tau_0^x\tau_1^y)={\frac 1 {M^2}}\sum_{k,r,x,y\in\fkJ_{1,M-1}}
{\tilde{\omega}}^{ky-rx}\sigma_0^k\sigma_1^r
\otimes 
\tau_0^x\tau_1^y.
\end{equation*}
Applying the Drinfeld-twist induced  by $G=\sum_ku_k\otimes v_k$, we obtain an alternative Hopf algebra structure $\mcMtrU^{(G)}=(\mcMtrU,\Delta^{(G)},
\Delta^{(G)},S^{(G)})$ on $\mcMtrU$
(see \cite{R90} (see also \cite{D90})).
It is defined by
\begin{equation*}
\Delta^{(G)}(X)=G\Delta(X)G^{-1},\,
\varepsilon^{(G)}(X)=\varepsilon(X),\,
S^{(G)}(X)=\sum_{k,r}u_kS(v_k)S(X)S^2(u_r)v_r
\quad (X\in\trU).
\end{equation*}
We call $\mcMtrU^{(G)}$ the {\it{twisting of 
$\mcMtrU$ by $G$}}.

\begin{lemma}
Let $Q^\prime$ be the one obtained from $Q$
of \eqref{eqn:defQ}
by replacing $\ha$ by $\ha{\tilde{\omega}}^{-2}$.
Then there exists a unique
Hopf algebra monomorphism
$\varphi:\trU_{q,Q^\prime}\to\mcMtrU^{(G)}$ such that
\begin{equation*}
\varphi(\trK_i)=\trK_i\sigma_i^{-1}\tau_i^{-1},\,
\varphi(\trL_i)=\trL_i\sigma_i^{-1}\tau_i^{-1},\,
\varphi(\trE_i)=\trE_i\tau_i^{-1},\,
\varphi(\trF_i)=\sigma_i^{-1}\trF_i
\,\,(i\in I).
\end{equation*}
\end{lemma}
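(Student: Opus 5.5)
The plan is to define $\varphi$ on the generators by the stated formulas and proceed in three steps. First, check that $\varphi$ respects the defining relations of $\trU_{q,Q^\prime}$, which gives an algebra homomorphism. Second, check that it intertwines $\Delta_{Q^\prime}$ with $\Delta^{(G)}$, and likewise $\varepsilon$ and $S$. Third, prove injectivity. Uniqueness is automatic, since the generators are prescribed.

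For the relations, the $\trK$'s, $\trL$'s, $\sigma$'s and $\tau$'s are pairwise commuting invertible elements, so the first relation is clear. By the defining rules of $\mcMtrU$, for $X\in\trU_{\pm\al_j}$ we have
\[
\sigma_iX\sigma_i^{-1}={\tilde{\omega}}_{ij}^{\pm1}X,\qquad \tau_iX\tau_i^{-1}={\tilde{\omega}}_{ji}^{\mp1}X .
\]
Hence conjugation by $\sigma_i^{-1}\tau_i^{-1}$ multiplies $\trE_j$ by ${\tilde{\omega}}_{ij}^{-1}{\tilde{\omega}}_{ji}$. This factor is $1$ for $i=j$, ${\tilde{\omega}}^{-2}$ for $(i,j)=(0,1)$ and ${\tilde{\omega}}^{2}$ for $(i,j)=(1,0)$. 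These are exactly the ratios $q^\prime_{ij}/q_{ij}$, because $\ha$ is replaced by $\ha{\tilde{\omega}}^{-2}$ in $q_{01}$ and correspondingly $q_{10}=q^{-4}\ha^{-1}$ picks up ${\tilde{\omega}}^{2}$. The same computation handles $\trF_j$ and $\trL_i$.

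For $[\varphi(\trE_i),\varphi(\trF_j)]$, I will move all group-likes to the outside, writing both products in the form $\sigma_j^{-1}(\cdots)\tau_i^{-1}$. For $i=j$ no scalar appears, since ${\tilde{\omega}}_{ii}=1$, and the result is $\sigma_i^{-1}(-\trK_i+\trL_i)\tau_i^{-1}=\varphi(-\trK_i+\trL_i)$. For $i\ne j$ I must check that the two orderings pick up the same scalar, so that the commutator is a multiple of $[\trE_i,\trF_j]=0$; this is a short bookkeeping of ${\tilde{\omega}}$-powers. For the Serre relations, a monomial $\varphi(\trE_{i_1})\cdots\varphi(\trE_{i_k})$ equals $c\,\trE_{i_1}\cdots\trE_{i_k}\tau_{i_1}^{-1}\cdots\tau_{i_k}^{-1}$ for a scalar $c$ that is an explicit product of ${\tilde{\omega}}_{\cdot\cdot}$'s. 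Inserting this into \eqref{eqn:defEiEj} for $Q^\prime$, the scalars convert the coefficients $q^\prime_{ij}$ into $q_{ij}$, so the image is the Serre relation for $Q$ times a group-like. The relation \eqref{eqn:defFiFj} is handled in the same way.

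For the coproduct, group-likes are clearly fine, since $G$ commutes with $\Delta(\mcM)$ and with elements of total degree $0$. The key computation is $G(1\otimes\trE_i)G^{-1}$ and $G(\trE_i\otimes1)G^{-1}$. Commuting $\trE_i$ past $\tau_0^x\tau_1^y$ produces a character value ${\tilde{\omega}}^{\pm x}$ or ${\tilde{\omega}}^{\pm y}$. Using the Fourier form of $G$, a shift of the summation index turns this character into multiplication of the other tensor factor by a single $\sigma_k^{\pm1}$. I therefore expect
\[
G(\trK_i\tau_i^{-1}\otimes\trE_i\tau_i^{-1})G^{-1}=\trK_i\sigma_i^{-1}\tau_i^{-1}\otimes\trE_i\tau_i^{-1},\qquad G(\trE_i\tau_i^{-1}\otimes\tau_i^{-1})G^{-1}=\trE_i\tau_i^{-1}\otimes1 .
\]
Together these give $\Delta^{(G)}(\varphi(\trE_i))=(\varphi\otimes\varphi)(\Delta(\trE_i))$, and $\trF_i$ is treated symmetrically. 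This index-shifting computation, getting the exact group-like right, is the main obstacle, and I would first carry it out for a general homogeneous $X\in\trU_{x\al_0+y\al_1}$. The counit is immediate, and compatibility with the antipode follows because a bialgebra map between Hopf algebras preserves antipodes.

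For injectivity, I will use the decomposition $\trU^-\otimes\trU^0\otimes\trU^+$ from {\rm{(Ba2)}} for $Q^\prime$, together with $\mcMtrU=\trU\otimes\mcM$. A basis element $Y\trK_\lambda\trL_\mu X$, with $Y,X$ homogeneous root-vector monomials, maps to $c\,Y\trK_\lambda\trL_\mu X\otimes g$ with $c\ne0$ and $g$ a group-like in $\mcM$. Since $Y\trK_\lambda\trL_\mu X$ runs through a basis of $\trU$, these images are linearly independent. The point to check is that the correspondence $\trU_{q,Q^\prime}^{\pm}\to\trU^{\pm}$, $\trE_{i_1}\cdots\trE_{i_k}\mapsto c\,\trE_{i_1}\cdots\trE_{i_k}$, is well defined and bijective on each weight space. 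This follows from {\rm{(Ba4)}}, because the Serre relations correspond under rescaling as shown above.
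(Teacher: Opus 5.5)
The paper states this lemma without proof (it only cites \cite{R90} for the twisting), so there is no argument in the paper to compare with. Your direct verification is correct. Your two expected identities are instances of the general rule
\[
G(X\otimes Y)G^{-1}=X\sigma_\mu^{-1}\otimes\tau_\lambda Y\qquad(X\in\trU_\lambda,\ Y\in\trU_\mu),
\]
where $\sigma_\lambda=\sigma_0^{x}\sigma_1^{y}$ and $\tau_\lambda=\tau_0^{x}\tau_1^{y}$ for $\lambda=x\al_0+y\al_1$. This rule gives $\Delta^{(G)}(\varphi(\trE_i))=\varphi(\trE_i)\otimes 1+\varphi(\trK_i)\otimes\varphi(\trE_i)$ and $\Delta^{(G)}(\varphi(\trF_i))=\varphi(\trF_i)\otimes\varphi(\trL_i)+1\otimes\varphi(\trF_i)$, exactly as you predicted. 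Your bookkeeping for the relations is also right:
\begin{itemize}
\item One gets $[\varphi(\trE_i),\varphi(\trF_j)]=\sigma_j^{-1}[\trE_i,\trF_j]\tau_i^{-1}$, with no scalar at all.
\item In the Serre relations, the monomial $\trE_i^{3-k}\trE_j\trE_i^{k}$ picks up the scalar $\tilde\omega_{ij}^{-3+2k}$. This turns $q^\prime_{ij}=q_{ij}\tilde\omega_{ij}^{-2}$ back into $q_{ij}$, up to an overall factor $\tilde\omega_{ij}^{-3}$.
\end{itemize}
The antipode argument (a bialgebra map between Hopf algebras preserves antipodes) and the injectivity argument via the triangular decomposition are both fine.

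There are three small points to fix.
\begin{itemize}
\item The index shift needs the sums defining $G$, and the basis of $\mcM$, to run over a full residue system modulo $M$, e.g.\ $\fkJ_{0,M-1}$. The printed range $\fkJ_{1,M-1}$ is a misprint.
\item Your claim that $G$ commutes with all elements of total degree $0$ is false as stated. By the rule above, $G(X\otimes Y)G^{-1}=X\sigma_\lambda\otimes\tau_\lambda Y$ for $X\in\trU_\lambda$, $Y\in\trU_{-\lambda}$. What you need, and actually use, is that $G$ commutes with $\trU_0\otimes\trU_0$ and with $\mcM\otimes\mcM$.
\item In the injectivity step, the paper records {\rm{(Ba2)}} and {\rm{(Ba4)}} only under the standing assumption that $q$ is not a root of unity. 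They still hold here for every $q\in\bCt$, since the cubic Serre elements are skew-primitive because $q_{ii}^2q_{ij}q_{ji}=1$. Alternatively, running your same construction with $\tilde\omega^{-1}$ in place of $\tilde\omega$ gives the inverse of your rescaling map $\trU^{\pm}_{q,Q^\prime}\to\trU^{\pm}_{q,Q}$ directly.
\end{itemize}
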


\begin{remark}
Define ${\mathcal{M}}\trmfkU^{(G)}$ in the same way as $\mcMtrU^{(G)}$, with $\trmfkU$ in place of $\trU$.
Let $\trmfkR^{(G)}=G_{21}\cdot \trmfkR\cdot G^{-1}$.
By \cite{R90}, $\trmfkR^{(G)}$ satisfies equations (for $({\mathcal{M}}\trmfkU^{(G)}, {\hat{\Delta}}^{(G)})$) similar
to those of 
\eqref{eqn:maintwoEqThree}-\eqref{eqn:maintwoEqFive}.
\end{remark}

\begin{remark}
Assume $a=-q^2$ and ${\tilde{\omega}}={\sqrt{-1}}$.
Then $\rho$ for $N=2$ (see Section~\ref{section:minusonecase})
cannot be extended to the algebra homomorphism
from $\mcMtrU^{(G)}$.
In particular, ${\mathcal{R}}(z)$ of {\rm{(${\mathcal{R}}2$)}}
cannot be obtained from the universal $R$-matrix of $U_q(\hat{sl_2})$ using $\trmfkR^{(G)}$.
\end{remark}

\section{$\hbar$-version} \label{section:hadic}
Let $\bC[[\hbar]]$ be the
$\bC$-algebra of formal power series in 
$\hbar$.
Let $\bC((\hbar))$ be the field
of the formal Laurent series in 
$\hbar$, that is, $\bC((\hbar))$ can be considered as the field of fractions of $\bC[[\hbar]]$.

Let $z\in\bCt$ and 
\begin{equation} \label{eqn:Pmatrix}
P=(p_{ij})_{i,j\in I}=\left(\begin{array}{ccc} 
2 & -2+z  \\
-2-z & 2 
\end{array}\right)
\in{\mathrm{M}}_2(\bC).
\end{equation}
If $z\ne 0$,
Let ${\tilde{P}}=({\tilde{p}}_{ij})_{i,j\in
I}=P^{-1}={\frac 1 {z^2}}
\left(\begin{array}{ccc} 
2 & 2-z  \\
2+z & 2 
\end{array}\right)$.
Let ${\breve{q}}_{ij}=\exp(p_{ij}\hbar)$
$(i,j\in I)$.
We use the $\hbar$-adic topology for a $\bC[[\hbar]]$-algebra.
Let $U^{{\mathfrak{h}}}_\hbar=U^{{\mathfrak{h}}}_{\hbar,P}({\hat {sl_2}})$ be
the topological $\bC[[\hbar]]$-algebra 
topologically presented by the generators 
${\breve{H}}_i$, ${\breve{X}}_i$, 
${\breve{E}}_i$, ${\breve{F}}_i$ $(i\in I)$
and the relations (c.f. \cite{GG25}):
\begin{align}
& \left\{\begin{array}{ll}
-z{\breve{X}}_0=(4+z){\breve{H}}_0+4{\breve{H}}_1,\,\,
z{\breve{X}}_1=4{\breve{H}}_0+(4-z){\breve{H}}_1 
& \quad (\mbox{if $z\ne 0$}), \\
{\breve{H}}_0={\breve{X}}_0
=-{\breve{H}}_1=-{\breve{X}}_1
& \quad (\mbox{if $z=0$})),
\end{array}\right.
\\
& [{\breve{H}}_i,{\breve{H}}_j]=0,\,
[{\breve{H}}_i,{\breve{E}}_j]=p_{ij}{\breve{E}}_j,\,
[{\breve{H}}_i,{\breve{F}}_j]=-p_{ij}{\breve{F}}_j,\\
& [{\breve{E}}_i,{\breve{F}}_j]=
\delta_{ij}{\frac {\exp(\hbar{\breve{H}}_i)-\exp(-\hbar{\breve{X}}_i)} {\exp(\hbar)-\exp(-\hbar)}}, \\
&{\breve{E}}_i^3{\breve{E}}_j-{\breve{q}}_{ij}(3)_{{\breve{q}}_{ii}}{\breve{E}}_i^2{\breve{E}}_j{\breve{E}}_i+{\breve{q}}_{ii}{\breve{q}}_{ij}^2(3)_{{\breve{q}}_{ii}}{\breve{E}}_i{\breve{E}}_j{\breve{E}}_i^2-{\breve{q}}_{ii}^3{\breve{q}}_{ij}^3{\breve{E}}_j{\breve{E}}_i^3=0
\quad (i\ne j), \label{eqn:hdefEiEj} \\
&{\breve{F}}_i^3{\breve{F}}_j-{\breve{q}}_{ji}(3)_{{\breve{q}}_{ii}}{\breve{F}}_i^2{\breve{F}}_j{\breve{F}}_i+{\breve{q}}_{ii}{\breve{q}}_{ji}^2(3)_{{\breve{q}}_{ii}}{\breve{F}}_i{\breve{F}}_j{\breve{F}}_i^2-{\breve{q}}_{ii}^3{\breve{q}}_{ji}^3{\breve{F}}_j{\breve{F}}_i^3=0
\quad (i\ne j). \label{eqn:hdefFiFj} 
\end{align}
We regard  $U^{{\mathfrak{h}}}_\hbar$ as the topological
Hopf algebra with
$\Delta({\breve{H}}_i)={\breve{H}}_i\otimes 1+1\otimes {\breve{H}}_i$, 
$\Delta({\breve{E}}_i)={\breve{E}}_i\otimes 1
+\exp(\hbar{\breve{H}}_i)\otimes {\breve{E}}_i$, 
$\Delta({\breve{F}}_i)={\breve{F}}_i\otimes \exp(-\hbar{\breve{H}}_i)
+1\otimes {\breve{F}}_i$ $(i\in I)$.
Let ${\breve{q}}=\exp(\hbar)\in\bC((\hbar))$
and 
${\breve{Q}}
=({\breve{q}}_{ij})_{ij\in I}$
$(\in{\mathrm{M}}_2(\bC((\hbar))))$.
Let ${\breve{U}}={\breve{U}}_{{\breve{q}},{\breve{Q}}}({\hat {sl_2}})$ be the Hopf $\bC((\hbar))$-algebra defined in the same way as that for 
$\trUqQ({\hat {sl_2}})$
with $\bC((\hbar))$, 
${\breve{q}}$, 
${\breve{Q}}$
in place of 
$\bC$, 
$q$, 
$Q$.
Then we have the $\bC((\hbar))$-algebra
homomorphism 
${\breve{\varphi}}:{\breve{U}}\to U^{{\mathfrak{h}}}_\hbar$
with 
${\breve{\varphi}}(K_i)=
\exp(\hbar{\breve{H}}_i)$, 
${\breve{\varphi}}(L_i)=
\exp(-\hbar{\breve{X}}_i)$, 
${\breve{\varphi}}(E_i)=
\exp(\hbar{\breve{E}}_i)$, 
${\breve{\varphi}}(F_i)=
-(\exp(\hbar)-\exp(-\hbar))\exp(\hbar{\breve{F}}_i)$
$(i\in I)$.
Let $U^{{\mathfrak{h}}}_\hbar{\bar{\otimes}}U^{{\mathfrak{h}}}_\hbar$ be the topological completion of 
$U^{{\mathfrak{h}}}_\hbar\otimes_{\bC[[\hbar]]}U^{{\mathfrak{h}}}_\hbar$, 
and let 
$U^{{\mathfrak{h}}}_\hbar{\tilde{\otimes}}U^{{\mathfrak{h}}}_\hbar$ be the completion 
of $U^{{\mathfrak{h}}}_\hbar{\bar{\otimes}}U^{{\mathfrak{h}}}_\hbar$
similar to \eqref{eqqn;multicomp}.
Let $t_0$ be $\sum_{i,j\in I}{\tilde{p}}_{ij}{\breve{H_i}}\otimes{\breve{H_i}}$
(resp. ${\frac 1 2}{\breve{H}}_1\otimes {\breve{H}}_1$)
if $z\ne 0$
(resp. $z= 0$).
Let
\begin{equation*}
\trmfkR^{{\mathfrak{h}}}=
\left(\sum_{\lambda\in\bZgeqo\Pi} ({\breve{\varphi}}\otimes{\breve{\varphi}})(\trmfkC_\lambda)\right)
\cdot\exp(-\hbar t_0)
\quad (\in U^{{\mathfrak{h}}}_\hbar{\tilde{\otimes}}U^{{\mathfrak{h}}}_\hbar).
\end{equation*}
Then $\trmfkR^{{\mathfrak{h}}}$
is the universal $R$-matrix of $U^{{\mathfrak{h}}}$,
which means that it satisfies the equations similar to
those of Theorem~\ref{theorem:maintwo}.

\vspace{1cm}
\noindent
{\bf{Acknowledgments}}\,
Regarding this work,
H. Yamane was partially supported by JSPS Grant-in-Aid for Scientific Research
(C) 25K06931. We also used Mathematica 14.3 \cite{Wolf14-3} and its earlier versions.

\vspace{1cm}
\noindent
{\bf{Data Availability}}\,
No datasets were generated or analyzed during this work.

\vspace{1cm}
\noindent
{\bf{Declarations}}

\vspace{1cm}
\noindent
{\bf{Conflict of interest}}\,
The authors declare that they have no conflict of interest.

\end{document}